\numberwithin{equation}{section}
\newtheorem{theorem}{Theorem}[section]
\newtheorem{lemma}{Lemma}[section]
\newtheorem{proposition}{Proposition}[section]
\newtheorem{definition}{Definition}[section]
\newtheorem{remark}{Remark}[section]
\date{}
\begin{document}
	\title{\bf Propagation Phenomena for A Reaction-Advection-Diffusion Competition Model in A Periodic Habitat}
	
	\author{Xiao Yu and Xiao-Qiang Zhao \thanks{Research supported in part by the NSERC of
				Canada.\,  Corresponding author.}\\
		Department of Mathematics and Statistics\\
		Memorial University of Newfoundland\\
		St. John's, NL A1C  5S7, Canada\\
		E-mail:\, xy3267@mun.ca \, \,  zhao@mun.ca}
	\maketitle
	\baselineskip 0.24in
	\noindent {\bf Abstract.}  This paper is devoted to the study of propagation phenomena for a Lotka-Volterra reaction-advection-diffusion competition model in a periodic habitat. We first investigate the global attractivity of a semi-trival steady state for the periodic initial value problem. Then we establish the existence of the rightward spreading speed and its coincidence with the minimal wave speed for spatially periodic rightward traveling waves. We also obtain a set of sufficient conditions for the rightward spreading speed to be linearly determinate. Finally, we apply the obtained results to a prototypical reaction-diffusion model.
	
	\smallskip
	
	\noindent {\bf Key words:} Lotka-Volterra model, periodic habitat, spatially periodic
	traveling waves, spreading speeds, linear determinacy.

	\smallskip
	
	\noindent {\bf AMS Subject Classification: }  35K57, 35B40, 37N25, 92D25
\section{Introduction}
Over the past decade, there have been a number of works concerning about traveling waves and spreading speeds in  heterogeneous media, see, e.g., \cite{Xin} and references therein. More specifically,  G\"{a}tner and Friedlin \cite{Fri,GF} studied the spreading speed for an equation of Fisher type in which the mobility and the growth function vary periodically in space via probabilistic methods. Shigesada et al. \cite{SKT} first discussed the spread of a single species for a reaction-diffusion model in a patchy habitat with the periodic mobility and growth rate (see also \cite{SK}). Later, Berestycki, Hamel and Roques \cite{Hamel2, Hamel3} analyzed the following reaction-diffusion model in the periodically fragmented environment: 
\begin{equation}\label{BHR}
u_t-\nabla\cdot(A(x)\nabla u)=f(x,u), \quad x\in\mathbb{R}^N,
\end{equation}
where $A(x)$ and $f(x,u)$ depend on $x=(x_1,..,x_N)$ in a periodic fashion, and obtained the existence of pulsating waves and a variational formula for the minimal wave speed. A general theory of spreading speeds and traveling waves 
in a periodic habitat was developed by Weinberger \cite{Wein02} for a recursion with a periodic order-preserving compact operator, and by Liang and Zhao \cite{Liang2} for monotone semiflows with $\alpha$-contraction compactness. Weng and Zhao \cite{Weng} proposed a nonlocal and time-delayed reaction-diffusion model in a periodic habitat and studied 
its propagation phenomena by appealing to the abstract results in \cite{Liang2}. 
It is worthy to point out that the theory in \cite{Wein02,Liang2} may not apply to scalar evolution equations with nonlocal dispersal in a periodic habitat since the associated solution maps are not compact. Recently, 
 Shen and Zhang \cite{Shen1, Shen2} and Coville, D\'{a}vila and  Mart\'{i}nez \cite{CDM} investigated spreading speeds and periodic traveling waves for a large class of such equations via quite different approaches.

For two species reaction-diffusion competition models in a spatially homogeneous environment, there have been quite a few works on persistence, biological invasions of species, traveling wave solutions and the minimal wave speeds, see, e.g., \cite{Kan2,Hos,Lewis,Huang,Huang2,GL} and  references therein. In particular, Lewis, Li and Weinberger \cite{Lewis} studied the spreading speed of the two-species Lotka-Volterra competition model and gave a set of sufficient conditions for its linear determinacy. Huang \cite{Huang} and Guo and Liang \cite{GL} concerned about the minimal speed and the linear determinacy for more general cases. Huang and Han \cite{Huang2} further showed that the conjecture of linear determinacy is not true in general. Meanwhile, for a spatially heterogeneous environment, Dockery et al. \cite{DHMP} investigated the effect of different diffusion rates on the survival of two phenotypes of a species, and showed that the phenotype with the slower diffusion rate wins the competition. Recently, Lam and Ni \cite{LN} studied the global dynamics of two speices Lotka-Volterra competition-diffusion model with spatial heterogeneous growth rates in a bounded domain. Moreover,  Lutscher, McCauley and Lewis \cite{LML} added the advection term into such a competition model to discuss spatial patterns and coexistence mechanisms for stream populations.  However, it seems that there is no research on the propagation phenomena for two species reaction-diffusion competition model in a periodic habitat, which is the simplest form of the heterogeneous environment. 
   
 The purpose of this paper is to study the spatial dynamics of a more general two species competition  reaction-advection-diffusion model in a periodic habitat:
 \begin{eqnarray}\label{VL}
    & &\frac{\partial u_1}{\partial t}=L_1u_1+u_1(b_1(x)-a_{11}(x)u_1-a_{12}(x)u_2), \\ 
    & &\frac{\partial u_2}{\partial t}=L_2u_2+u_2(b_2(x)-a_{21}(x)u_1-a_{22}(x)u_2),\quad t>0, \ x\in\mathbb{R}. \nonumber
    \end{eqnarray}
 Here $L_iu=d_i(x)\frac{\partial^2 u}{\partial x^2}-g_i(x)\frac{\partial u}{\partial x}, i=1,2$, $u_1$ and $u_2$ 
 denote the population densities of two competing species in an $L$-periodic habitat for some positive 
 number $L$, $d_i(x)$, $g_i(x)$ and $b_i(x)$ are  diffusion, advection and growth rates of 
 the $i$-th species ($i=1,2$), respectively, 
 and $a_{ij}(x)(1\le i,j\le2)$ are inter- and intra-specific competition coefficients. We first establish the existence of two semi-trivial periodic steady states $(u^*_1(x),0)$ and $(0,u^*_2(x))$, and the global stability of  $(u^*_1(x),0)$ for system \eqref{VL} with periodic initial data.  Since the steady state $(0,0)$ is between $(u^*_1(x),0)$ and $(0,u^*_2(x))$ with respect to the competitive ordering, we cannot directly use the theory developed in \cite{Liang2} for monotone semiflows to study spreading speeds and spatially periodic traveling waves. Recently, Fang and Zhao \cite{FZ} investigated traveling waves for monotone semiflows with weak compactness in the case where there may be boundary fixed points between two ordered unstable and stable fixed points. Accordingly, in 
 the application of this theory one needs to determine whether the given system admits a single spreading speed and to identify the fixed points connected by traveling waves. Further, the abstract results in \cite{FZ} may not directly apply to the case of a periodic habitat. In Appendix, we adapt this theory for such a case  by combining the abstract results in \cite{Liang2} and \cite{FZ}. We then prove the existence of the rightward spatially periodic traveling waves of system \eqref{VL} connecting $(u^*_1(x),0)$ to $(0,u^*_2(x))$, and show that system \eqref{VL} admits a single rightward spreading speed via the method of upper solutions under appropriate assumptions. We also obtain a set of sufficient conditions for the rightward spreading speed to be linearly determinate. Since one more spreading speed is defined differently from the classical one, it is highly nontrivial to prove that those two speeds are identical. 
 
 The rest of this paper is organized as follows. In section 2, we first obtain the existence of two semi-trivial periodic steady states and the global stability of one semi-trivial periodic steady state for system \eqref{VL} with periodic initial data. In section 3, we establish the existence of the minimal wave speed of the rightward spatially periodic traveling waves and its coincidence with the minimal rightward spreading speed. In section 4, we show that the rightward spreading speed is linearly determinate under additional conditions. In section 5,  we apply the obtained results to a prototypical class of reaction-diffusion systems, which were studied in \cite{DHMP,LN} in the case of a bounded domain.  In the Appendix, we present the abstract results on traveling waves and spreading speeds for 
 monotone semiflows in a periodic habitat.

 After having submitted this paper for publication, we got knowledge of
 Kong, Rawal and Shen's recent paper \cite{KRS}, where they
 studied spreading speeds and linear determinacy for two species 
 competition systems with nonlocal dispersal in time and space periodic 
 habitats by using different methods from ours.

\section{The periodic initial value problem}
In this section, we investigate the global dynamics of the spatially periodic Lotka-Volterra competition 
system with the periodic initial values. 
  
 Throughout this paper, we assume that
 $d_i(x)$, $g_i(x)$, $a_{ij}(x)$ and $b_i(x)$ are $L$-periodic functions, $d_i, g_i,  a_{ij}, b_i\in C^\nu(\mathbb{R})$, and $a_{ij}(\cdot)>0$, $1\le i, j\le2,$ where $C^\nu(\mathbb{R})$ is a H\"{o}lder continuous space with the H\"{o}lder exponent $\nu\in(0,1)$; there exists a positive number $\alpha_0$ such that $d_i(x)\ge\alpha_0, \forall x\in\mathbb{R},i=1,2$, i.e., the operator $L_iu=d_i(x)\frac{\partial^2 u}{\partial x^2}-g_i(x)\frac{\partial u}{\partial x}$ is uniformly elliptic.   

Let $Y$ be the set of all continuous and $L$-periodic functions from $\mathbb{R}$ to $\mathbb{R}$, and $Y_+=\{\psi\in Y: \ \psi(x)\ge0,\forall x\in\mathbb{R}\}$ be a positive cone of $Y$. Equip $Y$ with the maximum norm $\|\phi\|_Y$, that is,
$\|\phi\|_Y=\max_{x\in \mathbb{R}}|\phi(x)|.$ Then $(Y,Y_+)$ is a strongly ordered Banach lattice.
Assume that $L$-periodic functions $d, g, h\in C^\nu(\mathbb{R})$ and $d(\cdot)>0$. 
It then follows that the scalar periodic eigenvalue problem 
\begin{eqnarray}\label{VLpep1}
& & \lambda \phi=d(x)\phi''-g(x)\phi'+h(x)\phi, \quad x\in\mathbb{R},\nonumber\\ 
& &\phi(x+L)=\phi(x),\quad x\in\mathbb{R}
\end{eqnarray}
admits a principal eigenvalue $\lambda(d,g,h)$ associated with a positive $L$-periodic eigenfunction $\phi(x)$(see, e.g., \cite[Theorem 7.6.1]{H} and \cite[Lemma 3.3]{Weng}). 
By \cite[Theorem 2.3.4]{Zhaobook} and similar arguments  to those in \cite[Theorem 3.2]{Weng},  we have the following result.
\begin{proposition}\label{VLexistence}Assume that $L$-periodic functions $d, g,c, e\in C^\nu(\mathbb{R})$, and $d(\cdot)>0, e(\cdot)>0$.  
Let $u(t,x,\phi)$ be the unique solution of the following parabolic equation:
\begin{eqnarray}\label{VLseq}
& & \frac{\partial u}{\partial t}=d(x)\frac{\partial^2 u}{\partial x^2}-g(x)\frac{\partial u}{\partial x}+u(c(x)-e(x)u), \quad t>0,\  x\in\mathbb{R},\nonumber\\
& & u(0,x)=\phi(x)\in Y_+, \quad x\in \mathbb{R}.
\end{eqnarray}  Then the following statements are valid:
\begin{enumerate}
\item[(i)] If $\lambda(d,g,c)\le0$, then $u=0$ is globally asymptotically stable with
respect to initial values in $Y_+$;
\item[(ii)] If $\lambda(d,g,c)>0$, then \eqref{VLseq} admits a unique positive $L$-periodic steady state
$u^*(x)$, and it is globally asymptotically stable with respect to initial values
in $Y_+\backslash\{0\}$.
\end{enumerate}
\end{proposition}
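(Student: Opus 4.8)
The plan is to recast the time-$t$ solution maps of \eqref{VLseq} as a monotone semiflow on the ordered Banach space $(Y,Y_+)$ that is eventually strongly monotone, eventually compact, and \emph{strictly subhomogeneous}, and then to read off the dichotomy (i)--(ii) from the abstract threshold theorem for subhomogeneous monotone maps, \cite[Theorem 2.3.4]{Zhaobook}, with the threshold quantity identified as the principal eigenvalue $\lambda(d,g,c)$ of \eqref{VLpep1} with $h=c$.

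First I would set up the semiflow. Since all coefficients are $L$-periodic, \eqref{VLseq} with $\phi\in Y_+$ is a parabolic problem on the circle $\mathbb{R}/L\mathbb{Z}$, so standard parabolic theory yields a unique classical solution $u(t,x,\phi)$. Comparison with $0$ and with the spatially homogeneous super-solution $v(t)$ solving $v'=v(\max_x c(x)-\min_x e(x)\,v)$ shows that $Y_+$ is positively invariant and that every solution is globally defined and ultimately bounded by a constant independent of $\phi$. Writing $Q_t\phi:=u(t,\cdot,\phi)$, the comparison principle gives monotonicity of $Q_t$, the strong maximum principle gives strong monotonicity for $t>0$ (and strong positivity, $Q_t(Y_+\setminus\{0\})\subset\mathrm{int}\,Y_+$), and parabolic smoothing on the compact spatial domain gives compactness of $Q_t$ for $t>0$. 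For subhomogeneity, note that $f(x,u)=u(c(x)-e(x)u)$ has $f(x,u)/u=c(x)-e(x)u$ strictly decreasing in $u$; hence for $\phi\in Y_+\setminus\{0\}$ and $\alpha\in(0,1)$ the function $\alpha\,u(t,x,\phi)$ is a strict sub-solution of \eqref{VLseq} with datum $\alpha\phi$, so $Q_t(\alpha\phi)\gg\alpha\,Q_t(\phi)$ for $t>0$.

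Next I would compute the linearization at $0$: $DQ_t(0)$ is the time-$t$ map of the linear periodic equation $w_t=d(x)w_{xx}-g(x)w_x+c(x)w$, a compact, strongly positive operator on $Y$; by the Krein--Rutman theorem its spectral radius is a simple eigenvalue with positive periodic eigenfunction, and comparison with \eqref{VLpep1} gives $r(DQ_t(0))=e^{\lambda(d,g,c)t}$. Fixing $t_0>0$ and applying \cite[Theorem 2.3.4]{Zhaobook} to the map $Q_{t_0}$ on $Y_+$ then yields: if $\lambda(d,g,c)\le0$ (so $r(DQ_{t_0}(0))\le1$), every orbit converges to $0$; if $\lambda(d,g,c)>0$ (so $r(DQ_{t_0}(0))>1$), $Q_{t_0}$ has a unique positive fixed point $u^*$ attracting all of $Y_+\setminus\{0\}$. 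Independence of $t_0$ and continuity in $t$ upgrade this to an $L$-periodic steady state of \eqref{VLseq} and to attraction along the semiflow, which proves (ii); uniqueness of the positive periodic steady state also follows directly by a sliding argument: if $v_1,v_2$ are two such states and $\kappa=\max_x v_1(x)/v_2(x)\ge1$ (which one may assume after relabeling), then $z=\kappa v_2-v_1\ge0$ attains an interior zero while satisfying $d z''-g z'+q(x)z=e\kappa v_2^2(1-\kappa)\le0$ for a bounded coefficient $q$, forcing $\kappa=1$ by the strong maximum principle. For (i) I would also record the explicit decaying barrier $\bar u(t,x)=\eta(t)\phi(x)$, with $\phi$ the eigenfunction in \eqref{VLpep1} and $\eta$ solving $\eta'=\lambda\eta-(\min_x e(x)\phi(x))\,\eta^2$, which gives $0\le u(t,x,\phi)\le\eta(t)\phi(x)\to0$ uniformly and in particular handles the borderline case $\lambda(d,g,c)=0$.

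The main obstacle is not any single computation but the careful verification that the hypotheses of \cite[Theorem 2.3.4]{Zhaobook} genuinely hold for $Q_{t_0}$ --- compactness and eventual strong monotonicity of the solution maps, together with the strict subhomogeneity coming from the logistic structure --- and the correct identification of the abstract threshold $r(DQ_{t_0}(0))$ with the sign of $\lambda(d,g,c)$; once these are in place the dichotomy (i)--(ii) is immediate.
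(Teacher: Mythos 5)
Your proposal is correct and follows essentially the same route as the paper, which simply invokes the threshold theorem for strictly subhomogeneous monotone semiflows \cite[Theorem 2.3.4]{Zhaobook} together with the arguments of \cite[Theorem 3.2]{Weng}; your verification of monotonicity, compactness, strict subhomogeneity and the identification $r(DQ_{t_0}(0))=e^{\lambda(d,g,c)t_0}$ is exactly what that citation presupposes. The extra sliding argument for uniqueness and the decaying barrier for the critical case $\lambda(d,g,c)=0$ are sound but already subsumed by the cited abstract theorem.
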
  
Let $\mathbb{P}=PC(\mathbb{R},\mathbb{R}^2)$ be the set of all continuous and $L$-periodic functions from $\mathbb{R}$ to $\mathbb{R}^2$, and $\mathbb{P}_+=\{\psi\in \mathbb{P}: \ \psi(x)\ge0,\forall x\in\mathbb{R}\}$. Then $\mathbb{P}_+$ is a closed cone of $\mathbb{P}$ and induces a partial ordering on $\mathbb{P}$. Moreover, we introduce a norm $\|\phi\|_\mathbb{P}$ by
\[\|\phi\|_\mathbb{P}=\max_{x\in \mathbb{R}}|\phi(x)|.\]
It then follows that $(\mathbb{P},\|\phi\|_\mathbb{P})$ is a Banach lattice. 

Clearly, for any $\varphi\in\mathbb{P}$, \eqref{VL} has a unique solution $u(t,\cdot,\varphi)\in\mathbb{P}$ defined on $[0,t_\varphi)$ with $t_\varphi\in(0,\infty]$. By the comparison principle about for scalar reaction-diffusion equations in a period habitat (see, e.g., \cite[Lemma 3.1]{Weng}), together with the fact that $a_{ij}(x)>0, \forall x\in\mathbb{R},1\le i,j\le2$, it follows that for any $\varphi\in\mathbb{P}_+$,  \eqref{VL} has a unique nonnegative solution $u(t,\cdot,\varphi)$ defined on $[0,\infty)$, and $u(t,\cdot,\varphi)\in\mathbb{P}_+$ for all $t\ge0$. 

By Proposition \ref{VLexistence}, we see that there exists two positive $L$-periodic functions $u^*_1(x)$ and $u^*_2(x)$ such that $E_1:=(u^*_1(x),0)$, $E_2:=(0,u^*_2(x))$ are semi-trivial steady states of system \eqref{VL} provided that $\lambda(d_i,g_i,b_i)>0,\ i=1,2.$
Since we mainly concern about the case of the competition exclusion, we impose the following conditions on  system \eqref{VL}:
\begin{enumerate}
\item[(H1)]$\lambda(d_i,g_i,b_i)>0,\ i=1,2.$
\item[(H2)]$\lambda(d_1,g_1,b_1\!-\!a_{12}u^*_2)>0.$
\item[(H3)]System \eqref{VL} has no coexistence steady state, i.e., there is no steady state in Int$(\mathbb{P}_+)$.
\end{enumerate}

(H1) guarantees the existence of two semi-trivial steady states of system \eqref{VL}. (H2) implies that $(0,u^*_2(x))$ is unstable.
Moreover, by Lemma \ref{mp} with $\mu=0, d(x)=d_1(x)$ and $g(x)=g_1(x),\forall x\in\mathbb{R}$, we know that (H2) implies $\lambda(d_1,g_1,b_1)>0$. Thus, we can simply drop the assumption $\lambda_1(d_1,g_1,b_1)>0$ from (H1). 

Under assumptions (H1)--(H3), there are three steady states in $\mathbb{P}_+$: $E_0=(0,0)$, $E_1:=(u^*_1(x),0)$, and $E_2:=(0,u^*_2(x))$. Next, we use the theory developed in \cite{Hsu} for abstract competitive systems (see 
also \cite{Hess2}) to prove the global stability of $E_1$.
\begin{theorem}\label{VLEQ}
Assume that (H1)--(H3) hold. Then $E_1 (u^*_1(x), 0)$ is globally asymptotically stable for all
initial values in $\mathbb{P}_+\backslash\{0,E_2\}$.
\end{theorem}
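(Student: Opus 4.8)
The plan is to verify the hypotheses of the abstract theory for monotone (competitive) systems from \cite{Hsu,Hess2} and then read off the conclusion. First I would recast system \eqref{VL} on $\mathbb{P}_+$ as a monotone dynamical system with respect to the competitive order $\le_c$ on $\mathbb{P}$ (the one generated by $Y_+\times(-Y_+)$), under which the standard change of variable $(u_1,u_2)\mapsto(u_1,u^*_2-u_2)$—or simply working directly with the competitive cone—makes the semiflow $\Phi_t(\varphi)=u(t,\cdot,\varphi)$ order-preserving; this follows from the comparison principle for cooperative parabolic systems in a periodic habitat already invoked in the excerpt. The three equilibria $E_0,E_1,E_2$ satisfy $E_0 \le_c E_2$ and $E_1 \ge_c E_0$, and in the competitive order $E_1$ and $E_2$ are the two extreme equilibria on the "order interval" $[E_2,E_1]_c$ that contains the relevant dynamics.

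The key steps, in order: (1) Show the solution semiflow is point dissipative and that trajectories from $\mathbb{P}_+$ are eventually confined to the order interval $[\mathbf{0}, (u^*_1,u^*_2)]$; boundedness comes from comparing each component with the logistic scalar equation of Proposition \ref{VLexistence}, whose positive steady state attracts. (2) Establish stability of the boundary equilibria on their respective faces: restricted to the invariant face $u_2\equiv 0$, Proposition \ref{VLexistence}(ii) with $c=b_1$, $e=a_{11}$ gives that $u^*_1$ attracts $Y_+\setminus\{0\}$, so $E_1$ is globally asymptotically stable on that face; similarly $E_2$ attracts on the face $u_1\equiv 0$. (3) Use (H2) to show $E_2$ is unstable "into the interior": linearizing the $u_1$-equation at $E_2$ gives the periodic eigenvalue problem with potential $b_1-a_{12}u^*_2$, and $\lambda(d_1,g_1,b_1-a_{12}u^*_2)>0$ yields an unstable eigendirection pointing into $\mathrm{Int}(\mathbb{P}_+)$; hence $E_2$ is not a limit of any interior trajectory, i.e., it is a repeller in the competitive sense. (4) Invoke (H3): no interior equilibrium exists. (5) Apply the trichotomy/threshold theorem for competitive systems (e.g. \cite[Theorem A or B]{Hsu}): in a strongly monotone competitive system with exactly the equilibria $E_1,E_2$ on the "boundary" of the relevant order interval, no interior equilibrium, and $E_2$ unstable while $E_1$ is (at least locally) stable, every orbit starting in $\mathbb{P}_+\setminus\{\mathbf{0},E_2\}$ converges to $E_1$. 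The local asymptotic stability of $E_1$ needed as input also follows: on the $u_2$-face it is globally stable by step (2), and transversally the linearized $u_2$-equation at $E_1$ has potential $b_2-a_{21}u^*_1$; one checks $\lambda(d_2,g_2,b_2-a_{21}u^*_1)\le 0$, for otherwise $E_1$ would be unstable into the interior and, combined with $E_2$ also being a repeller, the abstract theory would force an interior equilibrium, contradicting (H3). Strong monotonicity (irreducibility) on $\mathrm{Int}(\mathbb{P}_+)$ follows from the strong maximum principle and the coupling $a_{12},a_{21}>0$.

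I expect the main obstacle to be step (3)/(5): carefully justifying that the abstract competitive-systems machinery of \cite{Hsu} applies in the infinite-dimensional periodic setting—namely that $\Phi_t$ is compact (or asymptotically smooth) for $t>0$ so that omega-limit sets are nonempty, compact, and invariant, and that strong monotonicity holds on the interior—so that one can legitimately deduce the absence of interior equilibria forces convergence to $E_1$. Compactness for $t>0$ is standard for uniformly parabolic equations via interior Schauder estimates on the fundamental period, and periodicity replaces the usual bounded-domain hypothesis; still, this is where the bulk of the verification lies. The deduction $\lambda(d_2,g_2,b_2-a_{21}u^*_1)\le 0$ from (H1)–(H3) via "no interior equilibrium" is the other delicate point, since it is an indirect argument through the structure theorem rather than a direct computation.
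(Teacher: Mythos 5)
Your proposal is correct and follows essentially the same route as the paper: both reduce the statement to the Hsu--Smith--Waltman trichotomy for abstract competitive systems, using (H2) and the principal eigenvalue $\lambda(d_1,g_1,b_1-a_{12}u_2^*)>0$ to show that $E_2$ uniformly repels every trajectory with $\phi_1\not\equiv 0$ (the paper makes this quantitative via a comparison with $\eta e^{[\lambda-\varepsilon_0](t-t_0)}\psi_1$), and (H3) to exclude a coexistence state, so that alternative (b) of \cite[Theorem~B]{Hsu} is the only one left. The only real difference is that your step verifying $\lambda(d_2,g_2,b_2-a_{21}u_1^*)\le 0$ (local stability of $E_1$) is not needed: once cases (a) and (c) of Theorem~B are ruled out, that theorem already yields global asymptotic stability of $E_1$.
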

\begin{proof}
Let $u(t,x,\phi)$ be the solution of system \eqref{VL} with $u(0,x)=\phi(x)$. In view of (H2), we can fix
a real number  $\varepsilon_0\in (0,\lambda(d_1,g_1,b_1\!-\!a_{12}u^*_2))$. By the uniform continuity of $F(x,u):=b_1(x)-a_{11}(x)u_1-a_{12}(x)u_2$ on the set $\mathbb{R}\times[0,1]\times[0,b]$, where $b=\max\limits_{x\in \mathbb{R}}u^*_2(x)+1$,  there exists $\delta_0\in(0,1)$ such that 
\[|F(x,u)-F(x,v)|<\varepsilon_0,\quad \forall u=(u_1,u_2),v=(v_1,v_2)\in[0,1]\times[0,b],\ x\in\mathbb{R}\] provided that $|u_i-v_i|<\delta_0, i=1,2.$ Then we have the following observation.

\smallskip

\noindent {\it Claim.}  $\lim\sup_{t\rightarrow\infty}\|u(t,x,\phi)-(0,u^*_2(x))\|_\mathbb{P}\ge\delta_0$
for any $\phi\in\mathbb{P}_+$ with $\phi_1\not\equiv0$.

  Suppose, by contradiction, that $\lim\sup\limits_{t\rightarrow\infty}\|u(t,x,\hat\phi)-(0,u^*_2(x))\|_\mathbb{P}<\delta_0$ for some $\hat\phi\in\mathbb{P}_+$ with $\hat\phi_1\not\equiv0.$ Then there exists $t_0>0$ such that $$\|u_1(t,x,\hat\phi)\|_{Y}<\delta_0,\  \|u_2(t,x,\hat\phi)-u^*_2(x)\|_Y<\delta_0,\  \forall t\ge t_0.$$
Consequently, we have
 \[F(x, u(t,x,\hat{\phi}))>F(x,(0,u^*_2(x)))-\varepsilon_0=b_1(x)-a_{12}(x)u^*_2(x)-\varepsilon_0,\quad  t\ge t_0,\  x\in \mathbb{R}.\]
 Let $\psi_1(x)$ be a positive eigenfunction corresponding to the principal eigenvalue $\lambda(d_1,g_1,b_1\!-\!a_{12}u^*_2)$. Then $\psi_1(x)$ satisfies
 \begin{align}\label{VLeq1}
 &  \lambda(d_1,g_1,b_1\!-\!a_{12}u^*_2)\psi_1\!=\!d_1(x)\psi_1''\!-\!g_1(x)\psi_1'\!+\!(b_1(x)\!-\!a_{12}(x)u^*_2(x))\psi_1,\quad x\in\mathbb{R}, \nonumber\\ 
 & \psi_1(x+L)=\psi_1(x),\quad x\in\mathbb{R}.
 \end{align}
 Since $u_1(0,x)=\hat{\phi_1}\not\equiv0$, by the comparison principle (see, e.g., \cite[Lemma 3.1]{Weng}), as applied to the first equation in system \eqref{VL}, implies that $u_1(t_0,x,\hat\phi)>0,\ \forall x\in\mathbb{R}$. Then there exists small $\eta>0$ such that $u_1(t_0,\cdot)\ge\eta\psi_1\gg0$. Thus, $u_1(t,x,\hat\phi)$ satisfies
 \begin{eqnarray}\label{VLeq2}
 & & \frac{\partial u_1}{\partial t}\ge L_1u_1+u_1(b_1(x)-a_{12}(x)u^*_2(x)-\varepsilon_0), \quad t>t_0,\ x\in\mathbb{R},\nonumber\\ 
 & &u_1(t_0,\cdot)\ge\eta\psi_1.
 \end{eqnarray}
 In view of \eqref{VLeq1}, it easily follows that $v(t,\cdot)=\eta e^{[\lambda(d_1,g_1,b_1-a_{12}u^*_2)-\varepsilon_0](t-t_0)}\psi_1$ satisfies
  \begin{eqnarray}\label{VLeq3}
  & & \frac{\partial v}{\partial t}= L_1v+v(b_1(x)-a_{12}(x)u^*_2(x)-\varepsilon_0), \quad t>t_0, x\in\mathbb{R},\nonumber\\ 
  & &u_1(t_0,\cdot)=\eta\psi_1.
  \end{eqnarray}
  By \eqref{VLeq2} and \eqref{VLeq3}, together with the standard comparison principle, it follows that  
  \[u_1(t,\cdot,\hat\phi)\ge\eta e^{[\lambda(d_1,g_1,b_1-a_{12}u^*_2)-\varepsilon_0](t-t_0)}\psi_1,\quad \forall  t\ge t_0.\] 
  Letting $t\rightarrow \infty$, we see that $u_1(t,\cdot,\hat{\phi})$ is unbounded, a contradiction.
  
   By the above claim and (H3), we rule out possibility (a) and (c) in \cite[Theorem B]{Hsu}. Since $E_2$ is repellent in some neighborhood of itself, \cite[Theorem B]{Hsu} implies that $E_1$ is globally asymptotically stable
   for all initial values in $\mathbb{P}_+\backslash\{0,E_2\}$.   
\end{proof}
\section{Spreading speeds and traveling waves}
In this section, we study the spreading speeds and spatially periodic traveling waves for system \eqref{VL}. 
By a change of variables $v_1=u_1, v_2=u^*_2(x)-u_2$, we transform system \eqref{VL} into the following cooperative system:
\begin{align}\label{NModel}
&\frac{\partial v_1}{\partial t}\!=\!L_1v_1\!+\!v_1(b_1(x)\!-\!a_{12}(x)u^*_2(x)\!-\!a_{11}(x)v_1\!+\!a_{12}(x)v_2),\quad t>0,\  x\in\mathbb{R},\nonumber \\
&\frac{\partial v_2}{\partial t}\!=\!L_2v_2\!+\!a_{21}(x)v_1(u^*_2(x)\!-\!v_2)
+\!v_2(b_2(x)\!-\!2a_{22}(x)u^*_2(x)\!+\!a_{22}(x)v_2).
\end{align}
Note that three steady states of \eqref{VL}, respectively, become
\[\hat E_0=(0,u^*_2(x)),\ \hat E_1=(u^*_1(x),u^*_2(x)),\ \hat E_2=(0,0).\] 

Let $\mathcal{C}$ be the set of all bounded and continuous functions
from $\mathbb{R}$ to $\mathbb{R}^2$ and $\mathcal{C}_+=\{\phi\in\mathcal{C}:\phi(x)\geq0,\ \forall x\in \mathbb{R}\}$.  Assume that $\beta$ is a  strongly positive $L$-periodic continuous function from $\mathbb{R}$ to $\mathbb{R}^2$. Set 
\begin{small}$$\mathcal{C}_{\beta}=\{u\in \mathcal{C}:\, 0\le u(x)\le \beta(x),\ \forall x\in \mathbb{R}\},\  \mathcal{C}^{per}_{\beta}=\{u\in \mathcal{C_\beta}:\,  u(x)=u(x+L),\ \forall x\in \mathbb{R}\}.$$\end{small} Let $X=C([0,L],\mathbb{R}^2)$ equipped with the maximum norm $|\cdot|_X$, $X_+=C([0,L],\mathbb{R}_+^2)$, $$X_{\beta}=\{u\in X:\ 0\le u(x)\le{\beta}(x),\  \forall x\in[0,L]\},\  \text{and}\  \overline{X}_{\beta}=\{u\in X_{\beta}: u(0)=u(L)\}.$$ Let $BC(\mathbb{R}, X)$ be the set of all continuous and bounded functions from $\mathbb{R}$ to $X$. Then we define \begin{small}$$\mathcal{X}=\{v\in BC(\mathbb{R},X):v(s)(L)=v(s+L)(0),\forall s\in \mathbb{R}\},  \mathcal{X}_+=\{v\in \mathcal{X}:v(s)\in X_+,\forall s\in \mathbb{R}\},$$\end{small} and  
$$\mathcal{X}_{\beta}=\{v\in BC(\mathbb{R},X_{\beta}):v(s)(L)=v(s+L)(0),\forall s\in \mathbb{R}\}.$$ We
equip $\mathcal{C}$ and $\mathcal{X}$ with the compact open topology, that is, $u_n\to
u$ in $\mathcal{C}$ or $\mathcal{X}$ means that the sequence of $u_n(s)$ converges to $u(s)$ in $\mathbb{R}^2$ or $X$ uniformly for $s$ in any compact set. We equip $\mathcal{C}$ and $\mathcal{X}$ with the norm $\|\cdot\|_\mathcal{C}$ and $\|\cdot\|_\mathcal{X}$, respectively, by \[\|u\|_{\mathcal{C}}=\sum\limits_{k=1}^{\infty}\frac{\max_{|x|\le k}|u(x)|}{2^k},\ \forall u\in\mathcal{C},\] 
where $|\cdot|$ denotes the usual norm in $\mathbb{R}^2$, and
\[\|u\|_{\mathcal{X}}=\sum\limits_{k=1}^{\infty}\frac{\max_{|x|\le k}|u(x)|_X}{2^k},\ \forall u\in\mathcal{X}.\] 

    Let $\beta(\cdot)=(u^*_1(\cdot),u^*_2(\cdot))$, and $T_1(t)$ and $T_2(t)$ be the linear semigroups generated by $$\frac{\partial v}{\partial t}\!=\!L_1v+v(b_1(x)-a_{12}(x)u^*_2(x))\ \text{and}\ \frac{\partial v}{\partial t}\!=\!L_2v
    +v(b_2(x)-2a_{22}(x)u^*_2(x)),$$ respectively. It follows that $T_1(t)$ and $T_2(t)$ are compact with the respect to the compact open topology for each $t>0$ (see, e.g., \cite{Weng}). For any $u=(u_1,u_2)\in \mathcal{C}_\beta$, define $F:\mathcal{C}_\beta\rightarrow \mathcal{C}$ by
    $$F(u)=\left(\begin{array}{c}
  -a_{11}(x)u^2_1+a_{12}(x)u_1u_2\\
  a_{21}(x)u^*_2(x)u_1-a_{21}(x)u_1u_2+a_{22}(x)u^2_2
    \end{array}\right).$$ Then we rewrite system \eqref{NModel} as an integral equation form:
    \begin{align}\label{inteq}& v(t)=T(t)v(0)+\int_{0}^{t}T(t-s)F(v(s))ds,\quad t>0,\nonumber\\
    & v(0)=\phi\in\mathcal{C}_\beta,
    \end{align}
    where $T(t)=diag(T_1(t),T_2(t))$. 
    
    As usual, a solution of (\ref{inteq}) is called a mild solution of system \eqref{NModel}. 
    It then follows that for any $\phi\in \mathcal{C}_\beta$, system \eqref{NModel} has a mild solution $u(t,\cdot,\phi)$ defined on $[0,\infty)$ with $u(0,\cdot,\phi)=\phi$, and $u(t,\cdot,\phi)\in\mathcal{C}_\beta$ for all $t\ge0$, and it is a classical solution when $t>0$.
   
   	We say that $V(x-ct,x)$ is an $L$-periodic rightward traveling wave of  system \eqref{NModel}
       	 if $V(\cdot+a,\cdot)\in \mathcal{C}_\beta$,\ $\forall a\in \mathbb{R}$, $u(t,x,V(\cdot,\cdot))=V(x-ct,x)$, $\forall t\ge0$, and $V(\xi,x)$ is an $L$-periodic function in $x$ for any fixed $\xi\in\mathbb{R}$. Moreover, we say that $V(\xi,x)$ connects $\beta$ to $0$ if $\lim_{\xi\rightarrow -\infty}|V(\xi,x)-\beta(x)|=0$ and $\lim_{\xi\rightarrow +\infty}|V(\xi,x)|=0$ uniformly for $x\in\mathbb{R}$.
       	
       	 \begin{definition}
    	A function $u(x,t)$ is said to be an upper (a lower) solution of system \eqref{NModel} if it satisfies
    	$$ 
    	u(t)\geq(\leq) T(t) u(0)+ \int_0^t T(t-s)F(u(s))ds,\quad t\ge0.
    	$$
    	 \end{definition}
    Define a family of operators  $\{Q_t\}_{t\ge0}$ on $\mathcal{C}_{\beta}$ by  $Q_t(\phi):=u(t,\cdot,\phi)$, where $u(t,\cdot,\phi)$ is the solution of system \eqref{NModel} with $u(0,\cdot)=\phi\in\mathcal{C}_{\beta}$. It then easily follows that $\{Q_t\}_{t\ge0}$ is a monotone semiflow on $\mathcal{C}_{\beta}$. Note that if $u(t,x,\phi)$ is a solution of \eqref{NModel}, so is $u(t,x-a,\phi),\ \forall a\in L\mathbb{Z}$. This implies that (A1) in the Appendix holds. By Theorem \ref{VLEQ}, we know that for each $t>0$, $(A4)$ holds for $Q_t$. Since $T(t)$ is compact with the compact open topology for each $t>0$, (A2) and (A5) then follow from the same argument as in \cite[Theorem 8.5.2]{Martin}. Thus, we have the 
    following observation.
    \begin{proposition}
    Assume that (H1)--(H3) hold. Then for each $t>0$, $Q_t$ satisfies assumptions (A1)--(A5) in the Appendix.
    \end{proposition}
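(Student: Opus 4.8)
\ The plan is to check the five hypotheses (A1)--(A5) of the Appendix separately for the time-$t$ map $Q_t$ on $\mathcal{C}_{\beta}$, each being a consequence of structural properties already recorded above. For (A1): all the coefficients $d_i,g_i,a_{ij},b_i$ and the bound $\beta=(u_1^*,u_2^*)$ are $L$-periodic, so whenever $v(t,x)$ solves \eqref{NModel} so does $v(t,x-a)$ for every $a\in L\mathbb{Z}$; by uniqueness this yields the translation identity $Q_t(\phi(\cdot-a))=(Q_t\phi)(\cdot-a)$ on $\mathcal{C}_{\beta}$ and, in particular, that $Q_t$ maps the $L$-periodic functions (identified with $\overline{X}_{\beta}$) into themselves. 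For (A3): system \eqref{NModel} is quasimonotone on the order interval $\mathcal{C}_{\beta}$, since the $v_2$-derivative of the first reaction term equals $a_{12}(x)v_1\ge0$ and the $v_1$-derivative of the second equals $a_{21}(x)(u_2^*(x)-v_2)\ge0$ for $0\le v_2\le u_2^*(x)$, which holds on $\mathcal{C}_{\beta}$ because $\beta_2=u_2^*$; the comparison principle for cooperative reaction--advection--diffusion systems in a periodic habitat then makes $Q_t$ order preserving on $\mathcal{C}_{\beta}$ and on $\overline{X}_{\beta}$.

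For the continuity and compactness conditions (A2) and (A5) I would work from the variation-of-constants form \eqref{inteq}, with $T(t)=\mathrm{diag}(T_1(t),T_2(t))$. The decisive facts are that $T_1(t),T_2(t)$ are compact with respect to the compact open topology for every $t>0$ and that $F$ is locally Lipschitz and maps the bounded set $\mathcal{C}_{\beta}$ into a bounded subset of $\mathcal{C}$; an argument parallel to that of \cite[Theorem 8.5.2]{Martin} then gives the continuity of $Q_t$ in the compact open topology and the relative compactness of $Q_t[\mathcal{C}_{\beta}]$ for each $t>0$, which are (A2) and (A5) respectively. I expect this step to be routine, the only care being to invoke the periodic-habitat smoothing estimates for the $T_i(t)$ correctly.

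The substantive step is (A4), which prescribes the fixed-point configuration of the reduced periodic dynamics and the global attractivity of its top state. First I would identify the fixed points of $Q_t$ in $\overline{X}_{\beta}$: under the change of variables $v_1=u_1$, $v_2=u_2^*(x)-u_2$ the steady states $E_0,E_1,E_2$ of \eqref{VL} become $\hat E_2=0$, $\hat E_1=\beta$ and the boundary fixed point $\hat E_0=(0,u_2^*(x))$ with $0\le\hat E_0\le\beta$, while (H3) rules out any further fixed point. Next I would note that the bottom state $0$ is unstable --- this is exactly the instability of $(0,u_2^*)$, i.e.\ (H2) --- and that the intermediate fixed point $\hat E_0$ is unstable --- this is the instability of $(0,0)$, which follows from (H1) and the positivity of $\lambda(d_i,g_i,b_i)$. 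Finally, the convergence requirement built into (A4), namely that every $L$-periodic orbit whose initial datum is not below $\hat E_0$ tends to $\beta$, is precisely Theorem \ref{VLEQ} transported through the change of variables: on $\mathcal{C}_{\beta}$ the condition ``$\pi\not\le\hat E_0$'' is equivalent to ``$\pi_1\not\equiv0$,'' which is exactly the situation in which Theorem \ref{VLEQ} forces convergence to $E_1$.

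The main obstacle I anticipate lies in this last translation. One must match the order interval $\mathcal{C}_{\beta}$ (equivalently $\overline{X}_{\beta}$ for periodic data) with the part of the phase space $\mathbb{P}_+$ of the original competition system on which Theorem \ref{VLEQ} was established, use the forward invariance of $\mathcal{C}_{\beta}$ under $Q_t$ (already noted above), check that the exceptional data correspond correctly --- the state $0$ of Theorem \ref{VLEQ} becomes $\hat E_0$ here and $E_2$ becomes $0$ --- and confirm that the boundary fixed point $\hat E_0$ is handled in exactly the form demanded by the Fang--Zhao-type hypothesis (A4) of the Appendix. Once this dictionary is in place, (A4) holds and all of (A1)--(A5) are verified.
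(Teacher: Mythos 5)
Your proposal is correct and follows essentially the same route as the paper, whose justification is just the short paragraph preceding the proposition: (A1) from $L$-periodicity of the coefficients, (A3) from cooperativity of \eqref{NModel} on $\mathcal{C}_\beta$, (A2) and (A5) from the compactness of $T(t)$ via the argument of \cite[Theorem 8.5.2]{Martin}, and (A4) from Theorem \ref{VLEQ} transported through the change of variables $v_1=u_1$, $v_2=u_2^*-u_2$. Your extra remarks on the instability of $0$ and $\hat E_0$ are not needed for (A4) as stated, and note that (A4) only requires convergence for $0\ll z\le\beta$, which indeed gives $z_1\not\equiv0$ so that Theorem \ref{VLEQ} applies.
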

With the help of $\{Q_t\}_{t\ge0}$, we can introduce a family of operators $\{\hat Q_t\}_{t\ge0}$ on $\mathcal{X}$:
\begin{equation}
\hat Q_t[v](s)(\theta):=Q_t[v_s](\theta),\quad \forall v\in\mathcal{X},\ s\in\mathbb{R},\ \theta\in[0,L], t\ge0,
\end{equation}
where $v_s\in\mathcal{C}$ is defined by  
$$v_s(x)=v(s+n_x)(\theta_x),\quad \forall x=n_x+\theta_x\in\mathbb{R},\ n_x=L\left[\frac{x}{L}\right],\ \theta_x\in[0,L).$$

By Proposition \ref{HQASS}, it is easy to see that $\{\hat Q_t\}_{t\ge0}$ is a monotone semiflow on $\mathcal{X}_\beta$ and $\hat Q_t$ satisfies (B1)--(B5) in the Appendix for each $t>0$.
Now we follow the procedure in the Appendix with $m=2$. 
Let $c^*_+$ and $\overline c_+$ be defined in \eqref{defc} for the rightward direction of spreading with $\tilde P=\hat{Q}_1$. To show that $\overline{c}_+$ is the minimal wave speed for $L$-periodic traveling waves of system \eqref{NModel} connecting $\beta$ to $0$, we need the following assumption:
\begin{enumerate}
\item[(H4)]$c^*_{1+}+c^*_{2-}>0$, where $c^*_{1+}$ and $c^*_{2-}$ are the rightward and leftward spreading speeds 
of \eqref{u1} and \eqref{u2}, respectively.
\end{enumerate}

 We remark that if $L_iu=\frac{\partial}{\partial x}(d_i(x)\frac{\partial u}{\partial x})$ with $d_i\in C^{1+\nu}(\mathbb{R})$, or all the coefficient functions in \eqref{u1} and \eqref{u2} are  even except $g_i$ is odd, $i=1,2$, Lemma \ref{H45} shows that (H1) and (H2) guarantee (H4).

\begin{theorem}\label{MIN}
Assume that (H1)--(H4) hold. Then for any $c\ge\overline{c}_+$, system \eqref{NModel} admits an L-periodic traveling wave $(U(x-ct,x),V(x-ct,x))$ connecting $\beta$ to $0$, with wave profile components $U(\xi,x)$ and $V(\xi,x)$ being continuous and non-increasing in $\xi$, and for any $c<\overline{c}_+$, there is no such traveling wave connecting $\beta$ to $0$.
\end{theorem}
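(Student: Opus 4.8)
The plan is to realize this result as an application of the abstract traveling‑wave machinery for monotone periodic semiflows that is collected in the Appendix, applied to the semiflow $\{\hat Q_t\}_{t\ge 0}$ on $\mathcal{X}_\beta$. By the preceding propositions we already know that $\hat Q_t$ satisfies (B1)--(B5) for each $t>0$, that it is a monotone semiflow with $\hat E_0$ (corresponding to $\beta$) as an unstable fixed point and $\hat E_1$ (corresponding to $0$, after the change of variables $v_1=u_1,\ v_2=u_2^*-u_2$) as a stable fixed point, and that there is a boundary fixed point $\hat E_2$ in between. The abstract theory of Fang and Zhao, adapted to the periodic setting in the Appendix, then yields the existence of $L$-periodic rightward traveling waves for all speeds $c$ above some threshold, and non‑existence below the slowest spreading speed $c_+^*$; the content of the theorem is that this threshold equals $\overline c_+$ and that the waves genuinely connect $\beta$ to $0$ (rather than to the intermediate fixed point).

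First I would verify the hypotheses needed to invoke the abstract existence result: one must rule out the possibility that a traveling wave "gets stuck" at the intermediate fixed point, i.e. one must show that any $L$-periodic rightward traveling wave of $\{\hat Q_t\}$ connecting $\beta$ to a fixed point must in fact connect $\beta$ all the way to $0$. This is where (H3) (no coexistence steady state) and the global attractivity of $E_1$ from Theorem \ref{VLEQ} enter: the only fixed points available are the three $\hat E_0,\hat E_1,\hat E_2$, and a monotone wave profile decreasing in $\xi$ from $\beta$ must limit to a fixed point as $\xi\to+\infty$; Theorem \ref{VLEQ} together with an argument on the $\omega$-limit set of the wave forces this limit to be $0$. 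Second, I would establish that $\hat Q_1$ admits a single rightward spreading speed, equivalently that $c_+^* = \overline c_+$; this is the coincidence of the slowest and fastest spreading speeds in the bistable‑type terminology of the Appendix, and here it is where the scalar spreading speeds $c_{1+}^*$ and $c_{2-}^*$ and hypothesis (H4) are used. The inequality $c_{1+}^*+c_{2-}^*>0$ guarantees, via a comparison/upper‑solution argument, that the spreading speed of the competition system in the rightward direction dominates the linear spreading speed coming from the $v_1$ equation linearized at $\hat E_0$, and this squeezes $c_+^*$ and $\overline c_+$ together.

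The key technical steps, in order, are: (1) identify the linearization of \eqref{NModel} at $\hat E_0=(0,u_2^*)$ and record that its $v_1$-component decouples into the scalar periodic‑habitat equation whose rightward spreading speed is $c_{1+}^*$ (equation \eqref{u1}); (2) construct an explicit $L$-periodic upper solution of \eqref{NModel} of the form built from the principal‑eigenfunction of the associated periodic eigenvalue problem, decaying like $e^{-\mu(x-ct)}$, valid for each $c\ge\overline c_+$, to feed into the abstract existence theorem and to control the decay of the profile at $+\infty$; (3) apply the adapted Fang--Zhao theorem to get, for each $c\ge\overline c_+$, a monotone $L$-periodic traveling wave connecting $\beta$ to some fixed point, then upgrade "some fixed point" to "$0$" using Step (1)'s argument via Theorem \ref{VLEQ} and (H3); (4) for $c<\overline c_+$, use the characterization of $\overline c_+$ as the minimal wave speed in the abstract framework together with (H4) — which forces $c_+^*=\overline c_+$ so that the abstract non‑existence result below $c_+^*$ actually bites below $\overline c_+$ — to conclude non‑existence; (5) translate everything back through $v_1=u_1,\ v_2=u_2^*-u_2$ and the correspondence between $\mathcal{X}_\beta$ and $\mathcal{C}_\beta$ to state the result for the wave profiles $U,V$ in the original variables and record monotonicity.

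The main obstacle I expect is Step (3)–(4): showing that the abstract "fastest" and "slowest" spreading speeds coincide for this system, i.e. that there is a \emph{single} rightward spreading speed, so that the abstract non‑existence statement (valid for $c<c_+^*$) and the abstract existence statement (valid for $c\ge\overline c_+$) meet at a common threshold. In the genuinely bistable‑between‑monostable geometry created by the intermediate fixed point $\hat E_2$, this is not automatic; it requires the construction of a suitable family of upper solutions (the method of upper solutions alluded to in the Introduction) and the hypothesis (H4) to show that the system cannot propagate rightward strictly faster than the linearly determined rate, pinning $\overline c_+$ from above while the spreading‑speed theory of \cite{Liang2} pins $c_+^*$ from below. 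The remaining steps are comparatively routine once the correct periodic eigenvalue problems and upper solutions are written down.
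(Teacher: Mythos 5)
Your overall frame --- feed the semiflow into the adapted Fang--Zhao machinery (Theorem \ref{Tw}) and then upgrade its conclusion --- is the right one, but the mechanism you propose for the one genuinely nontrivial step does not work. The step in question is ruling out alternative (ii) of Theorem \ref{Tw}(2), i.e.\ the ``terrace'' consisting of a monotone wave from $\beta$ down to the intermediate fixed point $\hat E_0=(0,u_2^*(x))$ together with a monotone wave from $\hat E_0$ down to $0$, both with the same speed $c$. You claim this is excluded by (H3) together with the global attractivity of $E_1$ (Theorem \ref{VLEQ}) via an $\omega$-limit argument forcing the profile to limit to $0$ at $+\infty$. That argument fails: $\hat E_0$ is a genuine fixed point of the semiflow, and a monotone wave profile is perfectly entitled to converge at $+\infty$ to an \emph{unstable} fixed point --- that is exactly the standard monostable picture --- while the global attractivity of $E_1$ concerns the periodic initial value problem on $\mathbb{P}_+$ and says nothing about the tail of a non-periodic wave profile. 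In the paper (H3) is used only to conclude that $\alpha_1=\alpha_2=\hat E_0$ in alternative (ii); it does not by itself exclude that alternative.

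Hypothesis (H4) is precisely what kills the terrace, and not in the role you assign it. If alternative (ii) held for some $c\ge\overline c_+$, then on the order interval $[\hat E_0,\hat E_1]$ the second component is frozen at $u_2^*$ and the first component is an $L$-periodic rightward wave of the scalar equation \eqref{u1} connecting $u_1^*$ to $0$ with speed $c$, so $c\ge c^*_{1+}$; on $[\hat E_2,\hat E_0]$ the first component vanishes and $w=u_2^*-v_2$ is an $L$-periodic leftward wave of \eqref{u2} connecting $0$ to $u_2^*$ with speed $-c$, so $-c\ge c^*_{2-}$. Adding gives $c^*_{1+}+c^*_{2-}\le 0$, contradicting (H4). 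By contrast, your assertion that (H4) ``forces $c^*_+=\overline c_+$'' is not correct: the single-spreading-speed property is the content of Theorem \ref{Qspreading}, requires the additional hypothesis (H5) and the explicit upper-solution construction, and is not needed for Theorem \ref{MIN} at all. Likewise, the non-existence for $c<\overline c_+$ needs no detour through $c^*_+$: Theorem \ref{Tw}(3) directly asserts that there is no $L$-periodic traveling wave connecting $\beta$ to $0$ for any $c<\overline c_+$.
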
 
\begin{proof}
(i) In view of Theorem \ref{Tw} (2) and (3), it suffices to rule out the second case in Theorem \ref{Tw} (2). Suppose, by contradiction, that the statement in Theorem \ref{Tw} (2)(ii) is valid for some $c\ge\overline{c}_+$. Note that system \eqref{NModel} has exactly three $L$-periodic nonnegative steady states and $\hat E_0=(0,u_2^*(x))$ is the only intermediate equilibrium between $\hat E_1=\beta$ and $\hat E_2=0$, then we have $\alpha_1=\alpha_2=\hat E_0$. Thus, by restricting  system \eqref{NModel} on the order interval $[\hat{E}_0,\hat{E}_1]$ and $[\hat{E}_2,\hat{E}_0]$, respectively, we see that one scalar equation
\begin{equation}\label{u1}
 u_t=L_1u+u(b_1(x)-a_{11}(x)u)
\end{equation}
admits an $L$-periodic traveling wave $U(x-ct,x)$ connecting $u^*_1(x)$ to $0$ with $U(\xi,x)$ being continuous and nonincreasing in $\xi$, and the other scalar equation 
\begin{equation}
v_t=L_2v+v(b_2(x)-2a_{22}(x)u^*_2+a_{22}(x)v)
\end{equation}
also admits an $L$-periodic traveling wave $V(x-ct,x)$ connecting $u_2^*(x)$
to $0$ with $V(\xi,x)$ being continuous and nonincreasing in $\xi$.
 
Let $W(x-ct,x)=u^*_2(x)-V(x-ct,x)$. Then $W(x-ct,x)$ is an $L$-periodic traveling wave connecting $0$ to $u^*_2(x)$  of the following scalar equation with $W(\xi,x)$ being continuous and nondecreasing in $\xi$
\begin{equation}\label{u2}
w_t=L_2w+w(b_2(x)-a_{22}(x)w).
\end{equation}
  Note that $W(x-ct,x)$ is an $L$-periodic leftward traveling wave connecting $0$ to $u^*_2$ with wave speed $-c$, and that systems \eqref{u1} and \eqref{u2} admit rightward spreading speed $c^*_{1+}$ and leftward spreading speed $c^*_{2-}$, respectively, which are also the rightward and the leftward minimal wave speeds (see, e.g., \cite[Theorem 5.3]{Liang2}). It then follows that $c\ge c^*_{1+}$ and  $-c\ge c^*_{2-}$. This implies that $c^*_{1+}+c^*_{2-}\le 0$, a contradiction.
\end{proof}
Let $\lambda_2(\mu)$ be the principle eigenvalue of the elliptic eigenvalue problem: 
\begin{small}\begin{align}\label{eep2}
&\lambda \psi=d_2(x)\psi''\!-\!(2\mu d_2(x)+g_2(x))\psi'{+}\left(d_2(x)\mu^2\!+\!g_2(x)\mu\!+\!b_2(x)\!-\!a_{22}(x)u^*_2(x)\right)\psi,\quad x\in\mathbb{R},\nonumber\\
&\psi(x+L)=\psi(x),\quad x\in\mathbb{R}.\end{align} \end{small} In order to prove that system \eqref{NModel} admits a single rightward spreading speed, we impose the following assumption:
\begin{enumerate}

\item[(H5)] $\limsup_{\mu\to 0^+}\frac{\lambda _2(\mu)}{\mu}\leq c_{1+}^*$, where $c_{1+}^*$ is the rightward spreading speed of \eqref{u1}.
\end{enumerate} 

 By virtue of Lemma \ref{H45}, it follows that in the case where $L_iu=\frac{\partial}{\partial x}(d_i(x)\frac{\partial u}{\partial x})$ with $d_i\in C^{1+\nu}(\mathbb{R})$, or  all the coefficient functions of system \eqref{NModel} are even except $g_i$ is odd, $i=1,2$, (H5) is automatically satisfied provided that (H1) and (H2) hold true.
\begin{theorem}\label{Qspreading}
Assume that (H1)--(H5) hold. Then the following statements are valid for system \eqref{NModel}:
\begin{enumerate}
		\item[(i)]If $\phi\in\mathcal{C}_{\beta}$, $0\le \phi\le \omega\ll \beta$ for some $\omega\in \mathcal{C}^{per}_{\beta}$, and $\phi(x)=0, \forall x\ge H$, for some $H\in \mathbb{R}$, then $\lim_{t\rightarrow\infty,x\ge ct}u(t,x,\phi)=0$ for any $c>\overline{c}_+$.
		\item[(ii)]If $\phi\in\mathcal{C}_{\beta}$ and $\phi(x)\ge \sigma$, $\forall x\le K$, for some $\sigma\in \mathbb{R}^2$ with $\sigma\gg0$ and $K\in\mathbb{R}$, then $\lim_{t\rightarrow\infty,x\le ct}(u(t,x,\phi)-\beta(x))=0$ for any $c<\overline{c}_+$.
	\end{enumerate}
\end{theorem}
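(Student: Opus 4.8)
The plan is to prove the two one‑sided estimates separately: (ii), that the solution invades $\beta$ no slower than $\overline c_+$, from the abstract spreading‑speed results of the Appendix applied to the lift $\hat Q_1$; and (i), that it invades no faster, from an explicit exponentially decaying upper solution of the cooperative system \eqref{NModel}, into which hypothesis (H5) enters through the second component. I use throughout that $\hat Q_t$ satisfies (B1)--(B5), so that $c^*_+$ and $\overline c_+$ are well defined with $c^*_{1+}\le c^*_+\le\overline c_+$, the inequality $c^*_{1+}\le c^*_+$ holding by monotonicity since the semiflow restricted to the invariant order interval $[\hat E_0,\hat E_1]$ reduces to that of \eqref{u1}.

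Part (ii) follows from the abstract propagation result, which gives $\lim_{t\to\infty,\,x\le ct}(u(t,x,\phi)-\beta(x))=0$ for every front‑like $\phi$ as in (ii) and every speed below the spreading speed of $\hat Q_1$. If that speed is $c^*_+$ rather than $\overline c_+$, one extends to all $c<\overline c_+$ by comparison with the non‑increasing $L$‑periodic traveling wave of speed $\overline c_+$ supplied by Theorem \ref{MIN}: after a long enough time the instability of $\hat E_2$, together with leftward spreading, lifts $u(t_1,\cdot,\phi)$ above a far back‑shifted copy of that wave on all of $\mathbb R$, back‑shifted waves of speed $\overline c_+$ eventually cover every half‑line $\{x\le ct\}$ with $c<\overline c_+$, and $u(t,\cdot,\phi)\le\beta$ always.

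For part (i), note that $v_2=u^*_2-u_2\le u^*_2$, so any solution of \eqref{NModel} with $\phi\in\mathcal C_\beta$ obeys $\partial_t v_1\le L_1v_1+b_1(x)v_1$ and $\partial_t v_2\le L_2v_2+a_{21}(x)u^*_2(x)v_1+(b_2(x)-a_{22}(x)u^*_2(x))v_2$. Let $\lambda_1^0(\mu)$ be the $\mu$‑twisted principal eigenvalue of $L_1+b_1$, with positive $L$‑periodic eigenfunction $\varphi_1$, so $c^*_{1+}=\inf_{\mu>0}\lambda_1^0(\mu)/\mu$, and let $\varphi_2>0$ be the $L$‑periodic eigenfunction for $\lambda_2(\mu)$ of \eqref{eep2}. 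Fix $c>c^*_{1+}$; pick $\mu_1$ with $\lambda_1^0(\mu_1)/\mu_1<c$, and, using (H5) together with the monotonicity of $\mu\mapsto\lambda_2(\mu)/\mu$ (which forces $\lambda_2(\mu)/\mu\le c^*_{1+}<c$ for all small $\mu>0$), pick $\mu_2\in(0,\mu_1)$ with $\lambda_2(\mu_2)/\mu_2<c$. Set $\bar v(t,x)=\big(\min\{u^*_1(x),A_1e^{-\mu_1(x-ct)}\varphi_1(x)\},\ \min\{u^*_2(x),A_2e^{-\mu_2(x-ct)}\varphi_2(x)\}\big)$. A piecewise check shows $\bar v$ is an upper solution of \eqref{NModel}: $\beta=(u^*_1,u^*_2)$ is a steady state; on the exponential branch of the first component the defining inequality reduces to $\mu_1c\ge\lambda_1^0(\mu_1)$; on the exponential branch of the second (where $\bar v_2<u^*_2$), using $e^{-\mu_1(x-ct)}\le e^{-\mu_2(x-ct)}$ for $x\ge ct$, it reduces either to $A_2(\mu_2c-\lambda_2(\mu_2))\varphi_2\ge A_1a_{21}u^*_2\varphi_1$ or to $A_2(\mu_2c-\lambda_2(\mu_2))e^{-\mu_2(x-ct)}\varphi_2\ge a_{21}u^*_1u^*_2$, both of which hold once $A_1,A_2$ are large with $A_2$ large relative to $A_1$; the finitely many switching loci are handled by one‑sided derivatives. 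Given $\phi$ as in (i) — bounded, $\le\omega\ll\beta$, vanishing for $x\ge H$ — one enlarges $A_1,A_2$ so that $\phi\le\bar v(0,\cdot)$; the comparison principle then yields $u(t,\cdot,\phi)\le\bar v(t,\cdot)$ for all $t\ge0$, and $\bar v(t,x)\to0$ uniformly on $\{x\ge c't\}$ for every $c'>c$. As $c>c^*_{1+}$ was arbitrary and $\overline c_+\ge c^*_{1+}$, (i) follows for every $c>\overline c_+$. (Parts (i) and (ii) together also force $\overline c_+=c^*_{1+}$.)

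The step I expect to be the main obstacle is verifying that $\bar v$ is an upper solution in its coupled second component: the second equation of \eqref{NModel} is forced one‑way by $a_{21}u^*_2v_1$, so the exponential ansatz for $\bar v_2$ must decay strictly more slowly than $\bar v_1$ (forcing $\mu_2<\mu_1$) while still obeying $\lambda_2(\mu_2)/\mu_2<c$ — this is exactly where (H5) is indispensable, since without it $\lambda_2(\mu)/\mu$ could exceed every admissible speed for all small $\mu$, the second component would outrun the first, and no single spreading speed could exist. Subsidiary difficulties are the rigorous treatment, in the mild/integral formulation, of the minima and their corners, and, for part (ii), the comparison with the traveling wave used as a subsolution.
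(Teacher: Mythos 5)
Your construction of the exponentially decaying upper solution (the minima of $u_i^*$ with $A_ie^{-\mu_i(x-ct)}\varphi_i$, with $\mu_2<\mu_1$ chosen via (H5)) is essentially the paper's construction, but it is deployed for the wrong purpose, and the two bridges you use to reach the stated ranges of $c$ both have genuine gaps. First, part (i) for $c>\overline c_+$ is already immediate from Theorem \ref{spreading}(i); your upper solution only yields (i) for $c>c^*_{1+}$, and your passage from $c^*_{1+}$ to $\overline c_+$ rests on the claim $c^*_{1+}\le c^*_+$ ``by monotonicity of the restriction to $[\hat E_0,\hat E_1]$.'' That restriction argument gives the \emph{reverse} inequality: $c^*_+$ is the slowest speed at which \emph{every} admissible front-like datum in $\mathcal C_\beta$ converges to $\beta$, so shrinking the class of data to $[\hat E_0,\hat E_1]$ can only increase this infimum, i.e.\ $c^*_+\le c^*_{1+}$. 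The paper obtains $c^*_+\ge c^*_{1+}$ only under the contradiction hypothesis $\overline c_+>c^*_+$, and only via the classification in Theorem \ref{Tw}(1)--(3): the wave of speed $c^*_+$ must then connect $\beta$ to the intermediate equilibrium $\hat E_0$, so its first component is a wave of \eqref{u1} and hence $c^*_+\ge c^*_{1+}$. That step is not replaceable by a one-line monotonicity remark.

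Second, and more seriously, part (ii) on the range $c\in[c^*_+,\overline c_+)$ is the actual content of the theorem, and your argument for it fails. The abstract result gives (ii) only for $c<c^*_+$, and your proposed upgrade --- placing a far back-shifted copy of the speed-$\overline c_+$ traveling wave below $u(t_1,\cdot,\phi)$ on all of $\mathbb R$ --- is impossible: the wave profile satisfies $W(\xi,x)\to\beta(x)$ as $\xi\to-\infty$, whereas at any finite time $t_1$ the solution with, say, $\phi\equiv\sigma$ small on $(-\infty,K]$ stays uniformly bounded away from $\beta$ as $x\to-\infty$; no spatial shift fixes this. The paper's route is different: it proves $c^*_+=\overline c_+$ by contradiction, combining (a) the traveling-wave classification to get $c^*_+\ge c^*_{1+}$ and hence the existence of $\mu_1$ with $\lambda_1(\mu_1)/\mu_1=c_1$ for $c_1\in(c^*_+,\overline c_+)$, (b) the upper solution of speed $c_1$ you constructed, and (c) the lower bound $\liminf_{n\to\infty,\,x\le \hat c n}|u(n,x,\phi)|\ge\delta(\hat c)>0$ for $\hat c<\overline c_+$ from \cite{WLL2002} and \cite{FZ}; evaluating along $x=\hat c n$ with $\hat c\in(c_1,\overline c_+)$ gives $0<\delta(\hat c)\le 0$. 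Once $c^*_+=\overline c_+$ is known, both (i) and (ii) follow from Theorem \ref{spreading}. Your proposal contains ingredient (b) but neither (a) nor (c), and without them the equality $c^*_+=\overline c_+$ --- and hence statement (ii) on $[c^*_+,\overline c_+)$ --- is not established.
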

\begin{proof}
In view of Theorem \ref{spreading}, it suffices to show that $\overline{c}_+=c^*_+$. If this is not valid, then the definition of $\overline{c}_+$ and $c^*_+$ implies that $\overline{c}_+>c^*_+$. By Theorem \ref{Tw} (1) and (3), it follows that system \eqref{NModel} admits an $L$-periodic traveling wave $(U_1(x-c^*_+t,x),U_2(x-c^*_+t,x))$ connecting $(u^*_1(x),u^*_2(x))$ to $(0,u^*_2(x))$ with $U_i(\xi,x) (i=1,2)$ being continuous and nonincreasing in $\xi$. Therefore, $U_2\equiv u^*_2(x)$, and $U_1(x-c^*_+t,x)$ is an $L$-periodic traveling wave connecting $u^*_1(x)$ to $0$. This implies $c^*_+\ge c^*_{1+}$ where $c^*_{1+}$ is the rightward spreading of \eqref{u1}. By \cite[Theorem 1.1]{Hamel1}, it follows that $c^*_{1+}=\inf_{\mu>0}\frac{\lambda_1(\mu)}{\mu}$, where $\lambda_1(\mu)$ is the principal eigenvalue of the scalar elliptic eigenvalue problem:
\begin{eqnarray}\label{eep}
& & \lambda \psi=d_1(x)\psi''-(2\mu d_1(x)+g_1(x))\psi'+(d_1(x)\mu^2+g_1(x)\mu+b_1(x))\psi,\quad x\in\mathbb{R},\nonumber\\
& &\psi(x+L)=\psi(x),\quad x\in\mathbb{R}.
\end{eqnarray}
 For any given $c_1\in(c^*_+,\overline{c}_+)$, there exists $\mu_1>0$ such that $c_1=\frac{\lambda_1(\mu_1)}{\mu_1}$. Let $\phi^*_1(x)$ be the positive  $L$-periodic
eigenfunction associated with the principal eigenvalue $\lambda_1(\mu_1)$ of \eqref{eep}. Then it easily follows that 
$$u_1(t,x):=e^{-\mu_1(x-c_1t)}\phi^*_1(x)=e^{-\mu_1x}e^{\lambda_1(\mu_1)t}\phi^*_1(x),\quad t\ge0,\ x\in\mathbb{R},$$
is a solution of the linear equation
$$
\frac{\partial u_1}{\partial t}=L_1u_1+b_1(x)u_1.
$$
Since $c^*_{1+}<c_1$ and (H5) holds,  we can choose a
small number $\mu_2\in (0, \mu_1)$ such that  $c_2:=\frac{\lambda_2(\mu_2)}{\mu_2}< c_1$.
  Let $\phi_2^*(x)$ be the positive $L$-periodic
eigenfunction associated with the principal eigenvalue $\lambda_2(\mu_2)$ of \eqref{eep2}.
It is easy to see that
$$
u_2(t,x):=e^{-\mu_2(x-c_2t)}\phi^*_2(x)=e^{-\mu_2 x} e^{\lambda _2(\mu_2)t}\phi^*_2(x)
$$
is a solution of the linear equation
\begin{equation}\label{Eq1}
\frac{\partial u_2}{\partial t}=L_2u_2+(b_2(x)-a_{22}(x)u^*_2(x))u_2.
\end{equation}
Since $c_1>c_2$, it follows that the function
$$
v_2(t,x):=e^{-\mu_2(x-c_1t)}\phi^*_2(x)=
e^{\mu_2(c_1-c_2)t}u_2(t,x),\quad t\ge0,\ x\in\mathbb{R},
$$
satisfies
\begin{equation}\label{ineq}
\frac{\partial v_2}{\partial t}\ge L_2v_2+(b_2(x)-a_{22}(x)u^*_2(x))v_2.
\end{equation}

Define two wave-like functions: 
\begin{equation}
\overline{u}_1(t,x):=\min\{m_0e^{-\mu_1(x-c_1t)}\phi^*_1(x),u^*_1(x)\},\quad t\ge0,\ x\in \mathbb{R},
\end{equation} and
\begin{equation}
\overline{u}_2(t,x):=\min\{q_0e^{-\mu_2(x-c_1t)}\phi^*_2(x),u^*_2(x)\},\quad t\ge0,\ x\in\mathbb{R},
\end{equation}
where
$$q_0:= \max_{x\in[0,L]}\frac{u^*_2(x)}{\phi^*_2(x)}>0,\quad m_0:=\min_{x\in[0,L]}\frac{q_0a_{22}(x)\phi^*_2(x)}{a_{21}(x)\phi^*_1(x)}>0.$$
Now, we are ready to verify that $(\overline{u}_1,\overline{u}_2)$ is an upper solution to system \eqref{NModel}. Indeed, for all $x-c_1 t>\frac{1}{\mu_1}\ln\frac{m_0\phi_1^*(x)}{u^*_1(x)}$, we have $\overline{u}_1(t,x)=m_0e^{-\mu_1(x-c_1t)}\phi^*_1(x)$, and hence,
\begin{eqnarray}
& &\frac{\partial \overline{u}_1}{\partial t}-L_1\overline{u}_1-\overline{u}_1(b_1(x)-a_{12}(x)u^*_2(x)-a_{11}(x)\overline{u}_1+a_{12}(x)\overline{u}_2)\nonumber\\
& &\ge\frac{\partial \overline{u}_1}{\partial t}-L_1\overline{u}_1-b_1(x)\overline{u}_1=0\nonumber.
\end{eqnarray}
For all $x-c_1t<\frac{1}{\mu_1}\frac{m_0\phi_1^*(x)}{u^*_1(x)}$, we obtain $\overline{u}_1(t,x)=u^*_1(x)$, and hence,
\begin{eqnarray*}
& &\frac{\partial \overline{u}_1}{\partial t}-L_1\overline{u}_1-\overline{u}_1(b_1(x)-a_{12}(x)u^*_2(x)-a_{11}(x)\overline{u}_1+a_{12}(x)\overline{u}_2)\\
& &\ge\frac{\partial \overline{u}_1}{\partial t}-L_1\overline{u}_1-\overline{u}_1(b_1(x)-a_{11}(x)\overline{u}_1)=0.
\end{eqnarray*}
On the other hand, for all $x-c_1t\!>\!\frac{1}{\mu_2}\ln\frac{q_0\phi_2^*(x)}{u^*_2(x)}\!>\!0$, it follows that  $$\overline{u}_2(t,x)=q_0e^{-\mu_2(x-c_1t)}\phi^*_2(x),$$ which satisfies inequality  \eqref{ineq}. Note that $$\overline{u}_1(t,x)\le m_0e^{-\mu_1(x-c_1t)}\phi^*_1(x), \quad \forall t\ge0,\ x\in\mathbb{R},$$ and $\mu_2\in(0,\mu_1)$, we get  
$$
\begin{array}{l}
\frac{\partial \overline{u}_2}{\partial t}-L_2\overline{u}_2-a_{21}(x)(u^*_2(x)-\overline{u}_2)\overline{u}_1-\overline{u}_2(b_2(x)-2a_{22}(x)u^*_2(x)+a_{22}(x)\overline{u}_2)\nonumber\\
=\frac{\partial \overline{u}_2}{\partial t}-L_2\overline{u}_2-(b_2(x)-a_{22}(x)u^*_2(x))\overline{u}_2+(u^*_2(x)-\overline{u}_2)(a_{22}(x)\overline{u}_2-a_{21}(x)\overline{u}_1)\\
\ge(u^*_2(x)-\overline{u}_2)e^{-\mu_1(x-c_1t)}a_{21}(x)\phi^*_1(x)(\frac{q_0a_{22}(x)\phi^*_2(x)}{a_{21}(x)\phi^*_1(x)}-m_0)\\
\ge0.
\end{array}
$$
For all $x-c_1t<\frac{1}{\mu_2}\ln\frac{q_0\phi_2^*(x)}{u^*_2(x)}$, we have $\overline{u}_2(t,x)=u^*_2(x)$. Therefore,
\begin{align*}
& \frac{\partial \overline{u}_2}{\partial t}-L_2\overline{u}_2-a_{21}(x)(u^*_2(x)-\overline{u}_2)\overline{u}_1-\overline{u}_2(b_2(x)-2a_{22}(x)u^*_2(x)+a_{22}(x)\overline{u}_2)\\
&=-L_2u^*_2-u^*_2(b_2(x)-a_{22}(x)u^*_2)=0.
\end{align*}
It then follows that $\overline{u}=(\overline{u}_1,\overline{u}_2)$ is a continuous upper  solution of system \eqref{NModel}.
\smallskip

  Let $\phi\in \mathcal{C}_\beta$  with $\phi(x)\ge \sigma$, $\forall x\le K$ and $\phi(x)=0$, $\forall x\ge H$, for some $\sigma\in \mathbb{R}^2$ with $\sigma\gg0$ and $K,H\in\mathbb{R}$. By the arguments in \cite[Lemma 2.2]{WLL2002} and the proof of Theorem \ref{spreading}, as applied to $\hat Q_1$,  it follows that for any $c<\overline{c}_+$, there exists $\delta(c)>0$ such that 
  \begin{equation}\label{ineq2}{\liminf}_{n\rightarrow\infty, x\le cn}|u(n,x,\phi)|\ge\delta(c)>0.\end{equation} Moreover, there exists a sufficiently large positive constant $A\in L\mathbb{Z}$ such that 
$$\phi(x)\le\overline{u}(0,x-A) :=\psi(x),\quad \forall x\in \mathbb{R}.$$
By the translation invariance of $Q_t$, it follows that $\overline{u}(t,x-A)$ is still an upper solution of system \eqref{NModel}, and hence,
\begin{equation}\label{ineq3}0\le u(t,x,\phi)\le u(t,x,\psi)=\overline{u}(t,x-A),\quad \forall x\in\mathbb{R},\  t\ge0.\end{equation}
Fix a number $\hat c\in(c_1,\overline{c}_+)$. Letting $t=n$, $x=\hat cn$ and $n\rightarrow\infty$ in \eqref{ineq3}, together with \eqref{ineq2}, we have 
$$0<\delta(\hat c)\le\liminf_{n\rightarrow\infty}|u(n,\hat{c}n,\phi)|\le\lim_{n\rightarrow\infty}|\overline{u}(n,\hat cn-A)|=0,$$  
which is a contradiction. Thus, $c^*_+=\overline{c}_+$.
\end{proof}

Note that the leftward case can be addressed in a similar way. Indeed, by making a change of variable $v(t,x)=u(t,-x)$ for system \eqref{NModel}, we obtain similar results for the rightward case of the resulting system, which is the leftward case for system \eqref{NModel}. 
\begin{remark}\label{Re1}
In the case where $L_iu=\frac{\partial}{\partial x}(d_i(x)\frac{\partial u}{\partial x})$ with $d_i\in C^{1+\nu}(\mathbb{R})$ in system \eqref{NModel}, $i=1,2$, or all the coefficient functions of system \eqref{NModel} are even except $g_i$ is odd, $i=1,2$, it follows from Lemma  \ref{H45} that system \eqref{NModel} admits a single rightward spreading speed which is coincident with the minimal rightward wave speed provided that (H1)--(H3) hold.    
\end{remark}
\section{Linear determinacy of spreading speed}
In this section, we give a set of sufficient conditions for the rightward spreading speed to be determined by the linearization of system \eqref{NModel} at $\hat E_2=(0,0)$, which is
\begin{align}\label{linear}
&\frac{\partial v_1}{\partial t}\!=\!L_1v_1+(b_1(x)-a_{12}(x)u^*_2(x))v_1, \\
&\frac{\partial v_2}{\partial t}\!=\!L_2v_2\!+\!a_{21}(x)u^*_2(x)v_1
+\!(b_2(x)\!-\!2a_{22}(x)u^*_2(x))v_2,\quad t>0,\  x\in\mathbb{R}\nonumber.
\end{align} 

Clearly, under (H2) the following scalar equation 
\begin{equation}\label{c0eq}
\frac{\partial u}{\partial t}= L_1u+u(b_1(x)-a_{12}(x)u^*_2(x)-a_{11}(x)u),\quad t>0, x\in\mathbb{R}, \\
\end{equation}
admits a rightward spreading speed (also the minimal rightward wave speed) $c^0_+=\inf\limits_{\mu>0}\frac{\lambda_0(\mu)}{\mu}$ (see, e.g., \cite[Theorem 1.1]{Hamel1}), where $\lambda_0(\mu)$ is the principle eigenvalue of the following elliptic eigenvalue problem:
\begin{small}
\begin{align}\label{eep0}
&  \lambda \psi\!=\!d_1(x)\psi''\!-\!(2\mu d_1(x)+g_1(x))\psi'\!+\!(d_1(x)\mu^2\!+\!g_1(x)\mu\!+\!b_1(x)\!-\!a_{12}(x)u^*_2(x))\psi,\quad x\in\mathbb{R},\nonumber\\
& \psi(x+L)=\psi(x),\quad x\in\mathbb{R}.
\end{align}
\end{small}
The next result shows that $c^0_+$ is a lower bound of the slowest spreading $c^*_+$ of system \eqref{NModel}.
\begin{proposition}\label{lb}Let (H1)--(H3) hold. Then $c^*_+\ge c^0_+$.
\end{proposition}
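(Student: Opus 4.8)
The plan is to compare system \eqref{NModel} with its first-component subsystem obtained by ignoring the (nonnegative) coupling terms, and then invoke the spreading-speed characterization for the resulting scalar equation together with the already-established identity $c^*_+=\overline{c}_+$ from Theorem \ref{Qspreading}. The key observation is that the first equation of \eqref{NModel} reads
\[
\frac{\partial v_1}{\partial t}=L_1v_1+v_1\bigl(b_1(x)-a_{12}(x)u^*_2(x)-a_{11}(x)v_1+a_{12}(x)v_2\bigr),
\]
and since any mild solution of \eqref{NModel} starting in $\mathcal{C}_\beta$ stays in $\mathcal{C}_\beta$, we have $v_2\ge 0$, so $v_1$ is a supersolution of the scalar equation \eqref{c0eq}. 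By the scalar comparison principle, if $\phi=(\phi_1,\phi_2)\in\mathcal C_\beta$ and $w(t,x)$ solves \eqref{c0eq} with $w(0,\cdot)=\phi_1$, then $v_1(t,x,\phi)\ge w(t,x)$ for all $t\ge 0$.

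First I would fix $c<c^0_+$ and choose initial data $\phi\in\mathcal C_\beta$ with $\phi(x)\gg 0$ for all $x\le K$ (some $K\in\mathbb R$), exactly the class appearing in Theorem \ref{Qspreading}(ii). Since $c^0_+$ is the rightward spreading speed of the scalar equation \eqref{c0eq}, which by (H2) has a positive principal eigenvalue $\lambda_0(0)>0$ and hence a unique positive periodic steady state $u^*_{(0)}(x)$ attracting nontrivial data, Liang--Zhao's scalar theory (cf.\ \cite[Theorem 5.3]{Liang2}) gives $\lim_{t\to\infty,\,x\le ct}\bigl(w(t,x)-u^*_{(0)}(x)\bigr)=0$ for the corresponding scalar solution $w$ with front-like initial data $\le\phi_1$. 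Combining with $v_1(t,x,\phi)\ge w(t,x)$, we obtain $\liminf_{t\to\infty,\,x\le ct}v_1(t,x,\phi)>0$, i.e.\ species-one does not get locally driven to extinction to the left of $ct$. In particular the full solution cannot converge to the intermediate steady state $\hat E_0=(0,u^*_2(x))$ uniformly on $\{x\le ct\}$, and more precisely $\liminf_{t\to\infty,\,x\le ct}|u(t,x,\phi)|>0$.

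Now I would argue by contradiction: suppose $c^*_+<c^0_+$, and pick $c\in(c^*_+,c^0_+)$. On one hand, by Theorem \ref{Qspreading}(i) (valid since $c^*_+=\overline c_+$ under (H1)--(H3) — note Proposition \ref{lb} only assumes (H1)--(H3), so one must be slightly careful here), or more robustly by the definition of $\overline c_+$ as an upper bound for the slowest speed together with the traveling-wave dichotomy in Theorem \ref{Tw}, any solution with compactly-supported-type data on the right decays to $\hat E_2=0$ faster than speed $c^*_+$; on the other hand the previous paragraph shows it stays bounded away from $0$ on $\{x\le cn\}$ for $c<c^0_+$. Since $c^*_+<c<c^0_+$, these two facts are incompatible along the ray $x=cn$, $n\to\infty$, exactly as in the final contradiction of the proof of Theorem \ref{Qspreading}. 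Hence $c^*_+\ge c^0_+$.

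The main obstacle is the bookkeeping about which hypotheses are in force: the statement of Proposition \ref{lb} assumes only (H1)--(H3), whereas the clean form of Theorem \ref{Qspreading} that gives $c^*_+=\overline c_+$ was proved under (H1)--(H5). So the honest route is to work directly with $c^*_+$ as defined in the Appendix (via $\tilde P=\hat Q_1$) and with the abstract spreading/traveling-wave machinery there, using only that $c^*_+$ is by definition the slowest spreading speed of $\{\hat Q_t\}$ and that solutions cannot spread strictly slower than the linearly-determined lower bound forced by the scalar supersolution comparison. Concretely, I expect to need: (a) the scalar comparison $v_1\ge w$; (b) the scalar spreading result for \eqref{c0eq} at speed $c^0_+$; and (c) the abstract fact that if a monotone semiflow with front-like data fails to converge to the smaller equilibrium on $\{x\le cn\}$ for every $c<c^0_+$, then its slowest spreading speed is $\ge c^0_+$. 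Piecing (a)–(c) together, while tracking the equilibrium $\hat E_0$ carefully so the "non-extinction of $v_1$" statement is not vacuous, is the only delicate point; the rest is a direct application of results already established in the excerpt.
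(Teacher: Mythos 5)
Your central comparison step is the same as the paper's: from $v_2\ge 0$ you get $u_1(t,x,\phi)\ge v(t,x,\phi_1)$ where $v$ solves \eqref{c0eq}, and the scalar spreading result for \eqref{c0eq} then forces $u_1$ to persist along any ray $x=ct$ with $c<c^0_+$. The gap is in how you produce the contradicting upper bound. You need $u_1\to 0$ ahead of $x=\hat c t$ for some $\hat c\in(c^*_+,c^0_+)$, but the abstract theory (Theorem \ref{spreading}(i)) only gives decay for $c>\overline{c}_+$, and under (H1)--(H3) alone the equality $\overline{c}_+=c^*_+$ is \emph{not} available — that is precisely the content of Theorem \ref{Qspreading}, which requires (H4)--(H5). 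Your fallback claim that ``any solution with compactly-supported-type data on the right decays to $\hat E_2=0$ faster than speed $c^*_+$'' is false in general: when $\overline{c}_+>c^*_+$ the behaviour in the intermediate zone $c^*_+t<x<\overline{c}_+t$ is not controlled by the abstract results, and the solution may there approach the intermediate equilibrium $\hat E_0=(0,u_2^*)$, so no contradiction with the persistence of $u_1$ behind $c^0_+t$ can be extracted. You flagged the issue yourself but did not resolve it.

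The paper resolves it by splitting into two cases. If $\overline{c}_+=c^*_+$, the argument is exactly yours (and is then legitimate, since $\hat c\in(\overline{c}_+,c^0_+)$ gives decay via Theorem \ref{spreading}(i)). If $\overline{c}_+>c^*_+$, a completely different mechanism is used: Theorem \ref{Tw}(1) and (3) produce an $L$-periodic traveling wave at speed $c^*_+$ connecting $\beta$ to $\hat E_0$ (it cannot connect to $0$ because $c^*_+<\overline{c}_+$), whose first component is then a traveling wave of the decoupled equation \eqref{u1}; hence $c^*_+\ge c^*_{1+}=\inf_{\mu>0}\lambda_1(\mu)/\mu$, and the eigenvalue monotonicity of Lemma \ref{mp}(a) (since $b_1>b_1-a_{12}u_2^*$) gives $c^*_{1+}>c^0_+$. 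Your proposal is missing this second case entirely, so as written it does not prove the proposition under the stated hypotheses (H1)--(H3).
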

\begin{proof}In the case where $\overline{c}_+>c^*_+$, by the same arguments as in Theorem \ref{Qspreading}, we see that $c^*_+\ge c^*_{1+}$ where $c^*_{1+}$ is the rightward spreading speed of \eqref{u1}. Since $b_1(x)>b_1(x)-a_{12}(x)u^*_2(x), \forall x\in \mathbb{R}$, by Lemma \ref{mp} with $d(x)=d_1(x)$ and $g(x)=g_1(x),\forall x\in\mathbb{R}$, it is easy to see that $\lambda_1(\mu)>\lambda_0(\mu), \forall \mu\ge0$,  where  $\lambda_1(\mu)$ is the principal eigenvalue of \eqref{eep}. Thus, we have $c^*_+\ge c^*_{1+}>c^0_+$.

In the case where $\overline{c}_+=c^*_+$, let $u(t,\cdot,\phi)=(u_1(t,\cdot,\phi),u_2(t,\cdot,\phi))$ be the solution of system \eqref{NModel} with $u(0,\cdot)=\phi=(\phi_1,\phi_2)\in \mathcal{C}_\beta$. Then the positivity of the solution implies that
$$\frac{\partial u_1}{\partial t}\ge L_1u_1+u_1(b_1(x)-a_{12}(x)u^*_2(x)-a_{11}(x)u_1),\quad t>0, x\in\mathbb{R}.$$
Let $v(t,x,\phi_1)$ be the unique solution of \eqref{c0eq} with $v(0,\cdot)=\phi_1$. Then the comparison principle yields that
 \begin{equation}\label{ineq1}u_1(t,x,\phi)\ge v(t,x,\phi_1), \quad \forall t\ge0,\  x\in \mathbb{R}.\end{equation}
 Since $\lambda(d_1,g_1,b_1-a_{12}u^*_2)>0$, Proposition \ref{VLexistence} implies that there exists a unique positive $L$-periodic steady state $v_0(x)$ of \eqref{c0eq}. Let $\phi^0=(\phi_1^0,\phi_2^0)\in \mathcal{C}_\beta$ 
 be chosen as in Theorem \ref{Qspreading} (i) and (ii) such that  $\phi_1^0\leq  v_0$.
 Assume, by contradiction, that $c^*_+< c^0_+$. Since $\overline{c}_+=c^*_+$, we can fix a real number
  $\hat c\in(\overline{c}_+,c^0_+)$. Then Theorem  \ref{Qspreading} implies that 
  $\lim_{t\rightarrow\infty,x\ge \hat ct}u_1(t,x,\phi^0)=0$. By Theorem \ref{spreading}, as applied to system \eqref{c0eq}, we further obtain $\lim_{t\rightarrow\infty,x\le \hat ct}(v(t,x,\phi^0_1)-v_0(x))=0$.  
However, letting $x=\hat c t$ in \eqref{ineq1},  we get  $\lim_{t\rightarrow\infty,x= \hat ct}(v(t,x,\phi^0_1))=0$, which is a contradiction.
\end{proof}

For any given $\mu\in \mathbb{R}$, letting $v(t,x)=e^{-\mu x}u(t,x)$ in \eqref{linear}, we then have 
\begin{eqnarray}\label{uModel}
\frac{\partial u_1}{\partial t}&\!=&\!L_1u_1\!-\!2\mu d_1(x)\frac{\partial u_1}{\partial x}\!+\!(d_1(x)\mu^2\!+\!g_1(x)\mu\!+\!b_1(x)\!-\!a_{12}(x)u^*_2(x))u_1, \nonumber\\ 
\frac{\partial u_2}{\partial t}&\!=&\!L_2u_2\!-\!2\mu d_2(x)\frac{\partial u_2}{\partial x}+a_{21}(x)u^*_2(x)u_1\\
& &+(d_2(x)\mu^2\!+\!g_2(x)\mu\!+\!b_2(x)\!-\!2a_{22}(x)u^*_2(x))u_2,\quad t>0, x\in\mathbb{R}\nonumber.
\end{eqnarray}
Substituting $u(t,x)=e^{\lambda t}\phi(x)$ into \eqref{uModel}, we obtain the following periodic eigenvalue problem:
\begin{align}\label{Lpep}
\lambda \phi_1&= d_1(x)\phi_1''\!-\!(2\mu d_1(x)+g_1(x))\phi_1'+(d_1(x)\mu^2\!+\!g_1(x)\mu\!+\!b_1(x)\!-\!a_{12}(x)u^*_2(x))\phi_1,\nonumber \\
\lambda \phi_2&=d_2(x)\phi_2''\!-\!(2\mu d_2(x)+g_2(x))\phi_2'+a_{21}(x)u^*_2(x)\phi_1\nonumber\\
&  +\left(d_2(x)\mu^2\!+\!g_2(x)\mu\!+\!b_2(x)\!-\!2a_{22}(x)u^*_2(x)\right)\phi_2,\quad  x\in\mathbb{R},\\
\phi_i(x)&=\phi_i(x+L), \quad \forall x\in\mathbb{R},\ i=1,2.\nonumber
\end{align}
Let $\overline{\lambda}(\mu)$ be the principal eigenvalue of the following  periodic eigenvalue problem:
\begin{align}\label{Lpep2}
\lambda \psi\!=& d_2(x)\psi''\!-\!(2\mu d_2(x)\!+\!g_2(x))\psi'\nonumber\\
&+\!\left(d_2(x)\mu^2\!+\!g_2(x)\mu\!+\!b_2(x)\!-\!2a_{22}(x)u^*_2(x)\right)\psi,\quad  x\in\mathbb{R},\\
\psi(x)\!=&\psi(x+L), \quad x\in\mathbb{R}.\nonumber
\end{align}
Then there exists $\mu_0>0$ such that $c^0_+=\frac{\lambda_0(\mu_0)}{\mu_0}$. Now we introduce the following condition:
\begin{enumerate}
\item[(D1)]  $\lambda_0 (\mu_0)>\overline{\lambda}(\mu_0)$.
\end{enumerate}

\begin{proposition}\label{ef}
Let (H1)--(H3) and (D1) hold. Then the periodic eigenvalue problem \eqref{Lpep} with $\mu=\mu_0$ has a simple eigenvalue $\lambda_0(\mu_0)$ associated with a positive $L$-periodic eigenfunction $\phi^*=(\phi_1^*,\phi_2^*)$.  
\end{proposition}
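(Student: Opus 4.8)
The plan is to exploit the triangular, weakly coupled cooperative structure of \eqref{Lpep}: its first equation is decoupled from $\phi_2$ and is exactly the scalar periodic eigenvalue problem \eqref{eep0} taken at $\mu=\mu_0$, whose principal eigenvalue is $\lambda_0(\mu_0)$ with an associated strongly positive $L$-periodic eigenfunction. Write $\mathcal{L}_1$ for the periodic elliptic operator in \eqref{eep0} at $\mu=\mu_0$ and $\mathcal{L}_2$ for the one in \eqref{Lpep2} at $\mu=\mu_0$, both regarded as closed operators on $Y$ with compact resolvent; their principal eigenvalues are $\lambda_0(\mu_0)$ and $\overline{\lambda}(\mu_0)$, respectively, each being algebraically simple with a strongly positive eigenfunction by the standard theory of scalar periodic elliptic problems (cf.\ \cite[Theorem 7.6.1]{H}, \cite[Lemma 3.3]{Weng}). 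First I would take $\phi_1^*\gg0$ to be the $L$-periodic principal eigenfunction of $\mathcal{L}_1$ at $\lambda_0(\mu_0)$.

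Next I would construct $\phi_2^*$ by solving the second equation of \eqref{Lpep} with $\mu=\mu_0$ and $\phi_1=\phi_1^*$, namely
\[
(\lambda_0(\mu_0)I-\mathcal{L}_2)\phi_2^*=a_{21}(\cdot)\,u^*_2(\cdot)\,\phi_1^*.
\]
Since $\overline{\lambda}(\mu_0)$ is the spectral bound of $\mathcal{L}_2$ on $Y$ and condition (D1) gives $\lambda_0(\mu_0)>\overline{\lambda}(\mu_0)$, the operator $\lambda_0(\mu_0)I-\mathcal{L}_2$ is boundedly invertible, and --- this is the main point --- its inverse is strongly positive: one writes $(\lambda_0(\mu_0)I-\mathcal{L}_2)^{-1}=\int_{0}^{\infty}e^{-\lambda_0(\mu_0)t}\,e^{t\mathcal{L}_2}\,dt$ and uses that the periodic parabolic semigroup $e^{t\mathcal{L}_2}$ is positive and, by the strong maximum principle, eventually strongly positive (equivalently, one invokes the Krein--Rutman theorem). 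As $a_{21}(\cdot)u^*_2(\cdot)\phi_1^*\in\mathrm{Int}(Y_+)$, this produces a classical $L$-periodic solution $\phi_2^*\gg0$ (elliptic regularity supplies the required smoothness, the data being H\"older continuous). Hence $\phi^*:=(\phi_1^*,\phi_2^*)\gg0$ is an $L$-periodic eigenfunction of \eqref{Lpep} at $\mu=\mu_0$ with eigenvalue $\lambda_0(\mu_0)$.

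It remains to prove simplicity. Note that the triangular structure makes the full operator reducible (the subspace $\{0\}\times Y$ is invariant), so one cannot simply quote Krein--Rutman for the coupled system; condition (D1) is precisely what forces the leading eigenvalue of the system to come from the $\phi_1$-equation. If $(\psi_1,\psi_2)$ solves \eqref{Lpep} with $\mu=\mu_0$ and $\lambda=\lambda_0(\mu_0)$, then $\psi_1$ is an eigenfunction of $\mathcal{L}_1$ for $\lambda_0(\mu_0)$, so $\psi_1=c\phi_1^*$ for some $c\in\mathbb{R}$ by simplicity of the scalar principal eigenvalue; the second equation then gives $(\lambda_0(\mu_0)I-\mathcal{L}_2)\psi_2=c\,a_{21}u^*_2\phi_1^*$, whence $\psi_2=c\phi_2^*$ by invertibility, so the eigenspace is one-dimensional. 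To pass from geometric to algebraic simplicity, I would rule out a generalized eigenfunction: a solution of $(\lambda_0(\mu_0)I-\mathcal{A})\chi=\phi^*$, with $\mathcal{A}$ the operator in \eqref{Lpep} at $\mu=\mu_0$, would in particular require $(\lambda_0(\mu_0)I-\mathcal{L}_1)\chi_1=\phi_1^*$, which is impossible because, by the Fredholm alternative, $\phi_1^*\notin\mathrm{Range}(\lambda_0(\mu_0)I-\mathcal{L}_1)$: the adjoint periodic eigenvalue problem of \eqref{eep0} at $\mu=\mu_0$ also has $\lambda_0(\mu_0)$ as its principal eigenvalue with a strongly positive eigenfunction $\phi_1^{\dagger}$, and $\int_0^L\phi_1^*\phi_1^{\dagger}\,dx>0$. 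Therefore $\lambda_0(\mu_0)$ is an algebraically simple eigenvalue of \eqref{Lpep} at $\mu=\mu_0$ associated with the positive eigenfunction $\phi^*$; the chief difficulty throughout is the strong positivity of the resolvent $(\lambda_0(\mu_0)I-\mathcal{L}_2)^{-1}$, which relies on the strong maximum principle for periodic parabolic equations.
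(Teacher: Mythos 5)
Your proposal is correct and follows essentially the same route as the paper: take $\phi_1^*$ from the scalar principal eigenvalue problem \eqref{eep0}, then define $\phi_2^*=(\lambda_0(\mu_0)I-\mathcal{L}_2)^{-1}a_{21}u_2^*\phi_1^*$, which is well defined and strongly positive because (D1) places $\lambda_0(\mu_0)$ above the spectral bound $\overline{\lambda}(\mu_0)$ of the second scalar operator (the paper invokes Thieme's resolvent-positivity result for exactly the integral representation you write down). Your explicit verification of geometric and algebraic simplicity via the triangular structure and the Fredholm alternative fills in a step the paper only asserts, but it is the same underlying argument.
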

\begin{proof}
Clearly, there exists an $L$-periodic eigenfunction $\phi_1^*\gg 0$ associated with the principle eigenvalue $\lambda_0(\mu_0)$ of \eqref{c0eq}. Since the first equation of \eqref{Lpep} is decoupled from the second one, it suffices to show that $\lambda_0(\mu_0)$ has a positive eigenfunction $\phi^*=(\phi^*_1,\phi^*_2)$ in \eqref{Lpep}, where $\phi^*_2$ is to be determined. Let $U(t)$ be the solution semigroup generated by the following linear scalar partial differential equation:
\begin{align*}
&\frac{\partial u}{\partial t}=\!L_2u\!-\!2\mu_0 d_2(x)\frac{\partial u}{\partial x}\!+\!(d_2(x)\mu_0^2\!+\!g_2(x)\mu_0\!+\!b_2(x)\!-\!2a_{22}(x)u^*_2(x))u,\quad t>0, x\in\mathbb{R},\\
& u(0,\cdot)=\varphi\in Y.
\end{align*}
It is easy to see that $U(t)$ is a positive and compact semigroup on $Y$ with its generator 
$$
A=L_2-2\mu_0 d_2(x)\frac{\partial}{\partial x}+(d_2(x)\mu_0^2\!+\!g_2(x)\mu_0\!+\!b_2(x)\!-\!2a_{22}(x)u^*_2(x)).
$$ 
By \cite[Theorem 3.12]{Thieme}, $A$ is resolvent-positive and  
$$(\lambda I-A)^{-1}\phi=\int^{\infty}_{0}e^{-\lambda t}U(t)\phi dt, \quad \forall\lambda>s(A), \phi\in Y,$$ where $s(A)$ is the spectral bound of $A$.
Note that $\overline\lambda(\mu_0)$ is the principal eigenvalue of \eqref{Lpep2}, that is, $s(A)=\overline\lambda(\mu_0)$. Since $\lambda_0(\mu_0)>\overline\lambda(\mu_0)=s(A)$, we can define 
$\phi^*_2=(\lambda_0(\mu_0) I-A)^{-1}a_{21}u^*_2\phi^*_1\gg0$. It then follows that $(\phi^*_1,\phi^*_2)$ satisfies \eqref{Lpep} with $\mu=\mu_0$. Since $\lambda_0(\mu_0)$ is a simple eigenvalue for \eqref{c0eq}, we see that so is $\lambda_0(\mu_0)$ for \eqref{Lpep}.
\end{proof}
From Proposition \ref{ef}, it is easy to see that for any given $M>0$, the function \begin{equation}\label{eqU}
U(t,x)=Me^{-\mu_0x}e^{\lambda_0(\mu_0)t}\phi^*(x),\quad t\ge0,\ x\in\mathbb{R},
\end{equation} is a positive solution of system \eqref{linear}. In order to obtain an explicit formula for the spreading speeding $\overline{c}_+$, we need the following additional condition:
\begin{enumerate}
\item[(D2)] $\frac{\phi^*_1(x)}{\phi^*_2(x)}\ge\max\left\{\frac{a_{12}(x)}{a_{11}(x)},\frac{a_{22}(x)}{a_{21}(x)}\right\},\quad \forall x\in\mathbb{R}$.
\end{enumerate}
Now we are in a position to show that system \eqref{NModel} admits a single rightward spreading speed $\overline{c}_+$, which is linearly determinate.

\begin{theorem}\label{c0}
Let (H1)--(H3) and (D1)--(D2) hold. Then $\overline{c}_+=c^*_+=c^0_+=\inf_{\mu>0}\frac{\lambda_0(\mu)}{\mu}$.
\end{theorem}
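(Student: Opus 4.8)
The plan is to combine Proposition~\ref{lb} with a matching upper bound $\overline{c}_+\le c^0_+$; together with the general inequality $c^*_+\le\overline{c}_+$ (from the Appendix) and $\overline{c}_+=c^*_+$ (which is Theorem~\ref{Qspreading} under (H1)--(H5), and (D1)--(D2) will be shown to imply (H4)--(H5)), this pins down all three speeds to the explicit value $c^0_+=\inf_{\mu>0}\lambda_0(\mu)/\mu$. So the whole argument reduces to constructing, for the single exponential rate $\mu_0$ realizing $c^0_+=\lambda_0(\mu_0)/\mu_0$, a wave-like \emph{upper solution} of system~\eqref{NModel} moving with speed $c^0_+$, which then forces any rightward traveling wave to have speed $\le c^0_+$, hence $\overline{c}_+\le c^0_+$.

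First I would record the linear solution $U(t,x)=Me^{-\mu_0 x}e^{\lambda_0(\mu_0)t}\phi^*(x)=(U_1,U_2)$ of \eqref{linear} from \eqref{eqU}, guaranteed by Proposition~\ref{ef} under (D1); note $U_i(t,x)=Me^{-\mu_0(x-c^0_+t)}\phi^*_i(x)$. Then I would define the truncated pair
\begin{equation*}
\overline{v}_i(t,x):=\min\{\,Me^{-\mu_0(x-c^0_+t)}\phi^*_i(x)\,,\ u^*_i(x)\,\},\qquad i=1,2,
\end{equation*}
and verify it is an upper solution of the \emph{cooperative} system~\eqref{NModel} by checking, in each of the four regions cut out by the two $\min$'s, the differential inequality obtained by subtracting the right-hand side of \eqref{NModel}. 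In the regions where $\overline{v}_i=u^*_i$ the inequalities reduce (as in Theorem~\ref{Qspreading}) to the steady-state identities plus sign conditions coming from $0\le\overline{v}_j\le u^*_j$ and the cooperativity of \eqref{NModel}. In the regions where $\overline{v}_i$ equals the exponential, the nonlinear terms one discards are $-a_{11}v_1^2+a_{12}v_1v_2$ in the first equation and $-a_{21}v_1v_2+a_{22}v_2^2$ in the second; the point of condition (D2) is exactly that with $\overline{v}_i\le Me^{-\mu_0(x-c^0_+t)}\phi^*_i(x)$ the quadratic remainder $(u^*_2-\overline{v}_2)(a_{22}\overline{v}_2-a_{21}\overline{v}_1)$ (and the analogous term in the first equation) has the right sign, so that $\overline v$ dominates the linear solution's behaviour and the full nonlinear inequality holds. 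Here $M>0$ can be taken arbitrary since the profile is a min with the bounded $u^*_i$.

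Once $\overline v=(\overline v_1,\overline v_2)$ is an upper solution, I would argue by contradiction exactly as in Theorem~\ref{Qspreading}(ii): suppose $\overline c_+>c^0_+$; pick $\hat c\in(c^0_+,\overline c_+)$ and an initial datum $\phi\in\mathcal C_\beta$ with $\phi\gg\sigma$ on $(-\infty,K]$ and $\phi=0$ on $[H,\infty)$, so that by the spreading property (Theorem~\ref{spreading}, as applied to $\hat Q_1$, together with the estimate of \cite[Lemma~2.2]{WLL2002}) we have $\liminf_{n\to\infty,\,x\le\hat c n}|u(n,x,\phi)|\ge\delta(\hat c)>0$; on the other hand, after shifting $\overline v$ by a large $A\in L\mathbb Z$ so that $\phi\le\overline v(0,\cdot-A)$, the comparison principle gives $u(n,\hat c n,\phi)\le\overline v(n,\hat c n-A)\to 0$ (because $\hat c>c^0_+$ makes the exponential factor decay), a contradiction. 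Hence $\overline c_+\le c^0_+$, and combined with $c^0_+\le c^*_+\le\overline c_+$ from Proposition~\ref{lb} and the Appendix, all three coincide; that (D1)--(D2) with (H1)--(H3) imply (H4)--(H5), so that Theorem~\ref{Qspreading} applies to give $c^*_+=\overline c_+$, I would check separately using Lemma~\ref{mp} and the monotonicity $\lambda_0(\mu)\le\lambda_1(\mu)$ type comparisons already used in Proposition~\ref{lb}.

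The main obstacle I anticipate is the region-by-region verification that $\overline v$ is an upper solution — specifically making precise how condition (D2) controls the sign of the discarded quadratic terms uniformly in $x$ and $t$ at the interface where the two branches of each $\min$ meet (the profile is only Lipschitz there, so the inequality must be read in the mild/integral sense of the Definition, or justified via the standard fact that a minimum of two super-solutions is a super-solution). Everything else — the eigenvalue bookkeeping, the comparison argument, and the reduction of the equal-speeds claim to earlier results — is routine given Propositions~\ref{lb} and \ref{ef} and Theorem~\ref{Qspreading}.
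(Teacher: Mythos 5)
Your plan is essentially sound and rests on the same central mechanism as the paper's proof: Proposition \ref{ef} together with (D2) produces an exponential upper solution with decay rate $\mu_0$ travelling at speed $c^0_+$, comparison then caps the spreading speed at $c^0_+$ from above, and Proposition \ref{lb} supplies the matching lower bound. Two differences from the paper are worth noting, plus one warning. First, the truncation $\overline v_i=\min\{Me^{-\mu_0(x-c^0_+t)}\phi_i^*,u_i^*\}$ is unnecessary: the paper checks that the \emph{untruncated} $U(t,x)=Me^{-\mu_0 x}e^{\lambda_0(\mu_0)t}\phi^*(x)$ is already an upper solution of \eqref{NModel}, because the discarded nonlinear terms are exactly $a_{11}U_1U_2\bigl(\phi_1^*/\phi_2^*-a_{12}/a_{11}\bigr)\ge0$ and $a_{21}U_2^2\bigl(\phi_1^*/\phi_2^*-a_{22}/a_{21}\bigr)\ge0$ by (D2); there is no interface to worry about, and $M$ is simply taken large enough that $\phi^0\le U(0,\cdot)$. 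Your region-by-region sign analysis (modelled on the proof of Theorem \ref{Qspreading}, where the two exponentials have \emph{different} rates $\mu_1,\mu_2$) is the hard way to do an easy computation, though it can be rescued by the fact that a componentwise minimum of two upper solutions of a cooperative system is an upper solution. Second, you bound $\overline c_+$ from above directly using the $\liminf$ spreading estimate for $c<\overline c_+$; the paper instead shows $c^*_+\le c^0_+$ via Theorem \ref{spreading}(ii) and then obtains $\overline c_+=c^*_+$ by contradiction through the first case of Proposition \ref{lb} (if $\overline c_+>c^*_+$ then $c^*_+\ge c^*_{1+}>c^0_+$). Both routes close the sandwich $c^0_+\le c^*_+\le\overline c_+\le c^0_+$. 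Finally, drop the claim that (D1)--(D2) imply (H4)--(H5): the paper never asserts this, it does not follow from Lemma \ref{mp} (whose hypotheses encode symmetry or divergence-form structure, not the eigenvalue comparisons in (D1)--(D2)), and your own sandwich argument makes any appeal to Theorem \ref{Qspreading} --- hence to (H4)--(H5) --- unnecessary here.
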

\begin{proof}
 First, we verify that $U(t,x)$, as defined in \eqref{eqU}, is an upper solution of system \eqref{NModel}. Since $\frac{U_1}{U_2}=\frac{\phi^*_1}{\phi^*_2}$ and (D2) holds true, it follows that 
\begin{eqnarray}
& &\frac{\partial U_1}{\partial t}\!-\!L_1 U_1-U_1(b_1(x)-a_{12}(x)u^*_2(x)-a_{11}(x)U_1+a_{12}(x)U_2)\nonumber\\
& &=a_{11}(x)U_1U_2\left(\frac{U_1}{U_2}-\frac{a_{12}(x)}{a_{11}(x)}\right)\nonumber\\
& &=a_{11}(x)U_1U_2\left(\frac{\phi^*_1(x)}{\phi^*_2(x)}-\frac{a_{12}(x)}{a_{11}(x)}\right)\ge 0,
\end{eqnarray}
and
\begin{eqnarray}
& &\frac{\partial U_2}{\partial t}\!-\!L_2U_2\!-\!a_{21}(x)U_1(u^*_2(x)\!-\!U_2)
-\!U_2(b_2(x)\!-\!2a_{22}(x)u^*_2(x)\!+\!a_{22}(x)U_2).\nonumber\\
& &=a_{21}(x)U_2^{^2}\left(\frac{U_1}{U_2}-\frac{a_{22}(x)}{a_{21}(x)}\right)\nonumber\\
& &=a_{21}(x)U_2^{^2}\left(\frac{\phi^*_1(x)}{\phi^*_2(x)}-\frac{a_{22}(x)}{a_{21}(x)}\right)\ge0.
\end{eqnarray}
Thus, $U(t,x)$ is an upper solution of \eqref{NModel}. Choose some $\phi^0\in\mathcal{C}_\beta$ satisfying the conditions in Theorem \ref{Qspreading} (i) and (ii). Then there exists a sufficiently large number $M_0>0$ such that 
$$0\le\phi^0(x)\le M_0e^{-\mu_0x}\phi^*(x)=U(0,x),\quad \forall x\in\mathbb{R}.$$
Let $W(t,x)$ be the unique solution of system \eqref{NModel} with $W(0,\cdot)=\phi_0$. Then the comparison principle, together with the fact that $c^0_+\mu_0=\lambda_0(\mu_0)$, leads that  
$$0\!\le\! W(t,x)\!\le\! U(t,x)\!=\!M_0e^{-\mu_0x}e^{\lambda_0(\mu_0)t}\phi^*(x)\!=\!M_0e^{-\mu_0(x-c^0_+)t}\phi^*(x),\quad \forall t\ge0,\ x\in\mathbb{R}.$$
It follows that for any given $\epsilon>0$, there holds
$$0\le U(t,x)\le M_0e^{-\mu_0\epsilon t}\phi^*(x),\quad \forall t\ge0,\ x\ge(c^0_++\epsilon)t,$$
and hence,
$$\lim_{t\rightarrow\infty,x\ge (c^0_++\epsilon)t}U(t,x)=0.$$
By Theorem \ref{Qspreading} (ii), we obtain $c^*_+\le c^0_++\epsilon$. Letting $\epsilon\rightarrow 0$, we have $c^*_+\le c^0_+$.  Assume, by contradiction, that  $\overline{c}_+>c^*_+$. Then the proof of Proposition \ref{lb} shows that $c^*_+>c^0_+$, a contradiction. This implies that $\overline{c}_+=c^*_+$.  In view of Proposition \ref{lb},
it follows that $\overline{c}_+=c^*_+=c^0_+$. 
\end{proof}
To finish this section, we consider the following classical Lotka-Volterra competition model:
 \begin{eqnarray}\label{Eg}
   & &\frac{\partial u_1}{\partial t}=d_1\Delta u_1+r_1u_1(1-u_1-a_1u_2),\quad t>0, \ x\in\mathbb{R}, \\ 
   & &\frac{\partial u_2}{\partial t}=d_2\Delta u_2+r_2u_2(1-a_2u_1-u_2),\quad t>0, \ x\in\mathbb{R}, \nonumber
   \end{eqnarray}
   \noindent where all parameters are positive constants. This system was investigated in \cite{Lewis}. By straightforward computations (see, e.g., \cite{Lewis}), it follows that if $a_1<1$, there are only three constant steady states $E_0=(0,0)$, $E_1=(1,0)$ and $E_2=(0,1)$, thus (H3) is valid. Since $\lambda(d_2,0,r_2)=r_2>0$ and $\lambda(d_1,0,r_1(1-a_1))=r_1(1-a_1)>0$, we see that (H1) and (H2) are also valid. Moreover, (H4) and (H5) are automatically satisfied due to Lemma \ref{H45}. Thus, system \eqref{Eg} admits a single spreading speed $\overline{c}_+$ no matter whether it is linearly determinate.

   Next, we find some conditions under which (D1)--(D2) hold for system \eqref{Eg}.
   By substituting $d_i(x)=d_i$, $b_i(x)=r_i$, $a_{ii}(x)=r_i, i=1,2$,  $a_{12}(x)=r_1a_1$, and $a_{21}(x)=r_2a_2$ into system \eqref{NModel}, we can reduce the eigenvalue problems \eqref{eep0} and \eqref{Lpep2} to
   \begin{eqnarray}\label{neep0}
   & & \lambda \psi\!=\!d_1\psi''\!-\!2\mu d_1\psi'\!+\!(d_1\mu^2\!+\!r_1\!-\!r_1a_1)\psi,\quad x\in\mathbb{R},\nonumber\\
   & &\psi(x+L)=\psi(x),\quad x\in\mathbb{R},
   \end{eqnarray}
   and 
   \begin{align}\label{nLpep2}
   & \lambda \psi=d_2\psi''\!-\!2\mu d_2\psi'+\left(d_2\mu^2\!-r_2\right)\psi,\quad  x\in\mathbb{R},\nonumber\\
   &  \psi(x)=\psi(x+L), \quad x\in\mathbb{R}.
   \end{align}
Then it is easy to see that  two principle eigenvalues 
$$\lambda_0(\mu)=d_1\mu^2+r_1-r_1a_1,\  \overline{\lambda}(\mu)=d_2\mu^2-r_2$$  have positive constant eigenfunctions. 
By virtue of  
$$
c^0_+=\inf\limits_{\mu>0}\frac{\lambda_0(\mu)}{\mu}=\min\limits_{\mu>0}\left\{d_1\mu+\frac{r_1(1-a_1)}{\mu}\right\},
$$ 
it follows that $$c^0_+=2\sqrt{d_1r_1(1-a_1)},\ \mu_0=\sqrt{\frac{r_1(1-a_1)}{d_1}}.$$
Thus,  (D1) is equivalent to
 $$\lambda_0(\mu_0)=2r_1(1-a_1)>\frac{d_2r_1(1-a_1)}{d_1}-r_2=\overline{\lambda}(\mu_0).$$
On the other hand, the eigenvalue problem \eqref{Lpep} can  be simplified as 
\begin{eqnarray}\label{nLpep}
& &\lambda \phi_1=d_1\phi_1''-2\mu d_1\phi_1'+(d_1\mu^2+r_1-r_1a_1)\phi_1,\nonumber \\
& &\lambda \phi_2=d_2\phi_2''\!-\!2\mu d_2\phi_2'+a_2r_2\phi_1 +(d_2\mu^2-r_2)\phi_2,\quad  x\in\mathbb{R},\\
& &\phi_i(x)=\phi_i(x+L), \quad \forall x\in\mathbb{R},\ i=1,2.\nonumber
\end{eqnarray}
Substituting $(\phi^*_1,\phi^*_2)=(1,k)$ into the second equation of \eqref{nLpep}, we get 
$$k=\frac{a_2}{(1-a_1)\frac{r_1}{r_2}(2-\frac{d_2}{d_1})+1}>0.$$
It then follows that (D2) is equivalent to
$$\frac{\phi^*_1}{\phi^*_2}=\frac{(1-a_1)\frac{r_1}{r_2}(2-\frac{d_2}{d_1})+1}{a_2}\ge \max\left\{a_1,\frac{1}{a_2}\right\},$$ 
and hence,
\begin{eqnarray*}
& &(1-a_1)\frac{r_1}{r_2}(2-\frac{d_2}{d_1})+1\ge a_1 a_2 ,\\
& &(1-a_1)\frac{r_1}{r_2}(2-\frac{d_2}{d_1})\ge 0,
\end{eqnarray*}that is, 
   \begin{eqnarray}\label{cd}
   & &\frac{d_2}{d_1}\le2,\nonumber\\
   & &\frac{a_1a_2-1}{1-a_1}\le\frac{r_1}{r_2}(2-\frac{d_2}{d_1}), 
   \end{eqnarray}which also guarantees that (D1) holds.  Thus, under condition \eqref{cd}, we have $\overline{c}_+=c^0_+=2\sqrt{d_1r_1(1-a_1)}$. This result is consistent with  \cite[Theorem 2.1]{Lewis}. 
\begin{remark}
Consider a more general reaction-diffusion competition system in a periodic habitat, that is, 
\begin{eqnarray}\label{GVL}
   & &\frac{\partial u_1}{\partial t}=L_1u_1+u_1f_1(x,u_1,u_2),\\ 
   & &\frac{\partial u_2}{\partial t}=L_2u_2+u_2f_2(x,u_1,u_2),\quad t>0, \ x\in\mathbb{R}, \nonumber
   \end{eqnarray}
where the operator $L_i:=a^{(i)}_2(x)\frac{\partial^2}{\partial x^2}+a^{(i)}_1(x)\frac{\partial}{\partial x}$ with $a^{(i)}_2(x)>0,\forall x\in \mathbb{R}$, i.e., $L_i$ is uniformly elliptic, $i=1,2$. Assume that $a^{(i)}_j(x)$ and $f_i(x,u_1,u_2)$ are periodic in $x$ with the same period and H\"{o}lder continuous in $x$ of order $\nu\in(0,1)$, $1\leq i,j\leq 2$, and $f_i(x,u_1,u_2)$ are differentiable with respect to $u_1$ and $u_2$, $i=1,2$. Moreover, $\partial_{u_1}f_1(x,u_1,0)<0$ and   $\partial_{u_2}f_2(x,0,u_2)<0$, $\forall x\in\mathbb{R}$, and there exists $M_1>0$ and $M_2>0$ such that $f_1(x,M_1,0)\le0$, $f_2(x,0,M_2)\le 0$, $\partial_{u_2}f_1(x,u_1,u_2)<0$ and $\partial_{u_1}f_2(x,u_1,u_2)<0$ for all $(x,u_1,u_2)\in\mathbb{R}\times[0,M_1]\times[0,M_2]$. Then we can obtain analogous results on traveling waves and spreading speeds under similar assumptions to (H1)--(H5)
and (D1)--(D2).
\end{remark}
\section{An application}
In this section, we study the spatially periodic version of a well-known reaction diffusion model \cite{DHMP,LN}:
 \begin{eqnarray}\label{dhmp}
   & &\frac{\partial u_1}{\partial t}=d_1 \Delta u_1+u_1(a(x)-u_1-cu_2), \\ 
   & &\frac{\partial u_2}{\partial t}=d_2\Delta u_2+u_2(a(x)-u_1-u_2),\quad t>0, \ x\in\mathbb{R}, \nonumber
   \end{eqnarray}
where $0<d_1<d_2$, $0\le c\le1$ and $a(x)$ is an $L$-periodic continuous function for some $L>0$. Note that model \eqref{dhmp} with $c=1$ was proposed in \cite{DHMP}.

For convenience, we use the same notations as in sections 2 and 3.  We first present some results on the principle eigenvalue $\lambda_m(\mu)$ of \eqref{geep}.
\begin{lemma}\label{mp}
Assume that $L$-periodic functions $d,g,m\in C^\nu(\mathbb{R})(\nu\in(0,1))$. Let $\lambda_m(\mu)(\mu\in\mathbb{R})$ be the principle eigenvalue of the following elliptic eigenvalue problem:
 \begin{eqnarray}\label{geep}
 & & \lambda \psi=d(x)\psi''-(2\mu d(x)+g(x))\psi'+(d(x)\mu^2+g(x)\mu+m(x))\psi,\quad x\in\mathbb{R},\nonumber\\
 & &\psi(x+L)=\psi(x),\quad x\in\mathbb{R}.
 \end{eqnarray}
Then the following statements are valid:
\begin{enumerate}
\item[(a)]If $m_1(x)\ge m_2(x)$ with $m_1(x)\not\equiv m_2(x),\forall x\in\mathbb{R}$, then $\lambda_{m_1}(\mu)>\lambda_{m_2}(\mu)$, $\forall\mu\in\mathbb{R}$.
\item[(b)] $\lambda_m(\mu)$ is a convex function of $\mu$ on $\mathbb{R}$.
\item[(c)]If either $d,m$ are even and $g$ is odd, or $d\in C^{1+\nu}(R)(\nu\in(0,1))$ and $g(x)=-d'_1(x),\forall x\in\mathbb{R}$, then $\lambda_m(\mu)=\lambda_m(-\mu), \forall \mu\in\mathbb{R}$.
\end{enumerate}
\end{lemma}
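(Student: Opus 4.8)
The proof proceeds by treating the three statements in order, since (a) feeds into (b) in the standard way. For part (a), the plan is to invoke the monotonicity of the principal eigenvalue of a periodic elliptic operator with respect to the zeroth-order coefficient. Writing $\mathcal{L}_\mu^{(i)}\psi = d(x)\psi'' - (2\mu d(x)+g(x))\psi' + (d(x)\mu^2+g(x)\mu+m_i(x))\psi$, the operators $\mathcal{L}_\mu^{(1)}$ and $\mathcal{L}_\mu^{(2)}$ differ only by the potential term $(m_1(x)-m_2(x))\psi \ge 0$, $\not\equiv 0$. I would let $\psi_2 \gg 0$ be the principal eigenfunction for $\lambda_{m_2}(\mu)$, so that $\mathcal{L}_\mu^{(1)}\psi_2 = \lambda_{m_2}(\mu)\psi_2 + (m_1-m_2)\psi_2 \geq \lambda_{m_2}(\mu)\psi_2$ with strict inequality somewhere; then by the characterization of the principal eigenvalue (e.g. via the Krein–Rutman theorem applied to the resolvent, or the sub/supersolution characterization of $\lambda_{m_1}(\mu)$ as in \cite[Theorem 7.6.1]{H}) one gets $\lambda_{m_1}(\mu) > \lambda_{m_2}(\mu)$. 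The strictness comes from the strong maximum principle / irreducibility of the associated positive semigroup.

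For part (b), the plan is the classical convexity argument for the principal eigenvalue as a function of the exponential-weight parameter $\mu$. Set $v(x) = e^{\mu x}\psi(x)$; then $\lambda_m(\mu)$ is the principal eigenvalue of the operator $v \mapsto d(x)v'' - g(x)v' + m(x)v$ restricted to the space of functions of the form $e^{\mu x}(\text{$L$-periodic})$. I would use the variational (or rather min–max) characterization: for such non-self-adjoint periodic operators one has $\lambda_m(\mu) = \inf_{\varphi \gg 0,\, L\text{-periodic}} \sup_{x} \frac{\mathcal{L}_\mu \varphi(x)}{\varphi(x)}$. Plugging a fixed test function $\varphi$, the quantity $\frac{\mathcal{L}_\mu\varphi}{\varphi}$ is a quadratic (hence convex) function of $\mu$ for each $x$ — it equals $d(x)\mu^2 + g(x)\mu + (\text{terms independent of }\mu)$ — so the pointwise sup over $x$ is convex in $\mu$, and the infimum of a family of convex functions need not be convex, so instead I would argue the other direction: use the characterization $\lambda_m(\mu) = \sup_{\varphi}\inf_x \frac{\mathcal{L}_\mu\varphi}{\varphi}$ (valid here by the Donsker–Varadhan / Nussbaum-type formula for principal eigenvalues of such operators), under which $\lambda_m(\mu)$ is a supremum of convex functions of $\mu$ and hence convex. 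Alternatively, and perhaps more cleanly, I would differentiate the eigenvalue relation twice in $\mu$ (the principal eigenvalue and eigenfunction depend analytically on $\mu$ by simplicity and perturbation theory) and show $\lambda_m''(\mu) \geq 0$ by testing against the adjoint eigenfunction; this reduces to a nonnegativity of an integral of the form $\int (\partial_\mu \psi)$-type squared terms, giving convexity.

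For part (c), the plan is a symmetry/reflection argument. If $d, m$ are even and $g$ is odd, substitute $x \mapsto -x$: letting $\tilde\psi(x) = \psi(-x)$, a direct computation shows $\tilde\psi$ satisfies the eigenvalue problem \eqref{geep} with $\mu$ replaced by $-\mu$ and the same eigenvalue $\lambda$, because each term transforms correctly — $d(-x)=d(x)$, $g(-x)=-g(x)$ flips the sign appropriately against the sign change in $\psi'$ and $\psi''$, and the coefficient $d\mu^2 + g\mu + m$ becomes $d\mu^2 - g\mu + m$ which is exactly the $-\mu$ coefficient. Since both problems have a unique principal eigenvalue, $\lambda_m(\mu) = \lambda_m(-\mu)$. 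For the divergence-form case $g = -d'$ (so $L u = (d(x)u')'$), the operator is self-adjoint in an appropriate weighted space, or one rewrites $\mathcal{L}_\mu$ in divergence form and performs the same $x\mapsto -x$ reflection; the key point is that the conjugation $\psi \mapsto e^{-2\mu x}\psi$ (or an integrating factor) identifies $\mathcal{L}_\mu$ with the formal adjoint of $\mathcal{L}_{-\mu}$, which shares the same principal eigenvalue.

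The main obstacle I anticipate is part (b): getting the convexity rigorously requires either the correct min–max (Nussbaum/Donsker–Varadhan) representation of the principal eigenvalue for a non-self-adjoint periodic operator — which must be the "sup-inf" form to make convexity fall out of "sup of convex functions" — or careful second-order perturbation theory in $\mu$ with control of the adjoint eigenfunction. The reflection argument in (c) and the comparison in (a) are routine once the principal-eigenvalue machinery of \cite[Theorem 7.6.1]{H} and the strong maximum principle are in hand.
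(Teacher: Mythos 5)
Parts (a) and (c) of your plan are sound and essentially coincide with the paper's treatment: for (a) the paper simply invokes the standard monotonicity of the periodic principal eigenvalue in the zeroth-order coefficient (citing \cite[Lemma 15.5]{Hess}), which is exactly your sub/supersolution comparison; for (c) the paper carries out your reflection $x\mapsto -x$ in the even/odd case, and in the divergence-form case $g=-d'$ it integrates the eigenvalue equations for $\mu$ and $-\mu$ against each other's eigenfunctions to show the two operators are formal adjoints and hence share the principal eigenvalue --- the same idea you sketch.

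The genuine gap is in part (b). Your primary route claims that the representation $\lambda_m(\mu)=\sup_{\varphi\gg 0}\inf_{x}\mathcal{L}_\mu\varphi(x)/\varphi(x)$ exhibits $\lambda_m(\mu)$ as a supremum of convex functions of $\mu$. It does not: for fixed $\varphi$ the map $\mu\mapsto \mathcal{L}_\mu\varphi(x)/\varphi(x)$ is a convex quadratic for each fixed $x$, but the inner $\inf_x$ of a family of convex functions is in general not convex, so the outer supremum is taken over functions that are not convex and convexity of $\lambda_m$ does not follow; the $\inf$--$\sup$ form fails for the mirror-image reason, as you yourself observed. Your fallback via second-order perturbation theory is only asserted: $\lambda_m''(\mu)$ contains the cross term $2\langle\psi^*,(\partial_\mu\mathcal{L}_\mu-\lambda_m'(\mu))\partial_\mu\psi\rangle$, which for this non-self-adjoint operator is not manifestly a sum of squares, so the key inequality is exactly what remains to be proved. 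The standard repairs are: (i) eigenfunction interpolation --- writing $A=d\partial_{xx}-g\partial_x+m$ and $u_i=e^{-\mu_i x}\psi_{\mu_i}>0$ with $Au_i=\lambda_m(\mu_i)u_i$, set $u=u_1^{\theta}u_2^{1-\theta}$; since $u'/u=\theta u_1'/u_1+(1-\theta)u_2'/u_2$ and $d>0$, the pointwise convexity of $t\mapsto t^2$ gives $Au\le\bigl(\theta\lambda_m(\mu_1)+(1-\theta)\lambda_m(\mu_2)\bigr)u$, and the supersolution ($\inf$--$\sup$) characterization of the periodic principal eigenvalue yields $\lambda_m(\theta\mu_1+(1-\theta)\mu_2)\le\theta\lambda_m(\mu_1)+(1-\theta)\lambda_m(\mu_2)$; or (ii) the semigroup route --- the kernel of $e^{t\mathcal{L}_\mu}$ is $e^{\mu(x-y)}$ times that of $e^{tA}$, so H\"older's inequality gives log-convexity of the spectral radius in $\mu$. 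The paper itself disposes of (b) by citing \cite[Proposition 4.1]{Weng}, which rests on this kind of argument rather than on a min--max formula.
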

\begin{proof}
By similar arguments to those in \cite[Lemma 15.5]{Hess} 
, it is easy to prove that (a) holds. (b) follows from the same arguments as in  \cite[Proposition 4.1]{Weng}.

In the case where $d,m$  are even functions and $g$ is odd, for any given $\mu\in\mathbb{R}$, let $\psi(x)$ be the positive and $L$-periodic eigenfunction
associated with $\lambda_m (\mu)$. Then we have
\begin{align}
\lambda_m (\mu)\psi(-x)=& d(-x)\psi''(-x)-(2\mu d(-x)+g(-x))\psi'(-x)\\
&+(d(-x)\mu^2+g(-x)\mu+m(-x))\psi(-x),\quad\forall x\in\mathbb{R}.\nonumber
\end{align}
Letting $\varphi(x)=\psi(-x), x\in\mathbb{R}$, we then have $\varphi'(x)=-\psi'(-x),\varphi''(x)=\psi''(-x), \forall x\in\mathbb{R}$. Since $d(x)=d(-x), m(x)=m(-x), g(x)=-g(-x), \forall x\in\mathbb{R}$,
we obtain    
$$
\lambda_m(\mu) \varphi=d(x)\varphi''+(2\mu d(x)-g(x))\varphi'+(d(x)\mu^2-g(x)\mu+m(x))\varphi,\quad \forall x\in\mathbb{R}.
$$
By the uniqueness of the principal eigenvalue, it follows that  $\lambda_m(-\mu)=\lambda_m(\mu), \forall \mu\in\mathbb{R}$. 

 In the case where $d\in C^{1+\nu}(\mathbb{R})(\nu\in(0,1))$ and $-g(x)=d'(x),\forall x\in\mathbb{R}$, for any given $\mu\in \mathbb{R}$, let $\psi(x)$ and $\phi(x)$ be the positive and $L$-periodic eigenfunctions
associated with  $\lambda_m(\mu)$ and $\lambda_m(-\mu)$, respectively, that is,
$$
(d(x)\psi')'-2\mu d(x)\psi'+(d(x)\mu^2-d'(x)\mu+m(x))\psi=\lambda_m(\mu)\psi
$$ 
and 
$$
(d(x)\phi')'+2\mu d(x)\phi'+(d(x)\mu^2+d'(x)\mu+m(x))\phi=\lambda_m(-\mu) \phi.
$$  
Using integration by parts, we have $$\int^{L}_{0}(d(x)\psi'(x))'\phi(x) dx=\int^{L}_{0}(d(x)\phi'(x))'\psi(x) dx,$$ and 
\begin{small}\begin{align*}-&\mu\int^L_0[2d(x)\psi'(x)\phi(x)+d'(x)\psi(x)\phi(x)]dx\\
&=\mu \int^L_0[2(d(x)\phi(x))'\psi(x)-d'(x)\psi(x)\phi(x)]dx\\
&=\mu\int^L_0[2d(x)\phi'(x)\psi(x)+d'(x)\phi(x)\psi(x)]dx.
\end{align*}
\end{small}
It then follows that \begin{eqnarray}\label{22}
\lambda_m (\mu)\int_0^L\psi(x)\phi(x)dx=\lambda_m (-\mu)\int_0^L\phi(x)\psi(x)dx.
\end{eqnarray} Since $\int_0^L\psi(x)\phi(x)dx>0$, we have $\lambda_m(\mu)=\lambda_m(-\mu), \forall \mu\in\mathbb{R}$. 
\end{proof}
\begin{lemma}\label{H45}
Assume that (H1) and (H2) hold. Then (H4) and (H5) are valid  provided that either all the coefficient functions of system \eqref{NModel} are even except $g_i$ is odd, or $d_i\in C^{1+\nu}(R)(\nu\in(0,1))$ and $g_i(x)=-d'_i(x), \forall x\in\mathbb{R}, i=1,2.$
\end{lemma}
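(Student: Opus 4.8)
The plan is to read off both (H4) and (H5) from two structural facts that the hypotheses are designed to supply: (i) under either symmetry assumption, the $\mu$-deformed principal eigenvalue $\lambda_m(\mu)$ of \eqref{geep} is an \emph{even} function of $\mu$ for the relevant triples $(d,g,m)$, so that rightward and leftward spreading speeds coincide; and (ii) (H1) together with the characterization of $u^*_2$ as a positive steady state pins down the value of the relevant eigenvalue at $\mu=0$.

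For (H4): recall from the proof of Theorem \ref{Qspreading} that $c^*_{1+}=\inf_{\mu>0}\lambda_1(\mu)/\mu$, where $\lambda_1(\mu)$ is the principal eigenvalue of \eqref{eep}, that is, $\lambda_m(\mu)$ of Lemma \ref{mp} with $(d,g,m)=(d_1,g_1,b_1)$; and, applying the variational formula of \cite[Theorem 1.1]{Hamel1} to the equation obtained from \eqref{u2} via the reflection $x\mapsto-x$, one gets $c^*_{2-}=\inf_{\mu>0}\lambda_m(-\mu)/\mu$ with $(d,g,m)=(d_2,g_2,b_2)$. Now Lemma \ref{mp}(c) applies: in the first case with these $m$'s, which are even since all structural coefficients except $g_i$ are even and $g_i$ is odd; in the second case since $g_i=-d_i'$. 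Hence both eigenvalues are even in $\mu$, so $c^*_{2-}$ equals the rightward spreading speed $c^*_{2+}$ of \eqref{u2}. By Lemma \ref{mp}(b) they are convex as well, so being convex and even they attain their minimum at $\mu=0$, where they equal $\lambda(d_1,g_1,b_1)>0$ and $\lambda(d_2,g_2,b_2)>0$ by (H1); thus $\lambda_m(\mu)>0$ for all $\mu$. Since $\lambda_m(\mu)/\mu\to+\infty$ both as $\mu\to0^+$ (numerator bounded below by a positive constant) and as $\mu\to+\infty$ (because $d_i\ge\alpha_0>0$), the infima $c^*_{1+}$ and $c^*_{2+}$ are attained and are positive numbers. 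Therefore $c^*_{1+}+c^*_{2-}=c^*_{1+}+c^*_{2+}>0$, which is (H4).

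For (H5): first, $\lambda_2(0)=\lambda(d_2,g_2,b_2-a_{22}u^*_2)=0$, because $u^*_2$ satisfies $L_2u^*_2+u^*_2(b_2-a_{22}u^*_2)=0$ and is therefore a positive $L$-periodic eigenfunction of \eqref{eep2} with $\mu=0$ for the eigenvalue $0$, and the principal eigenvalue is the unique one admitting a positive eigenfunction. Second, $\lambda_2(\mu)$ is even in $\mu$: in the second case directly by Lemma \ref{mp}(c); in the first case one needs $m=b_2-a_{22}u^*_2$ to be even, which holds because $u^*_2$ itself is even — when $d_2,a_{22},b_2$ are even and $g_2$ is odd, $u^*_2(-\,\cdot\,)$ solves the same steady-state equation, so uniqueness of the positive $L$-periodic steady state (Proposition \ref{VLexistence}(ii)) forces $u^*_2(-x)=u^*_2(x)$. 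Being the algebraically simple principal eigenvalue of an elliptic operator whose coefficients depend polynomially on $\mu$, $\lambda_2(\mu)$ is smooth (indeed real-analytic) in $\mu$; being even and differentiable, $\lambda_2'(0)=0$. Hence $\limsup_{\mu\to0^+}\lambda_2(\mu)/\mu=\lim_{\mu\to0}\bigl(\lambda_2(\mu)-\lambda_2(0)\bigr)/\mu=\lambda_2'(0)=0\le c^*_{1+}$, the last inequality by the positivity of $c^*_{1+}$ established above. This is (H5).

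The main obstacle is mostly organizational: pairing the two hypotheses with the two alternatives in Lemma \ref{mp}(c), and, in the ``all coefficients even, $g_i$ odd'' case, propagating that reflection symmetry to $u^*_1,u^*_2$ and hence to every eigenvalue problem entering the definitions of $c^*_{1+}$, $c^*_{2-}$ and $\lambda_2$. The one genuinely analytic point is the differentiability of $\lambda_2(\mu)$ at $\mu=0$: convexity plus evenness alone only give $\limsup_{\mu\to0^+}\lambda_2(\mu)/\mu\ge0$, so to pin the value to exactly $0$ one must use smooth dependence of a simple principal eigenvalue on the parameter $\mu$ — the same analytic-perturbation fact already implicit in the convexity statement \cite[Proposition 4.1]{Weng}. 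A minor point worth recording explicitly is that the leftward spreading speed of \eqref{u2} is $\inf_{\mu>0}\lambda_m(-\mu)/\mu$ with $(d,g,m)=(d_2,g_2,b_2)$, which follows from \cite[Theorem 1.1]{Hamel1} after the change of variable $x\mapsto-x$.
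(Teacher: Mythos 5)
Your proposal is correct and follows essentially the same route as the paper: evenness of the relevant principal eigenvalues via Lemma \ref{mp}(c) (after propagating the reflection symmetry to $u^*_2$ by uniqueness of the positive periodic steady state), convexity from Lemma \ref{mp}(b), positivity at $\mu=0$ from (H1), and $\lambda_2(0)=0$ with $\lambda_2'(0)=0$ for (H5). You are in fact somewhat more careful than the paper on two points it leaves implicit --- why $\inf_{\mu>0}\lambda_1(\mu)/\mu$ is strictly positive (behaviour as $\mu\to0^+$ and $\mu\to\infty$) and why $\lambda_2$ is differentiable at $0$ --- but these are refinements of the same argument, not a different one.
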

\begin{proof}
First, we prove that (H4) holds. Indeed, in either case, by Lemma \ref{mp}(c) with $m(x)=b_1(x)$ and $d(x)=d_1(x)$, it is easy to see that the principle $\lambda_1(\mu)$ of \eqref{eep} is an even function of $\mu$ on $\mathbb{R}$. Since $\lambda_1(\mu)$ is convex on $\mathbb{R}$ and $\lambda_1(0)>0$, we have $\lambda_1(\mu)>0,\forall \mu>0.$ It follows that $c_{1+}^*=\inf_{\mu>0}\frac{\lambda _1(\mu)}{\mu}>0$. Similarly, we can show that $c_{2-}^*>0$, this implies $c_{1+}^*+c_{2-}^*>0$.

To verify (H5), it suffices to show that $\lim_{\mu\to 0^+}\frac{\lambda _2(\mu)}{\mu}=0$, where $\lambda_2(\mu)$ is the principal eigenvalue of \eqref{eep2}. In the case where all the coefficient functions of \eqref{NModel} are even except $g_i$ is odd, $i=1,2$, we have 
$$d_2(x)u_2^{*\prime\prime}(x)+g_2(x)u^{*\prime}_2(x)+u^*_2(x)(b_2(x)-a_{22}(x)u^*_2(x))=0, \quad x\in\mathbb{R}.$$
Let $u_2(x)=u^*_2(-x)$. Since $d_2$, $b_2$, $a_{22}$ are even and $g_2$ is odd, it follows that
$$d_2(x)u_2^{\prime\prime}(x)+g_2(x)u^{\prime}_2(x)+u_2(x)(b_2(x)-a_{22}(x)u_2(x))=0,\quad x\in\mathbb{R}.$$
This implies that $u^*_2(-x)$ is also an $L$-periodic positive steady state for scalar equation \eqref{VLseq} with $d(x)=d_2(x)$, $g(x)=g_2(x)$, $c(x)=b_2(x)$ and $e(x)=a_{22}(x), \forall x\in\mathbb{R}$. In view of Proposition \ref{VLexistence}, the uniqueness of the $L$-periodic positive steady state implies that $u^*_2(-x)=u^*_2(x),\forall x\in\mathbb{R}$. Taking $d(x)=d_2(x)$, $m(x)=b_2(x)-a_{22}(x)u^*_2(x)$, and $g(x)=g_2(x)$, or $g(x)=-d'_2(x)$ in \eqref{geep}, we see from Lemma \ref{mp}(c) that in two cases, $\lambda_2(\mu)$ is an even function on $\mathbb{R}$, and hence, $\lambda'_2(0)=0$. Since $\lambda_2(0)=0$, it follows that
$\lim_{\mu\rightarrow 0^+}\frac{\lambda _2(\mu)}{\mu}=\lambda_2'(0)=0<c^*_{1+}.$
\end{proof}
Now we impose the following assumption on system \eqref{dhmp}:
\begin{enumerate}
\item[(M)] $a(x)$ is non-constant, and $\overline{a}=\frac{1}{L}\int_{0}^{L}a(x)dx\ge0$. 
\end{enumerate}
\begin{lemma}\label{M}
Let (M) hold. Then (H1)--(H3) are valid for system \eqref{dhmp}. 
\end{lemma}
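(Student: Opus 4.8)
The plan is to reduce the whole statement to two properties of the one-dimensional, self-adjoint periodic principal eigenvalue $\lambda(d,0,m)$ of $d\psi''+m(x)\psi=\lambda\psi$ --- the operator relevant here, since \eqref{dhmp} has no advection and, in the notation of \eqref{VL}, $b_i\equiv a$, $a_{11}\equiv a_{21}\equiv a_{22}\equiv 1$ and $a_{12}\equiv c$. First I would record that, dividing the eigenfunction equation by its positive eigenfunction $\psi$ and integrating over one period, $\lambda(d,0,m)=\overline m+\frac{d}{L}\int_0^L(\psi'/\psi)^2\,dx\ge\overline m$, with equality iff $m$ is constant. Second, from the Rayleigh-quotient characterization $\lambda(d,0,m)=\max_{0\ne\psi}\big(-d\!\int_0^L(\psi')^2+\int_0^L m\psi^2\big)/\!\int_0^L\psi^2$ over $L$-periodic $H^1$ functions, I would deduce that $d\mapsto\lambda(d,0,m)$ is strictly decreasing whenever $m$ is non-constant: feeding the (then non-constant) principal eigenfunction for a larger diffusion rate into the quotient for a smaller one produces a strict gain.

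With these in hand, (H1) is immediate: by the first fact and (M), $\lambda(d_i,0,a)>\overline a\ge 0$ for $i=1,2$, which also yields, via Proposition \ref{VLexistence}, the existence of $u_1^*$ and $u_2^*$. For (H2) I would use that $u_2^*$ solves $d_2u_2^{*\prime\prime}+u_2^*(a-u_2^*)=0$, so $\lambda(d_2,0,a-u_2^*)=0$ with eigenfunction $u_2^*$; moreover $a-u_2^*$ is non-constant, for if $a-u_2^*\equiv k$ then integrating $d_2u_2^{*\prime\prime}=-ku_2^*$ over a period forces $k=0$, hence $u_2^*$ and then $a$ constant, contradicting (M). Since $d_1<d_2$, the second fact gives $\lambda(d_1,0,a-u_2^*)>\lambda(d_2,0,a-u_2^*)=0$; and since $0\le c\le 1$ and $u_2^*>0$ force $a-cu_2^*\ge a-u_2^*$, Lemma \ref{mp}(a) (with equality when $c=1$) yields $\lambda(d_1,0,a-cu_2^*)\ge\lambda(d_1,0,a-u_2^*)>0$, i.e. (H2).

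For (H3) I would argue by contradiction: if $(u_1,u_2)\gg 0$ is an $L$-periodic coexistence steady state, then $d_1u_1''+u_1(a-u_1-cu_2)=0$ and $d_2u_2''+u_2(a-u_1-u_2)=0$, so $\lambda(d_1,0,a-u_1-cu_2)=0$ and $\lambda(d_2,0,a-u_1-u_2)=0$ with eigenfunctions $u_1,u_2$. If $a-u_1-u_2$ is non-constant, then $d_1<d_2$ gives $\lambda(d_1,0,a-u_1-u_2)>\lambda(d_2,0,a-u_1-u_2)=0$, while $a-u_1-cu_2\ge a-u_1-u_2$ gives $\lambda(d_1,0,a-u_1-cu_2)\ge\lambda(d_1,0,a-u_1-u_2)>0$, contradicting $\lambda(d_1,0,a-u_1-cu_2)=0$. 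If instead $a-u_1-u_2\equiv k$, integrating $d_2u_2''=-ku_2$ over a period forces $k=0$ and $u_2$ constant; then $a-u_1-cu_2=(1-c)u_2$ is a nonnegative constant, and integrating $d_1u_1''=-(1-c)u_2u_1$ over a period forces $(1-c)u_2=0$, i.e. $c=1$, so $u_1$ and then $a$ are constant, contradicting (M). Either way there is no coexistence steady state, which is (H3).

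I expect the only non-routine ingredient to be the strict monotonicity of $\lambda(d,0,m)$ in the diffusion rate for non-constant $m$; I would supply the short Rayleigh-quotient argument sketched above (alternatively, differentiate $\lambda$ in $d$ via analytic perturbation theory, or invoke the analogous statement in \cite{DHMP}). Everything else is routine: the sign comparisons among the various potentials feed directly into Lemma \ref{mp}(a), and each degenerate sub-case in which a potential happens to be constant is dispatched by integrating the relevant ODE over one period and using positivity of the integral together with (M).
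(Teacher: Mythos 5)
Your proposal is correct and follows essentially the same route as the paper: the integration-by-parts identity giving $\lambda(d,0,a)>\overline a\ge 0$ for non-constant $a$, the strict decrease of $\lambda(d,0,h)$ in $d$ for non-constant $h$ via the Rayleigh quotient, the facts $\lambda(d_2,0,a-u_2^*)=0$ and $a-u_2^*$ non-constant for (H2), and the eigenvalue-comparison contradiction for (H3). The only difference is cosmetic: you dispatch the degenerate constant-potential subcases in (H3) by integrating the ODEs over a period, a detail the paper asserts more tersely.
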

\begin{proof}
Let $\phi$ be the positive periodic eigenfunction associated with the principal eigenvalue $\lambda(d_1,0,a)$, that is,
$$d_1\phi''+a(x)\phi=\lambda(d_1,0,a)\phi.$$
Dividing the above equation by $\phi$ and integrating by parts on $[0,L]$, we get
$$\lambda(d_1,0,a)=\frac{1}{L}\int_{0}^{L}a(x)dx+d_1\int_{0}^{L}\left[\frac{\phi'(x)}{\phi(x)}\right]^2dx.$$  
Since $a(x)$ is non-constant, a simple computation shows that $\phi(x)$ is also non-constant. Therefore, we have $$\lambda(d_1,0,a)>\frac{1}{L}\int_{0}^{L}a(x)dx\ge0.$$
Similarly, we can show that $\lambda(d_2,0,a)>0$. 
It follows that (H1) holds, and hence, system \eqref{dhmp}  has three $L$-periodic steady states $E_0:=(0,0)$, $E_1:=(u^*_1(x),0)$ and $E_2:=(0,u^*_2(x))$ in $\mathbb{P}_+$. Note that 
\begin{equation}\label{eq}
d_2 u^{*''}_2(x)+u^*_2(x)(a(x)-u^*_2(x))=0,\quad x\in\mathbb{R}.
\end{equation}
It follows that $\lambda(d_2,0,a-u^*_2)=0$. If $a(x)-u^*_2(x)$ is a constant, then a straightforward computation shows that $u^*_2$ must be a positive constant eigenfunction associated with $\lambda(d_2,0,a-u^*_2)$. Therefore, $a(x)$ is also a constant, a contradiction. 

Note that for the eigenvalue problem \eqref{VLpep1} with $d_1(x)=d>0$ and $g\equiv0$, we have the variational formula for the principle eigenvalue (see, e.g, \cite{Hamel2}):
$$\lambda(d,0,h)=\min_{\phi\in E}\frac{-d\int^{L}_{0}[\phi'(x)]^2dx+\int^{L}_{0}h(x)\phi^2(x)dx}{\int^{L}_{0}\phi^2(x)dx},$$ where $E:=\{\phi\in C^2(\mathbb{R}):\phi(x)=\phi(x+L)>0,\ \forall x\in\mathbb{R}\}$. It easily follows that if $h(x)$ is non-constant, then $\lambda(d_1,0,h)>\lambda(d_2,0,h)$ provided $d_2>d_1>0$. Therefore, we have $\lambda(d_1,0,a-cu^*_2)>\lambda(d_2,0,a-u^*_2)=0$, that is, (H2) is valid for $c\in[0,1]$. To verify (H3), we suppose, by contradiction, that there is an $L$-periodic coexistence steady state $(u_0,v_0)\gg0$ in $\mathbb{P}_+$. Then we have 
\begin{eqnarray*}
& &d_1 u^{''}_0(x)+u_0(x)(a(x)-u_0(x)-cv_0(x))=0,\quad x\in\mathbb{R},\\
& &d_2 v^{''}_0(x)+v_0(x)(a(x)-u_0(x)-v_0(x))=0,\quad x\in\mathbb{R}.
\end{eqnarray*} 
This implies that  $\lambda(d_1,0,a-u_0-cv_0)=\lambda(d_2,0,a-u_0-v_0)=0$. By way of contradiction, we further show that $a-u_0-cv_0$ is non-constant, $\forall c\in[0,1]$. It then follows that $$\lambda(d_1,0,a-u_0-cv_0)>\lambda(d_2,0,a-u_0-cv_0)\ge\lambda(d_2,0,a-u_0-v_0), \forall c\in[0,1],$$ a contradiction. 
\end{proof}

As a consequence of Lemma \ref{M} and Theorem \ref{VLEQ}, we have the following result.
\begin{theorem}
Let (M) hold. Then $E_1:=(u^*_1(x),0)$ is globally asymptotically stable for all
initial values in $\mathbb{P}_+\backslash\{0,E_2\}$.
\end{theorem}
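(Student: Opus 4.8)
The plan is straightforward: this theorem is designed to follow immediately by combining the two results just established, so most of the work is already done. First I would observe that system \eqref{dhmp} is the special case of the general competition model \eqref{VL} obtained by taking $L_iu=d_i\Delta u$ (so $d_i(x)\equiv d_i$ and $g_i(x)\equiv 0$), $b_1(x)=b_2(x)=a(x)$, $a_{11}(x)=1$, $a_{12}(x)=c$, $a_{21}(x)=1$, and $a_{22}(x)=1$. All of these coefficients are $L$-periodic, and since $a\in C^\nu(\mathbb{R})$ and $a_{ij}(\cdot)>0$ the standing regularity and positivity hypotheses of Section 2 are met, with uniform ellipticity constant $\alpha_0=\min\{d_1,d_2\}>0$. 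Hence the framework of Section 2 applies to \eqref{dhmp} verbatim, and in particular the Banach lattice $(\mathbb{P},\|\cdot\|_{\mathbb{P}})$, the cone $\mathbb{P}_+$, and the three steady states $E_0=(0,0)$, $E_1=(u_1^*(x),0)$, $E_2=(0,u_2^*(x))$ are exactly the objects used there.

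Next I would invoke Lemma \ref{M}: under assumption (M), conditions (H1)--(H3) hold for system \eqref{dhmp}. With (H1)--(H3) in force, Theorem \ref{VLEQ} applies directly and gives that $E_1=(u_1^*(x),0)$ is globally asymptotically stable for all initial data in $\mathbb{P}_+\setminus\{0,E_2\}$, which is precisely the claimed assertion.

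Since both ingredients are already proved, there is essentially no obstacle to overcome; the proof is a one-line deduction. The only point to verify carefully is a bookkeeping one — namely that the hypotheses of Theorem \ref{VLEQ} consist of nothing beyond (H1)--(H3) (in particular no extra assumption on the advection terms is needed, and here $g_i\equiv 0$ in any case), and that the definitions of $\mathbb{P}$, $\mathbb{P}_+$ and of the equilibria $E_0,E_1,E_2$ in the concrete model \eqref{dhmp} agree with those in the general setting. They do, so the argument reads simply: "By Lemma \ref{M}, (H1)--(H3) hold for system \eqref{dhmp}; the conclusion then follows from Theorem \ref{VLEQ}."
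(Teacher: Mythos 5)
Your proposal is correct and is exactly the paper's argument: the theorem is stated there as an immediate consequence of Lemma \ref{M} (which gives (H1)--(H3) under (M)) combined with Theorem \ref{VLEQ}. The bookkeeping check that \eqref{dhmp} fits the framework of \eqref{VL} is the only content, and you have carried it out correctly.
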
   
For simplicity, we transfer system \eqref{dhmp} into the following cooperative system:
\begin{eqnarray}\label{dhmp2}
   & &\frac{\partial u_1}{\partial t}=d_1 \frac{\partial^2 u_1}{\partial x^2}+u_1(a(x)-cu^*_2(x)-u_1+cu_2), \\ 
   & &\frac{\partial u_2}{\partial t}=d_2\frac{\partial^2 u_2}{\partial x^2}+u_1(u^*_2(x)-u_2)
   +u_2(a(x)-2u^*_2(x)+u_2),\quad t>0, \ x\in\mathbb{R}. \nonumber
   \end{eqnarray}
   Let $u^*(\cdot)=(u^*_1(\cdot),u^*_2(\cdot))$. Define a family of operators  $\{Q_t\}_{t\ge0}$ on $\mathcal{C}_{u^*}$ by  $Q_t(\phi):=u(t,\cdot,\phi)$, where $u(t,\cdot,\phi)$ is the unique solution of system \eqref{dhmp2} with $u(0,\cdot)=\phi\in\mathcal{C}_{u^*}$. Let $\{\hat Q_t\}_{t\ge0}$ be defined as in \eqref{hatQ} and $\overline{c}_+$ be denoted by \eqref{defc} with $\tilde P=\hat Q_1$. By virtue of Lemma \ref{mp}, Lemma \ref{H45} and Proposition \ref{lb}, we see that $\overline{c}_+\ge c^0_+>0$. 
   
   The next result is the consequence of Theorem \ref{Qspreading} and Remark \ref{Re1}.
   \begin{theorem}
   Assume that (M) holds. Let $u(t,\cdot,\phi)$ be the solution of system \eqref{dhmp2} with $u(0,\cdot)=\phi\in\mathcal{C}_{u^*}$. Then the following statements are valid for system \eqref{dhmp2}:
   \begin{enumerate}
   			\item[(i)]If $\phi\in\mathcal{C}_{\beta}$, $0\le \phi\le \omega\ll \beta$ for some $\omega\in \mathcal{C}^{per}_{\beta}$, and $\phi(x)=0, \forall x\ge H$, for some $H\in \mathbb{R}$, then $\lim_{t\rightarrow\infty,x\ge ct}u(t,x,\phi)=0$ for any $c>\overline{c}_+$.
   			\item[(ii)]If $\phi\in\mathcal{C}_{\beta}$ and $\phi(x)\ge \sigma$, $\forall x\le K$, for some $\sigma\in \mathbb{R}^2$ with $\sigma\gg0$ and $K\in\mathbb{R}$, then $\lim_{t\rightarrow\infty,x\le ct}(u(t,x,\phi)-\beta(x))=0$ for any $c\in(0,\overline{c}_+)$.
   	\end{enumerate}
   \end{theorem}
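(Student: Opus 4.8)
The plan is to recognize that the cooperative system \eqref{dhmp2} is exactly the special case of the general cooperative system \eqref{NModel} obtained by taking $d_1(x)\equiv d_1$, $d_2(x)\equiv d_2$, $g_1(x)\equiv g_2(x)\equiv 0$, $b_1(x)=b_2(x)=a(x)$, $a_{11}(x)\equiv 1$, $a_{12}(x)\equiv c$, $a_{21}(x)\equiv 1$, $a_{22}(x)\equiv 1$, with $\beta(\cdot)=u^*(\cdot)=(u^*_1(\cdot),u^*_2(\cdot))$, the pair of positive $L$-periodic semi-trivial components provided by Lemma \ref{M}. In particular $\beta$ is a strongly positive $L$-periodic continuous function, as required in Section 3. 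Once this identification is in place, the theorem reduces to an application of Theorem \ref{Qspreading} together with Remark \ref{Re1}, so the only genuine task is to check that the standing hypotheses (H1)--(H5) hold for \eqref{dhmp2}.

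First I would invoke Lemma \ref{M}: assumption (M) yields (H1)--(H3) for \eqref{dhmp}, hence for its equivalent cooperative form \eqref{dhmp2}. Next I would verify (H4) and (H5). Here I would observe that, viewed as an instance of \eqref{NModel}, every coefficient function of \eqref{dhmp2} is a constant, hence an even function, while $g_i\equiv 0$ may be regarded as an odd function; thus the hypotheses of Lemma \ref{H45} are met, and (H1)--(H2) (already established) imply (H4) and (H5). Equivalently, one may note directly that $L_iu=d_i\,\partial^2 u/\partial x^2=\partial_x\!\big(d_i\,\partial_x u\big)$ with the constant $d_i\in C^{1+\nu}(\mathbb{R})$, so that Remark \ref{Re1} is directly applicable. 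Either route establishes all of (H1)--(H5) for \eqref{dhmp2}, and in fact shows that \eqref{dhmp2} admits a single rightward spreading speed coinciding with the minimal rightward wave speed.

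With (H1)--(H5) in hand, Theorem \ref{Qspreading} applies to \eqref{dhmp2}: statement (i) of the present theorem is precisely Theorem \ref{Qspreading}(i), and statement (ii) is Theorem \ref{Qspreading}(ii), here stated only for $c\in(0,\overline{c}_+)$, which is a nonempty range because the paragraph preceding this theorem (using Lemma \ref{mp}, Lemma \ref{H45} and Proposition \ref{lb}) already gives $\overline{c}_+\ge c^0_+>0$. Thus both conclusions follow.

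There is essentially no analytical obstacle: the result is a specialization of the theorems of Sections 3--4, and the only care needed is bookkeeping — matching the coefficient functions and confirming that (M) propagates correctly through Lemma \ref{M} (for (H1)--(H3)) and Lemma \ref{H45} (for (H4)--(H5)). The one point worth double-checking is that the evenness/oddness route to (H4)--(H5) is legitimate when $g_i\equiv 0$; since the zero function is simultaneously even and odd, Lemma \ref{H45} certainly applies, and this also confirms via Proposition \ref{lb} that $\overline{c}_+>0$, so that part (ii) is not vacuous.
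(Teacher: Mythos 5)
Your proposal is correct and follows essentially the same route as the paper: the paper derives this theorem directly from Theorem \ref{Qspreading} and Remark \ref{Re1}, using Lemma \ref{M} for (H1)--(H3) and Lemma \ref{mp}, Lemma \ref{H45}, Proposition \ref{lb} for (H4)--(H5) and for $\overline{c}_+\ge c^0_+>0$. One small correction: your first justification of (H4)--(H5) --- that ``every coefficient function of \eqref{dhmp2} is a constant, hence even'' --- is false, since $b_1=b_2=a(x)$ is non-constant under (M) and is not assumed even, so the evenness/oddness branch of Lemma \ref{H45} is not available; only your second, ``equivalent'' route is actually valid, namely that $L_iu=\partial_x(d_i\partial_x u)$ with constant $d_i$ satisfies $g_i=-d_i'\equiv 0$, which is the divergence-form branch of Lemma \ref{H45} and exactly what Remark \ref{Re1} invokes.
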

In view of Theorem \ref{MIN}, we have the following result on periodic traveling waves for system \eqref{dhmp}. 
\begin{theorem}
Let (M) hold.
Then for any $c\ge\overline{c}_+$, system \eqref{dhmp} has an L-periodic rightward traveling wave $(U(x-ct,x),V(x-ct,x))$ connecting $(u^*_1(x),0)$ to $(0,u^*_2(x))$ with the wave profile component $U(\xi,x)$  being continuous and non-increasing in $\xi$, and $V(\xi,x)$ being continuous and non-decreasing in $\xi$. While for any $c\in(0,\overline{c}_+)$, system \eqref{dhmp} admits no $L$-periodic rightward traveling wave connecting $(u^*_1(x),0)$ to $(0,u^*_2(x))$.
\end{theorem}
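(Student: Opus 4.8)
The plan is to obtain this statement as a direct consequence of Theorem~\ref{MIN} applied to the cooperative reformulation \eqref{dhmp2} of \eqref{dhmp}, so that the only substantive work is to check that (H1)--(H4) hold in this special case and then to transport the conclusion back through the change of variables.

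First I would observe that \eqref{dhmp2} is the instance of the general cooperative system \eqref{NModel} obtained by taking $L_iu=d_i\partial_{xx}u$, $b_i(x)\equiv a(x)$, $a_{11}\equiv 1$, $a_{12}\equiv c$, $a_{21}\equiv 1$, $a_{22}\equiv 1$. By Lemma~\ref{M}, condition (M) forces (H1), (H2) and (H3) to hold for \eqref{dhmp}, hence for \eqref{dhmp2}. For (H4), note that since each $d_i$ is a positive constant the diffusion operator is in divergence form, $L_iu=\frac{\partial}{\partial x}\bigl(d_i\frac{\partial u}{\partial x}\bigr)$, with $d_i\in C^{1+\nu}(\mathbb{R})$ and $g_i(x)\equiv 0=-d_i'(x)$; Lemma~\ref{H45} then yields (H4) (and also (H5), although only (H1)--(H4) are needed here). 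Recall also that $\overline{c}_+\ge c^0_+>0$ was already established, so the interval $(0,\overline{c}_+)$ is nonempty and the second half of the statement is not vacuous.

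With (H1)--(H4) verified, Theorem~\ref{MIN} applies to \eqref{dhmp2}: for each $c\ge\overline{c}_+$ there is an $L$-periodic rightward traveling wave $(\widetilde U(x-ct,x),\widetilde V(x-ct,x))$ connecting $\beta=(u_1^*(x),u_2^*(x))$ to $0$, with $\widetilde U(\xi,x)$ and $\widetilde V(\xi,x)$ continuous and non-increasing in $\xi$, and for $c<\overline{c}_+$ no such wave exists. I would then undo the substitution $v_1=u_1$, $v_2=u_2^*(x)-u_2$ that links \eqref{dhmp} with \eqref{dhmp2}: setting $U(\xi,x):=\widetilde U(\xi,x)$ and $V(\xi,x):=u_2^*(x)-\widetilde V(\xi,x)$ produces an $L$-periodic rightward traveling wave of \eqref{dhmp}; since $\widetilde U\to u_1^*$ and $\widetilde V\to u_2^*$ as $\xi\to-\infty$ while $\widetilde U,\widetilde V\to 0$ as $\xi\to+\infty$, the pair $(U,V)$ connects $(u_1^*(x),0)$ to $(0,u_2^*(x))$; moreover $U$ inherits continuity and monotone non-increase in $\xi$, whereas $V=u_2^*-\widetilde V$ is continuous and non-decreasing in $\xi$. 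For the non-existence clause, any $L$-periodic rightward traveling wave of \eqref{dhmp} connecting $(u_1^*(x),0)$ to $(0,u_2^*(x))$ with speed $c\in(0,\overline{c}_+)$ would, under the same substitution, yield an $L$-periodic rightward traveling wave of \eqref{dhmp2} connecting $\beta$ to $0$ with $c<\overline{c}_+$, contradicting Theorem~\ref{MIN}.

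There is no genuine analytic obstacle; the argument is essentially bookkeeping. The two points that require a little care are recognizing that the constant-coefficient Laplacian places \eqref{dhmp} in the self-adjoint/divergence-form branch of Lemma~\ref{H45}, which is what makes (H4) available, and tracking the orientation reversal in the second component when inverting the change of variables, so that a non-increasing $\widetilde V$ becomes a non-decreasing $V$ and the limiting states $\hat E_1=\beta$ and $\hat E_2=0$ correspond, respectively, to $(u_1^*(x),0)$ and $(0,u_2^*(x))$.
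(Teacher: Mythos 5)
Your proposal is correct and follows essentially the same route as the paper: the paper likewise obtains this theorem as a direct consequence of Theorem~\ref{MIN} applied to the cooperative system \eqref{dhmp2}, with (H1)--(H3) supplied by Lemma~\ref{M}, (H4) supplied by Lemma~\ref{H45} in the divergence-form/constant-coefficient case, and the monotonicity reversal in the second component coming from undoing the substitution $v_2=u_2^*(x)-u_2$.
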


It is not easy to verify conditions (D1) and (D2). However, motivated by \cite{SKT, SK,LLM}, we can formally compute  the lower bound $c^0_+$ in the case where  $$d_1=1, d_2>1, a(x)=\left\{\begin{array}{ll}
1, & ml< x<ml+l_1,\\
a<1, & ml-l_2\le x<ml,\quad  m\in\mathbb{Z},
\end{array}\right.$$ for system \eqref{dhmp} with $l=l_1+l_2$ and $\overline{a}=\frac{l_1+al_2}{l}>0$.
It is easy to see that 
$u^*_1(x)\approx a(x), u^*_2(x)\approx a(x)$, and hence, \eqref{eep0} becomes
\begin{align}\label{neep}
&  \lambda \psi\!=\!\psi''\!-\!2\mu\psi'\!+\!(\mu^2\!+\!(1-c))\psi,\quad ml<x<ml+l_1,\nonumber\\
& \lambda \psi\!=\!\psi''\!-\!2\mu\psi'\!+\!(\mu^2\!+\!a(1-c))\psi,\quad ml+l_1<x<(m+1)l.
\end{align}
The matching conditions are 
$$\lim_{x\rightarrow ml^-}\psi(x)=\lim_{x\rightarrow ml^+}\psi(x),\lim_{x\rightarrow (ml+l_1)^-}\psi(x)=\lim_{x\rightarrow (ml+l_1)^+}\psi(x),\quad m\in \mathbb{Z},$$
and
$$\lim_{x\rightarrow ml^-}\psi'(x)=\lim_{x\rightarrow ml^+}\psi'(x),\lim_{x\rightarrow (ml+l_1)^-}\psi'(x)=\lim_{x\rightarrow (ml+l_1)^+}\psi'(x),\quad m\in \mathbb{Z},$$
Set
\begin{eqnarray}
& &\phi(x)=A_1e^{\alpha_1x}+A_2e^{\alpha_2x}, \quad x\in[0,l_1],\\
& &\phi(x)=A_3e^{\beta_1(l-x)}+A_4e^{\beta_2(l-x)},\quad x\in[l_1,l],
\end{eqnarray}
where 
$\alpha_{1,2}=\mu\pm q_1$, $\beta_{1,2}=-\mu\pm q_2$, $q_1=\sqrt{\lambda-(1-c)}$, and $q_2=\sqrt{\lambda-a(1-c)}$.
Then the matching conditions yield the following linear relationship between the coefficients 
$$
\left(\begin{array}{cccc}
1 & 1&-1 &-1\\
e^{\alpha_1l_1}&e^{\alpha_2l_2}&-e^{\beta_1l_2}&-e^{\beta_2l_2}\\
q_1&-q_1 & q_2&-q_2\\
q_1e^{\alpha_1l_1}&-q_1e^{\alpha_2l_1} & q_2e^{\beta_1l_2}&-q_2e^{\beta_2l_2}
\end{array}\right)\left(\begin{array}{c}A_1\\A_2\\A_3\\A_4\end{array}\right)=0.
$$
Since we look for positive eigenfunctions, the determinant of the above matrix must be zero. Accordingly,
straightforward computations show that  
$$\cosh(\mu l)=\cosh(q_1l_1)\cosh(q_2l_2)+\frac{q^2_1+q^2_2}{2q_1q_2}\sinh(q_1l_1)\sinh(q_2l_2):= G(\lambda).$$
In view of $$\cosh^{-1}z=\log\{z+(z^2-1)^{1/2}\},\quad z>1,$$ we then have
$$\mu(\lambda)=\frac{1}{l}\log\{G(\lambda)+\sqrt{[G(\lambda)]^2-1}\}.$$
Let $\lambda_0$ be the solution of the following equation:
$$\frac{d\mu(\lambda)}{d\lambda}\frac{\lambda}{\mu(\lambda)}=1,$$
and $\mu_0=\mu(\lambda_0)$.
Thus, we obtain $c^0_+=\frac{\lambda_0}{\mu_0}$.

If $l\ll1$, by using $\cosh z\approx1+z^2/2$ and $\sinh z\approx z$, we get an approximation 
$$1+(\mu l)^2/2\approx(1+(q_1l_1)^2/2)(1+(q_2l_2)^2/2)+l_1l_2\frac{q^2_1+q^2_2}{2},$$
and hence,
$$c^0_+=\inf_{\mu>0}\frac{\lambda(\mu)}{\mu}\approx\inf_{\mu>0}\left\{\mu+\frac{(1-c)\overline{a}}{\mu}\right\}.$$
It follows that $c^0_+\approx2\sqrt{(1-c)\overline{a}}$,
$\mu_0\approx\sqrt{(1-c)\overline{a}}$, $\overline{a}=\frac{l_1+al_2}{l}>0$.

\section{Appendix}
In this section, we  extend the abstract results in \cite{FZ} and \cite{Liang2} on spreading speeds and traveling waves to the case of a periodic habitat. 

Let $\mathcal{C}$ be the set of all bounded and continuous functions
from $\mathbb{R}$ to $\mathbb{R}^m$ with $m\ge1$ and $\mathcal{C}_+=\{\phi\in\mathcal{C}:\phi(x)\geq0,\ \forall x\in \mathbb{R}\}$. Clearly, any vector in
$\mathbb{R}^m$ can be regarded as a function in $\mathcal{C}$. For
$u=(u_1,...,u_m), w=(w_m,...,w_m)\in \mathcal{C}$, we write $u\ge w
(u\gg w)$ provided $u_j(x)\ge w_j(x)(u_j(x)>w_j(x)), \forall 1\leq
j\leq m,\, x\in \mathbb{R}$, and $u>w$ provided $u\ge w$ but $u\neq
w$. Assume that $\beta$ is a  strongly positive $L$-periodic continuous function from $\mathbb{R}$ to $\mathbb{R}^m$. Set\begin{small}$$\mathcal{C}_{\beta}=\{u\in \mathcal{C}:\, 0\le u(x)\le \beta(x),\ \forall x\in \mathbb{R}\},\  \mathcal{C}^{per}_{\beta}=\{u\in \mathcal{C_\beta}:\,  u(x)=u(x+L),\ \forall x\in \mathbb{R}\}.$$ \end{small}
Let $X=C([0,L],\mathbb{R}^m)$ equipped with the maximum norm $|\cdot|_X$, $X_+=C([0,L],\mathbb{R}_+^m)$, $$X_{\beta}=\{u\in X:\ 0\le u(x)\le{\beta}(x),\  \forall x\in[0,L]\},\ \text{and} \  \overline{X}_{\beta}=\{u\in X_{\beta}: u(0)=u(L)\}.$$ Let $BC(\mathbb{R}, X)$ be the set of all continuous and bounded functions from $\mathbb{R}$ to $X$. Then we define 
\begin{small}$$\mathcal{X}=\{v\in BC(\mathbb{R},X):v(s)(L)=v(s+L)(0),\forall s\in \mathbb{R}\},  \mathcal{X}_+=\{v\in \mathcal{X}:v(s)\in X_+,\forall s\in \mathbb{R}\}$$\end{small} 
and  
$$\mathcal{X}_{\beta}=\{v\in BC(\mathbb{R},X_{\beta}):v(s)(L)=v(s+L)(0),\forall s\in \mathbb{R}\}.$$Let $$\mathcal{K}_{\beta}:=\{v\in BC(L\mathbb{Z},X_{\beta}):v(i)(L)=v(i+L)(0),\forall i\in L\mathbb{Z}\}.$$ Clearly, any element in $\overline{X}_{\beta}$ can be regarded as a constant function in $\mathcal{X}_{\beta}$, that is, any element in $\mathcal{C}^{per}_\beta$ corresponds to a constant function in $\mathcal{X}_{\beta}$. We
equip $\mathcal{C}$ and $\mathcal{X}$ with the compact open topology, that is, $u_n\to
u$ in $\mathcal{C}$ or $\mathcal{X}$ means that the sequence of $u_n(s)$ converges to $u(s)$ in $\mathbb{R}^m$ or $X$ uniformly for $s$ in any compact set. We equip $\mathcal{C}$ and $\mathcal{X}$ with the norm $\|\cdot\|_\mathcal{C}$ and $\|\cdot\|_\mathcal{X}$, respectively, by \[\|u\|_{\mathcal{C}}=\sum\limits_{k=1}^{\infty}\frac{\max_{|x|\le k}|u(x)|}{2^k},\ \forall u\in\mathcal{C},\] 
where $|\cdot|$ denotes the usual norm in $\mathbb{R}^m$, and
\[\|u\|_{\mathcal{X}}=\sum\limits_{k=1}^{\infty}\frac{\max_{|x|\le k}|u(x)|_X}{2^k},\ \forall u\in\mathcal{X}.\]

Define a translation operator $\mathcal{T}_a$ by
$\mathcal{T}_a[u](x)=u(x-a)$ for any given $a\in L\mathbb{Z}$. 
Let $Q$ be a operator on $\mathcal{C}_{\beta}$, where $\beta\in Int(\mathcal{C}_+)$ is $L$-periodic. In order to use the
theory developed in \cite{FZ} and \cite{Liang2}, we need the following
assumptions on $Q$:
\begin{enumerate}
	\item[(A1)] $Q$ is $L$-periodic, that is, $\mathcal{T}_a[Q[u]]
	=Q[\mathcal{T}_a[u]],\quad  \forall u\in \mathcal{C}_{\beta},\,
	a\in L\mathbb{Z}$.
	\item[(A2)] $Q:\, \mathcal{C}_{\beta} \to \mathcal{C}_{\beta}$ is continuous
	with respect to the compact open topology.
	\item[(A3)] $Q:\, \mathcal{C}_{\beta} \to \mathcal{C}_{\beta}$ is monotone
	(order preserving) in the sense that $Q[u] \ge
	Q[w]$ whenever $u \ge w$.
	\item[(A4)] $Q$
	admits two $L$-periodic fixed points $0$ and $\beta$ in $\mathcal{C}_+$, and for any $z\in \mathcal{C}^{per}_{\beta}$ with $0\ll z\leq \beta$, there holds $\lim\limits_{n\rightarrow\infty }Q^n[z](x)=\beta(x)$ uniformly for $x\in \mathbb{R}$.
	\item[(A5)] $Q[\mathcal{C}_{\beta}]$ is precompact in $\mathcal{C}_{\beta}$ with respect to the compact open topology.
\end{enumerate}
Define a homeomorphsim $F:\mathcal{C}_\beta\rightarrow \mathcal{K}_{\beta}$ by $$F[\phi](i)(\theta)=\phi(i+\theta),\ i\in L\mathbb{Z},\ \theta\in[0,L],$$ and a sequence of operators $P:\mathcal{K}_{\beta}\rightarrow \mathcal{K}_{\beta}$, by
 \begin{equation}\label{P}
 P=F\circ Q\circ F^{-1}.
 \end{equation}
Next, we define $\tilde{P}: \mathcal{X}\rightarrow \mathcal{X}$ by
\begin{equation}\label{P2}
\tilde{P}[v](s):=P[v(\cdot+s)](0),\quad \forall v\in\mathcal{X},\ s\in\mathbb{R}.
\end{equation}
We further claim that 
\begin{equation}\label{hatQ}
\tilde{P}[v](s)(\theta)=Q[v_s](\theta),\quad \forall v\in\mathcal{X},\ s\in\mathbb{R},\ \theta\in[0,L],
\end{equation}
where $v_s\in\mathcal{C}$ is defined by  
$$v_s(x)=v(s+n_x)(\theta_x),\quad \forall x=n_x+\theta_x\in\mathbb{R},\ n_x=L\left[\frac{x}{L}\right],\ \theta_x\in[0,L).$$
Indeed, since $$F[\phi](i)(\theta)=\phi(i+\theta),\quad F^{-1}[\psi](x)=\psi(n_x)(\theta_x),$$
it then follows that 
\begin{eqnarray*}
\tilde{P}[v](s)&=P[v(\cdot+s)](0)=FQF^{-1}[v(\cdot+s)](0)\\
&=F[Q[v(n_\cdot+s)(\theta_\cdot)]](0)=F[Q(v_s)](0),
\end{eqnarray*}
and hence,
\[\tilde{P}[v](s)(\theta)=F[Q(v_s)](0)(\theta)=Q[v_s](\theta).\]

Let $r\in Int(X_+)$ with $r(0)=r(L)$. In order to apply the results in \cite{FZ} to $\tilde{P}$, we need to verify that $
\tilde{P}$ satisfies the following assumptions:
\begin{enumerate}
	\item[(B1)]$\mathcal{T}_a[\tilde P[u]]
	=\tilde P[\mathcal{T}_a[u]],\quad  \forall u\in \mathcal{X}_{r},\,
	a\in \mathbb{R}$.
	\item[(B2)] $\tilde{P}:\, \mathcal{X}_{r} \to \mathcal{X}_{r}$ is continuous
	with respect to the compact open topology.

	\item[(B3)] $\tilde{P}:\, \mathcal{X}_{r} \to \mathcal{X}_{r}$ is monotone
	(order preserving) in the sense that $\tilde{P}[u] \ge
	\tilde{P}[w]$ whenever $u \ge w$.
	\item[(B4)] $\tilde{P}$
	admits two fixed points $0$ and $r$ in $\overline X_{r}$, and for any $z\in \overline{X}_{r}$ with $0\ll z\leq r$, 
	there holds $\lim\limits_{n\rightarrow\infty }{\tilde{P}}^n[z]=r$.
	\item[(B5)] There exists $k\in[0,1)$ such that for any $\mathcal{U}\subset\mathcal{X}_r$, $\alpha(\tilde P[\mathcal{U}](0))\le k\alpha(\mathcal{U}(0))$, where $\alpha$ denotes the Kuratowski measure of nonconmpactness in $\mathcal{X}_r$.
\end{enumerate} 
\begin{proposition}\label{HQASS}Let $\beta\in Int(\mathcal{C}_+)$ be $L$-periodic. Assume that $Q:\mathcal{C}_{\beta} \to \mathcal{C}_{\beta}$ satisfies assumptions (A1)--(A5). Then $\tilde{P}$: $\mathcal{X}_{ \beta}\rightarrow\mathcal{X}_{\beta}$  satisfies assumptions (B1)--(B5).
\end{proposition}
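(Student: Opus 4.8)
The plan is to verify each of the five assumptions (B1)--(B5) for $\tilde P$ by transferring the corresponding property of $Q$ through the identity \eqref{hatQ}, namely $\tilde P[v](s)(\theta)=Q[v_s](\theta)$, and the relation between the translation $\mathcal T_a$ on $\mathcal X$ and on $\mathcal C$. First I would establish the key bookkeeping fact that for $v\in\mathcal X_\beta$ and $a\in\mathbb R$, the slice of the translate satisfies $(\mathcal T_a v)_s = \mathcal T_{a}^{\mathcal C}\,(\text{shifted slice of }v)$ up to the integer/fractional decomposition $x = n_x+\theta_x$; more precisely, translating $v$ in $\mathcal X$ by $a$ corresponds, at the level of slices, to translating the $\mathcal C$-function by $a$ (splitting $a$ into its $L\mathbb Z$-part and remainder and using (A1) together with periodicity of $\beta$). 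Granting this, (B1) is immediate from (A1): $\mathcal T_a[\tilde P[v]](s)(\theta)=\tilde P[v](s-a)(\theta)=Q[v_{s-a}](\theta)=Q[(\mathcal T_a v)_s](\theta)=\tilde P[\mathcal T_a v](s)(\theta)$, where the middle equality uses the slice identity and $L$-periodicity of $Q$ for the integer part of $a$.

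Next, (B3) (monotonicity) follows directly: if $v\ge w$ in $\mathcal X_\beta$ then $v_s\ge w_s$ in $\mathcal C_\beta$ for every $s$, hence $Q[v_s]\ge Q[w_s]$ by (A3), i.e. $\tilde P[v]\ge\tilde P[w]$. For (B2) (continuity in the compact-open topology), I would note that $v_n\to v$ in $\mathcal X_\beta$ (uniformly on compact $s$-sets, in $X$) implies, for each fixed $s$, that the slices $(v_n)_s\to v_s$ uniformly on compact $x$-sets in $\mathcal C_\beta$ — indeed the slice over $x\in[-k,k]$ only sees finitely many values $v_n(s+j)$, $|j|$ bounded — so (A2) gives $Q[(v_n)_s]\to Q[v_s]$ uniformly on compacts; uniformity in $s$ over compact sets comes from the same finite-window observation. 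For (B4), the fixed points $0$ and $r=\beta$ correspond to the constant $\mathcal X$-elements whose slices are the $L$-periodic functions $0$ and $\beta$, which are fixed by $Q$ by (A4); and for $0\ll z\le r$ in $\overline X_\beta$, the slice $z_s$ is the corresponding $L$-periodic function in $\mathcal C^{per}_\beta$ with $0\ll z_s\le\beta$, so $Q^n[z_s]\to\beta$ uniformly by (A4), which unwinds to $\tilde P^n[z]\to r$.

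The main obstacle — and the only assumption requiring real work — is (B5), the $\alpha$-contraction estimate. Here I would use (A5): $Q[\mathcal C_\beta]$ is precompact in $\mathcal C_\beta$ for the compact-open topology. The point is that $\tilde P[\mathcal U](0)\subset X$ consists of functions $\theta\mapsto Q[v_0](\theta)$ for $v\in\mathcal U$, and $\{v_0 : v\in\mathcal U\}$ is a subset of $\mathcal C_\beta$; by (A5), $\{Q[v_0]:v\in\mathcal U\}$ lies in a precompact subset of $\mathcal C_\beta$, and its restriction to $[0,L]$ is then precompact in $X=C([0,L],\mathbb R^m)$. Hence $\alpha(\tilde P[\mathcal U](0))=0$, so (B5) holds with $k=0$. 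I would take care with the fact that $\tilde P$ does not literally act as $Q$ on a single slice but that the evaluation map $v\mapsto v_0$ sends $\mathcal X_\beta$ into $\mathcal C_\beta$ continuously, so images of $Q$ restricted to $[0,L]$ inherit compactness; the precompactness in the compact-open topology on $\mathcal C_\beta$ restricts to norm-precompactness on the compact parameter set $[0,L]$, which is exactly what $\alpha$ measures in $X$. With (B1)--(B5) in hand the proposition is proved.
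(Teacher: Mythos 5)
Your proposal is correct and follows essentially the same route as the paper: (B1) and (B3) by direct transfer through the slice identity \eqref{hatQ}, (B2) by compact-open continuity of the slicing, (B4) by identifying constant elements of $\overline X_\beta$ with $L$-periodic functions, and (B5) with $k=0$ by deducing precompactness of $\tilde P[\mathcal U](0)$ in $X$ from (A5) (the paper phrases this via equicontinuity and Arzel\`a--Ascoli, you via continuity of restriction to $[0,L]$, which amounts to the same thing).
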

\begin{proof}

For any $c\in \mathbb{R}$, let $u(\cdot)=v(\cdot+c), \forall v\in\mathcal{X}$. Then
	\begin{eqnarray*}T_{-c}\tilde{P}[v](s)&=&\tilde{P}[v](s+c)\\
		&=&Q[v_{s+c}]=Q[u_{s}]=\tilde{P}[u(\cdot)](s)\\
		&=&\tilde{P}[T_{-c}v](s), \quad \forall v\in\mathcal{X},\ s\in\mathbb{R},
	\end{eqnarray*}
	and hence, (B1) holds. (B2) can be verified by similar arguments to those in \cite[Lemma 2.1]{Liang}, and (B3) directly follows from (A3). Clearly, $0$ is the fixed point of $\tilde{P}$ since $Q(0)=0$. To verify (B4), we need to show that $\beta|_{[0,L]}$ is the fixed point of $\tilde{P}$. Note that $\beta(x)$ is a constant function in $\mathcal{X}$ with $x\in[0,L]$ we have 
		$$\beta_s(\cdot)=\beta(s+n_\cdot)(\theta_\cdot)=\beta(\theta_\cdot),\quad  \forall s \in \mathbb{R}.$$ Therefore, $\beta_s=\beta$ in $\mathcal{C}, \forall s\in\mathbb{R}$. Moreover,
		\begin{eqnarray*}
			\tilde{P}[\beta](s)(\theta)=Q[ \beta_s](\theta)=Q[\beta](\theta)=\beta(\theta),\quad \forall \theta\in[0,L].
		\end{eqnarray*}   
		This implies that $\tilde{P}[\beta]=\beta$ in $\mathcal{X}$. Thus, $(B4)$ follows from $(A4)$. 
		Now we prove (B5) holds. For any given $\mathcal{U}\subset\mathcal{X}_{\beta}$, it is easy to see that $\tilde P(\mathcal{U})(0)$ is uniformly bounded. By (A5), it follows for any  $\varepsilon>0$, there exists $\delta>0$ such that $$|Q(v)(x_1)-Q(v)(x_2)|<\varepsilon, \quad\forall v\in \mathcal{C}_\beta$$ provided that $x_1,x_2\in[0,L]$ with $|x_1-x_2|<\delta$.  So for any $v\in\mathcal{U}$,
		\[|\tilde P(v)(0)(\theta_1)-\tilde P(v)(0)(\theta_2)|=|Q(v_0)(\theta_1)-Q(v_0)(\theta_2)|<\varepsilon\]
		provided that $\theta_1,\theta_2\in[0,L]$ with $|\theta_1-\theta_2|<\delta$.
		This implies that $\tilde P(\mathcal{U})(0)$ is equicontinuous. By Arzel\`{a}--Ascoli theorem, it follows that $\tilde P(\mathcal{U})(0)$ is precompact in $X_{\hat{\beta}}$, and hence,  $\alpha(\tilde P(\mathcal{U})(0))=0$, this proves (B5) with $k=0$.
\end{proof}

Let $\omega\in \overline{X}_\beta $ with $0\ll\omega\ll \beta$. Choose $\phi \in\mathcal{X}_\beta$ such that the following properties hold:
\begin{enumerate}
	\item[(C1)]$\phi(s)$ is nonincreasing in $s$;
	\item[(C2)] $\phi(s)\equiv0$ for all $s\ge0$;
	\item[(C3)]$\phi(-\infty)=\omega$. 
\end{enumerate}
Let $c$  be a given real number. According to \cite{Wein82}, we define an operator $R_{c}$ by
\[R_c[a](s):=\max\{\phi(s),T_{-c}\tilde{P}[a](s)\},\]
and a sequence of functions $a_n(c;s)$ by the recursion:
$$a_0(c;s)=\phi(s),\quad a_{n+1}(c;s)=R_{c}[a_n(c;\cdot)](s).$$
As a consequence of  similar arguments to those in \cite[Lemmas 3.1--3.3]{FZ}, we have the following result.
\begin{lemma}
	The following statements are valid:
	\begin{enumerate}
		\item[(1)]For each $s\in \mathbb{R}$, $a_n(c,s)$ converges to $a(c;s)$ in $X$, where $a(c;s)$ is nonincreasing in both $c$ and $s$, and $a(c;\cdot)\in\mathcal{X}_{\beta}$. 
		\item[(2)]$a(c,-\infty)= \beta$ and $a(c,+\infty)$ existing in $X$ is a fixed point of $\tilde{P}$. 
	\end{enumerate}
\end{lemma}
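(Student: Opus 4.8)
The plan is to follow the monotone iteration scheme of Weinberger \cite{Wein82} and Fang--Zhao \cite{FZ}, carrying along the extra phase variable $\theta\in[0,L]$ that is intrinsic to the periodic habitat; write $a_n=a_n(c;\cdot)$. Since $R_c[a]\ge\phi$ always, $a_1=R_c[\phi]\ge\phi=a_0$, and since $\tilde P$ is monotone (B3) so is $R_c$, so induction gives $a_n\le a_{n+1}$ for all $n$; using $\tilde P[\beta]=\beta$ (B4) and $\phi\le\beta$, the same induction gives $a_n\le\beta$. Fix $s\in\mathbb R$. For $n\ge1$, \eqref{hatQ} gives $a_n(c;s)=\max\{\phi(s),\,Q[(a_{n-1})_{s+c}]|_{[0,L]}\}$, so $a_n(c;s)$ lies in the image under $v\mapsto\max\{\phi(s),v\}$ of $\{\,Q[w]|_{[0,L]}:w\in\mathcal C_\beta\,\}$, which is precompact in $X$ by (A5) and the Arzel\`{a}--Ascoli theorem. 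A monotone sequence contained in a precompact subset of the ordered Banach space $X$ converges, so $a_n(c;s)\to a(c;s)$ in $X$, with $\phi(s)\le a(c;s)\le\beta(s)$; when only the weak-compactness hypothesis (B5) is at hand one instead invokes convergence of monotone $\alpha$-contracting iterations, exactly as in \cite{FZ}.

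\textbf{Monotonicity and the fixed-point identity.} By (C1), $a_0=\phi$ is nonincreasing in $s$; as $\tilde P$ is monotone and translation-invariant (B1),(B3) it sends functions nonincreasing in $s$ to such functions, and taking the maximum with the nonincreasing $\phi$ preserves this, so each $a_n$, and hence $a(c;\cdot)$, is nonincreasing in $s$. Likewise $a_0$ is independent of $c$, and if $c_1\le c_2$ with $a_n(c_1;\cdot)\ge a_n(c_2;\cdot)$, then, using the monotonicity of $\tilde P$ and the fact that $\sigma\mapsto\tilde P[a_n(c_2;\cdot)](\sigma)$ is nonincreasing, $\tilde P[a_n(c_1;\cdot)](s+c_1)\ge\tilde P[a_n(c_2;\cdot)](s+c_1)\ge\tilde P[a_n(c_2;\cdot)](s+c_2)$, so $a_{n+1}(c_1;s)\ge a_{n+1}(c_2;s)$; in the limit $a(c;s)$ is nonincreasing in $c$. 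Passing to the limit in $a_n=R_c[a_{n-1}]$ and using the continuity of $Q$ (A2) — convergence in $X$ is uniform in $\theta$, so $(a_{n-1})_{s+c}\to(a(c;\cdot))_{s+c}$ in $\mathcal C$ for each fixed $s$ — yields the identity $a(c;s)=\max\{\phi(s),\tilde P[a(c;\cdot)](s+c)\}$ for every $s$. Each $a_n$ satisfies the gluing condition $v(s)(L)=v(s+L)(0)$ (true for $\phi$, and preserved by $\tilde P$, by $\mathcal T_{-c}$ and by the maximum), hence so does $a(c;\cdot)$; together with $0\le a(c;\cdot)\le\beta$ this gives $a(c;\cdot)\in\mathcal X_\beta$, provided $s\mapsto a(c;s)$ is continuous, which I would establish as in \cite[Lemmas 3.1--3.3]{FZ}.

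\textbf{The limits at $\pm\infty$.} The function $a(c;\cdot)$ is nonincreasing in $s$ and bounded; for $s\ge0$, where $\phi\equiv0$, the fixed-point identity reads $a(c;s)=\tilde P[a(c;\cdot)](s+c)=Q[(a(c;\cdot))_{s+c}]|_{[0,L]}\in\{Q[w]|_{[0,L]}:w\in\mathcal C_\beta\}$, and $\phi$, extending continuously to a map $[-\infty,+\infty]\to X$, has relatively compact image, so the set $\{a(c;s):s\in\mathbb R\}$ is equicontinuous in $\theta$ and the limits $\bar a^{\pm}:=\lim_{s\to\pm\infty}a(c;s)$ exist in $X$; let $\tilde a^{\pm}\in\mathcal C^{per}_\beta$ be their $L$-periodic extensions. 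Letting $s\to+\infty$ in $a(c;s)=Q[(a(c;\cdot))_{s+c}]|_{[0,L]}$, with $(a(c;\cdot))_{s+c}\downarrow\tilde a^{+}$ in $\mathcal C$ and (A2), gives $\bar a^{+}=Q[\tilde a^{+}]|_{[0,L]}$; since $Q$ is $L$-periodic (A1), $Q[\tilde a^{+}]=\tilde a^{+}$, i.e.\ $a(c;+\infty)$ is a fixed point of $\tilde P$. Letting $s\to-\infty$ in $a(c;s)\ge Q[(a(c;\cdot))_{s+c}]|_{[0,L]}$, with $(a(c;\cdot))_{s+c}\uparrow\tilde a^{-}$, gives $\tilde a^{-}\ge Q[\tilde a^{-}]$, so $\{Q^n[\tilde a^{-}]\}_n$ is nonincreasing and $Q^n[\tilde a^{-}]\le\tilde a^{-}\le\beta$; but $\tilde a^{-}\ge\phi(-\infty)=\omega\gg0$, so (A4) forces $Q^n[\tilde a^{-}]\to\beta$, whence $\beta\le\tilde a^{-}\le\beta$, i.e.\ $a(c;-\infty)=\beta$.

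\textbf{Where the difficulty lies.} The genuinely delicate point — automatic in the spatially homogeneous case of \cite{FZ} only because there the pointwise values lie in $\mathbb R^m$ — is that here they lie in the infinite-dimensional space $X$, so the convergence $a_n(c;s)\to a(c;s)$ and, above all, the continuity of $s\mapsto a(c;s)$ required for $a(c;\cdot)\in\mathcal X_\beta$ must be extracted from the weak-compactness hypothesis (B5)/(A5) rather than from finite dimensionality; carrying this through while passing consistently between $\mathcal C_\beta$, $\mathcal K_\beta$ and $\mathcal X_\beta$ via $F$ and $\tilde P$ is the main obstacle. Everything else is the periodic-habitat analogue of \cite[Lemmas 3.1--3.3]{FZ}.
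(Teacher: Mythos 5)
Your argument is correct and is precisely the adaptation of Weinberger's monotone recursion as in \cite[Lemmas 3.1--3.3]{FZ} that the paper itself invokes (the paper gives no further details beyond that citation), with the compactness of $Q[\mathcal{C}_\beta]$ from (A5) supplying convergence of the monotone iterates in $X$ and the limits at $\pm\infty$, and (A4) forcing $a(c,-\infty)=\beta$. The one point you leave to the cited lemmas --- continuity of $s\mapsto a(c;s)$ needed for $a(c;\cdot)\in\mathcal{X}_\beta$ --- is exactly where the paper also defers, so your proposal matches the paper's approach.
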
 
Following \cite{WLL2002,FZ}, we define two numbers 
\begin{eqnarray}\label{defc}
c^*_+=\sup\{c:a(c,+\infty)=\beta\},\quad \overline c_+=\sup\{c:a(c,+\infty)>0\}
\end{eqnarray}
Clearly, $c^*_+\le \overline{c}_+$ due to the monotonicity of $a(c;\cdot)$ with respect to $c$. For each $t\ge 0$. Let $P_t$ and $\tilde{P}_t$ be defined as in \eqref{P} and \eqref{hatQ} with $Q=Q_t$, respectively. 
By \cite[Remark 3.2]{FZ}, we have the following result. 
\begin{theorem}\label{spreading}Let $\{Q_t\}_{t\ge0}$ be a continuous-time semifow on $\mathcal{C}_\beta$ with $Q_t[0]=0,Q_t[\beta]=\beta$ for all $t\ge0$ and  $\{\tilde{P}_t\}_{t\ge0}$ be defined as in \eqref{hatQ} for each $t\ge0$, and $c^*_+$ and $\overline{c}_+$ be denoted by \eqref{defc} with $\tilde{P}=\tilde{P}_1$. Suppose that $Q_t$ satisfies (A1)--(A5) for each $t>0$. Then the following statements are valid:
	
	\begin{enumerate}
		\item[(i)]If $\phi\in\mathcal{C}_{\beta}$, $0\le \phi\le \omega\ll \beta$ for some $\omega\in \mathcal{C}^{per}_{\beta}$, and $\phi(x)=0, \forall x\ge H$, for some $H\in \mathbb{R}$, then $\lim_{t\rightarrow\infty,x\ge ct}Q_t(\phi)=0$ for any $c>\overline{c}_+$.
		\item[(ii)]If $\phi\in\mathcal{C}_{\beta}$ and $\phi(x)\ge \sigma$, $\forall x\le K$, for some $\sigma\gg0$ and $K\in\mathbb{R}$, then $\lim_{t\rightarrow\infty,x\le ct}(Q_t(\phi)(x)-\beta(x))=0$ for any $c<c^*_+$.
	\end{enumerate}
\end{theorem}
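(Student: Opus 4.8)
\emph{Overview and first step.} The plan is to deduce the assertion from the discrete-time spreading dichotomy for the single map $\tilde P_1$ on $\mathcal X_\beta$, which is furnished by the theory of \cite{FZ}, and then to transport the conclusions from the auxiliary space $\mathcal X_\beta$ back to $\mathcal C_\beta$. The first thing to record is that $\{\tilde P_t\}_{t\ge0}$ is a continuous-time monotone semiflow on $\mathcal X_\beta$ each of whose members ($t>0$) satisfies (B1)--(B5); the latter is Proposition~\ref{HQASS} applied with $Q=Q_t$. For the semiflow identity one notes that $n_x\in L\mathbb Z$ forces $v_{r+n_x}=\mathcal T_{-n_x}[v_r]$, so (A1) for $Q_s$ gives $(\tilde P_s[v])_r=Q_s[v_r]$ in $\mathcal C_\beta$; hence $\tilde P_t\tilde P_s[v](r)(\theta)=Q_t[Q_s[v_r]](\theta)=Q_{t+s}[v_r](\theta)=\tilde P_{t+s}[v](r)(\theta)$, and strong continuity of $t\mapsto\tilde P_t[v]$ in the compact-open topology is inherited from that of $\{Q_t\}$.

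\emph{The dichotomy for $\tilde P_1$.} Using the sequence $a_n(c;\cdot)\to a(c;\cdot)$ from the Appendix lemma, the definition \eqref{defc} of $c^*_+$ and $\overline c_+$, and the reasoning of \cite[Lemmas~3.1--3.3 and Remark~3.2]{FZ} applied to $\tilde P_1$ together with the semiflow $\{\tilde P_t\}$, one obtains spreading statements \emph{in the variable $s\in\mathbb R$}: if $w\in\mathcal X_\beta$ is dominated by the constant element of $\mathcal X_\beta$ attached to some $\omega\in\mathcal C^{per}_\beta$ with $\omega\ll\beta$ (enlarged to be strongly positive if necessary) and $w(s)=0$ for $s$ large, then $\lim_{t\to\infty,\,s\ge ct}\tilde P_t[w](s)=0$ for every $c>\overline c_+$; and if $w\in\mathcal X_\beta$ satisfies $w(s)\ge\sigma$ for $s$ small with $\sigma\gg0$, then $\lim_{t\to\infty,\,s\le ct}\bigl(\tilde P_t[w](s)-\beta|_{[0,L]}\bigr)=0$ for every $c<c^*_+$, the last limit being promoted from ``bounded away from $0$'' to ``$\to\beta$'' through (B4). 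The essential ingredient borrowed from \cite[Remark~3.2]{FZ} here is that the numbers read off from $\tilde P_1$ control the whole semiflow, so that these limits may be taken through all real $t\to\infty$, not merely integers.

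\emph{Transport to $\mathcal C_\beta$.} Given $\phi\in\mathcal C_\beta$, put $\psi=F[\phi]\in\mathcal K_\beta$ and define $w\in\mathcal X_\beta$ by interpolating linearly in $s$ between lattice values, $w(kL+\rho)(\theta)=\bigl(1-\tfrac{\rho}{L}\bigr)\psi(kL)(\theta)+\tfrac{\rho}{L}\psi((k{+}1)L)(\theta)$ for $k\in\mathbb Z$, $\rho,\theta\in[0,L]$. Then $w$ restricts to $\psi$ on $L\mathbb Z$; the bound $0\le w(s)(\theta)\le\beta(\theta)$ persists because $\psi(kL)(\theta)=\phi(kL+\theta)\le\beta(kL+\theta)=\beta(\theta)$, and the gluing condition $w(s)(L)=w(s+L)(0)$ persists because $\psi(jL)(L)=\psi((j{+}1)L)(0)$. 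Since $n_x\in L\mathbb Z$ we get $w_{kL}(x)=\psi(kL+n_x)(\theta_x)=\phi(kL+x)$, i.e. $w_{kL}=\mathcal T_{-kL}\phi$, whence (A1) for $Q_t$ and \eqref{hatQ} give
\[\tilde P_t[w](kL)(\theta)=Q_t[\mathcal T_{-kL}\phi](\theta)=\mathcal T_{-kL}Q_t[\phi](\theta)=Q_t[\phi](kL+\theta),\qquad k\in\mathbb Z,\ \theta\in[0,L],\ t\ge0,\]
so the traces of $\tilde P_t[w]$ along $L\mathbb Z$ reconstruct $Q_t[\phi]$ on all of $\mathbb R$. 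For (i), the hypotheses on $\phi$ make $w$ dominated by the constant element attached to $\omega$ with $w(s)=0$ for $s\ge\lceil H/L\rceil L$, so the first dichotomy applies; choosing $c'\in(\overline c_+,c)$ and writing $x=kL+\theta_x$ with $kL\ge x-L\ge ct-L\ge c't$ for $t$ large yields $|Q_t[\phi](x)|=|\tilde P_t[w](kL)(\theta_x)|\le\sup_{s\ge c't}\|\tilde P_t[w](s)\|_X\to0$ uniformly for $x\ge ct$. For (ii), the hypotheses give $w(s)\ge\sigma$ for $s\le K'$ with $K'\in L\mathbb Z$; since every $x\le ct$ has $\lfloor x/L\rfloor L\le ct$, the second dichotomy gives $|Q_t[\phi](x)-\beta(x)|=|\tilde P_t[w](kL)(\theta_x)-\beta(\theta_x)|\le\sup_{s\le ct}\|\tilde P_t[w](s)-\beta|_{[0,L]}\|_X\to0$ uniformly for $x\le ct$.

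\emph{Main obstacle.} The delicate step is the transport just described: one has to manufacture a \emph{bona fide} element $w\in\mathcal X_\beta$ out of $\phi\in\mathcal C_\beta$ while reconciling that the constraint defining $\mathcal X_\beta$ refers to $\beta$ on $[0,L]$ whereas the one defining $\mathcal C_\beta$ refers to the $L$-periodic $\beta$ on $\mathbb R$, and one must check that the unavoidably non-canonical off-lattice interpolation still satisfies the gluing condition, remains front-like (no spurious mass ahead of $\lceil H/L\rceil L$ in (i), enough mass behind $K'$ in (ii)), and does not disturb the reconstruction identity — which it does not, precisely because the traces of $\tilde P_t[w]$ along $L\mathbb Z$ depend on $w$ only through $w|_{L\mathbb Z}$. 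The other nontrivial ingredient, namely that the spreading speeds obtained from $\tilde P_1$ are the correct ones for the continuous-time semiflow, is exactly what \cite[Remark~3.2]{FZ} provides.
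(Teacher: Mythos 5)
Your proposal is correct and follows essentially the same route as the paper's proof: invoke Proposition~\ref{HQASS} and \cite[Remark~3.2]{FZ} for the semiflow $\{\tilde P_t\}$ on $\mathcal X_\beta$, lift $\phi$ to an element of $\mathcal X_\beta$ by linear interpolation of its lattice traces, use (A1) to get the reconstruction identity $\tilde P_t[w](n_x)(\theta_x)=Q_t[\phi](x)$, and close with the $c'\in(\overline c_+,c)$ shift to absorb the $L$-offset between $x$ and $n_x$. (Your weight $\rho/L$ is in fact the correct normalization of the paper's interpolation formula, which writes $\theta_s$ in place of $\theta_s/L$.)
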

\begin{proof}
Since $\{Q_t\}_{t\ge0}$ is a continuous-time semifow on $\mathcal{C}_\beta$ with $Q_t(0)=0$ and $Q_t(\beta)=\beta$ for all $t\ge0$, it follows that  $\{\tilde{P}_t\}_{t\ge0}$ is a continuous-time semiflow on $\mathcal{X}_{\beta}$ with $\tilde{P}_t(0)=0$ and $\tilde{P}_t(\beta)=\beta$ for all $t\ge0$. By Proposition \ref{HQASS}, $\tilde{P}_t$ satisfies (B1)--(B5).
For any $\phi\in\mathcal{C}_{\beta}$, $0\le \phi\le \omega\ll \beta$ with $\omega\in \mathcal{C}^{per}_{\beta}$, let 
\[u(s)(\theta)=[\phi(n_s+L+\theta)-\phi(n_s+\theta)]\theta_s+\phi(n_s+\theta).\]
for $s\in\mathbb{R}$, $s=n_s+\theta_s$, $n_s=L\left[\displaystyle\frac{s}{L}\right],\ \theta_s\in[0,L)$, $\theta\in[0,L]$. Then $u\in \mathcal{X}_{{\beta}}$, $0\le u\le \omega\ll \beta$. 

To prove statement $(i)$, we suppose that there exists some $H\in\mathbb{R}$ such that $\phi(x)=0$, $x\ge H$ and $\phi(x)\not\equiv0$ (otherwise, it is trivial). Thus, $u(s)=0$, $s\ge H+L$. By \cite[Remark 3.2]{FZ},
it follows that $\lim_{t\rightarrow\infty,s\ge ct}\tilde{P}_t(u)(s)=0$ in $X$ for any $c>\overline{c}_+$. 
On the other hand, we have 
\begin{eqnarray*}\tilde{P}_t[u](n_x)(\theta_x)&= & Q_t[u_{n_x}](\theta_x)=Q_t[u(n_x+n_\cdot)(\theta_\cdot)](\theta_x),\\
&=&Q_t[\phi(n_x+\cdot)](\theta_x)=Q_t[\phi(\cdot)](x),\quad x\in\mathbb{R},
\end{eqnarray*}
and for $s\in L\mathbb{Z}$,   $\lim_{t\rightarrow\infty,s\ge ct}\tilde{P}_t(u)(s)=0$ in $X$ holds true for any $c>\overline{c}_+$. Choose a $c'\in(\overline c_+,c)$, we obtain
\begin{equation} \label{ieq1}
|Q_t[\phi](x)|\le|\tilde{P}_t[u](n_x)|_ X,\quad \forall x\ge ct, \ t\ge \frac {L}{c-c'},
\end{equation} and $n_x\ge ct-L\ge c't$. Letting $t\rightarrow \infty$ in \eqref{ieq1}, we have $\lim_{t\rightarrow\infty,x\ge ct}Q_t(\phi)=0$ for any $c>\overline{c}_+$.

By similar arguments to the above, we can show that statement (ii) is also valid.
\end{proof}
In view of the above theorem, we may regard $\overline c_+$ and $c^*_+$, respectively, as the fastest and slowest rightward spreading speeds for $\{Q_t\}_{t\ge0}$ on $\mathcal{C}_\beta$. If $\overline c_+=c^*_+$, then we say that this system admits a single rightward spreading speed.

Next, we address the existence and non-existence of traveling waves in a periodic habitat for the continuous-time semiflow $\{Q_t\}_{t\ge0}$. Given a continuous-time semiflow $\{Q_t\}_{t\ge0}$ on $\mathcal{C}_\beta$, we say that $V(x-ct,x)$ is an $L$-periodic rightward traveling wave of $\{Q_t\}_{t\ge0}$ if $V(\cdot+a,\cdot)\in \mathcal{C}_\beta$,\ $\forall a\in \mathbb{R}$, $Q_t[U](x)=V(x-ct,x)$, $\forall t\ge0$, and $V(\xi,x)$ is an $L$-periodic function in $x$ for any fixed $\xi\in\mathbb{R}$, where $U(x):=V(x,x)$. Moreover, we say that $V(\xi,x)$ connects $\beta$ to $0$ if $\lim_{\xi\rightarrow -\infty}|V(\xi,x)-\beta(x)|=0$ and $\lim_{\xi\rightarrow +\infty}|V(\xi,x)|=0$ uniformly for $x\in\mathbb{R}$.

Since we have only shown the weak compactness (B5) for $\tilde{P}_t$, we cannot directly apply \cite[Theorem 4.1]{FZ} to $\{\tilde{P}_t\}_{t\ge0}$ on $\mathcal{X}_{\beta}$. However, $\{P_t\}_{t\ge0}$ on $\mathcal{K}_{\beta}$ has the compactness because any element in $\mathcal{K}_{\beta}$ is defined on the discrete domain. 
  Following the proof of Case 1 in \cite[Theorem 4.2]{Liang2} and the argument in \cite[Theorem 3.1]{FZ}, we obtain the existence and non-existence of traveling waves for the discrete-time dynamical system $\{P_1^n\}$ on $\mathcal{K}_{\beta}$. Thus, the existence and non-existence of traveling waves for the continuous-time dynamical system $\{P_t\}_{t\ge0}$ on $\mathcal{K}_{\beta}$ follows from the arguments in \cite[Theorem 4.4]{Liang2}. By similar arguments to those in \cite[Theorem 5.3]{Liang2}, we can extend \cite[Theorem 4.1]{FZ} to the case of a periodic habitat so that the following result holds true. 

\begin{theorem}\label{Tw}
Let $\{Q_t\}_{t\ge0}$ be a continuous-time semifow on $\mathcal{C}_\beta$ with $Q_t[0]=0,Q_t[\beta]=\beta$ for all $t\ge0$, $\{\tilde{P}_t\}_{t\ge0}$ be defined as in \eqref{hatQ}, and $c^*_+$ and $\overline{c}_+$  be denoted by \eqref{defc} with $\tilde{P}=\tilde{P}_1$. Suppose that $Q_t$ satisfies (A1)--(A5) for each $t>0$. Then the following statements are valid:
	\begin{enumerate}
	\item[(1)] For any $c\ge c^*_+$, there is an $L$-periodic  traveling wave $W(x-ct,x)$ connecting $\beta$ to some equilibrium $\beta_1\in C^{per}_\beta\backslash\{\beta\}$ with $W(\xi,x )$ be continuous and nonincreasing in $\xi\in \mathbb{R}$.
	\item[(2)]If, in addition, $0$ is an isolated equilibrium of $\{Q_t\}_{t\ge0}$ in $\mathcal{C}^{per}_\beta$, then for any $c\ge\overline{c}_+$ either of the following holds true:\begin{enumerate}
	 \item[(i)] there exists an $L$-periodic traveling wave $W(x-ct,x)$ connecting $\beta$ to $0$ with $W(\xi,x )$ be continuous and nonincreasing in $\xi\in \mathbb{R}$.
		 \item[(ii)]$\{Q_t\}_{t\ge0}$ has two ordered equilibria $\alpha_1$,$\alpha_2 \in C^{per}_\beta\backslash\{0,\beta\}$ such that there exist an $L$-periodic traveling wave $W_1(x-ct,x)$ connecting $\alpha_1$ and $0$ and an $L$-periodic traveling wave $W_2(x-ct,x)$ connecting $\beta$ and $\alpha_2$ with $W_i(\xi,x ), i=1,2$ be continuous and nonincreasing in $\xi\in \mathbb{R}$.
		\end{enumerate}
	\item[(3)] For any $c< c^*_+$, there is no $L$-periodic traveling wave connecting $\beta$, and for any $c<\overline{c}_+$, there is no $L$-periodic traveling wave connecting $\beta$ to $0$.  
	\end{enumerate}
\end{theorem}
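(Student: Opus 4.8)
The strategy is to circumvent the absence of compactness for $\tilde{P}_t$ on $\mathcal{X}_\beta$ by first working on the discrete spatial lattice $L\mathbb{Z}$, where full compactness is restored, and only afterwards transferring the conclusions back to $\mathcal{C}_\beta$. Concretely, I would pass from $\{Q_t\}_{t\ge0}$ on $\mathcal{C}_\beta$ to $\{P_t\}_{t\ge0}$ on $\mathcal{K}_\beta$ via the homeomorphism $F$ of \eqref{P}. Since every element of $\mathcal{K}_\beta$ is a bounded function on the discrete set $L\mathbb{Z}$ taking values in $X_\beta$, and by (A5) the set $X_\beta$ consists of equicontinuous (hence precompact) functions on $[0,L]$, assumption (A5) upgrades to genuine compactness of $P_t$ ($t>0$) in the compact open topology on $\mathcal{K}_\beta$ — precisely what fails for $\tilde{P}_t$ on $\mathcal{X}_\beta$, where only the weak compactness (B5) is available.

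With this compactness in hand I would first treat the discrete-time system $\{P_1^n\}$ on $\mathcal{K}_\beta$. Running the recursion $a_n(c;\cdot)$ of (C1)--(C3) as in the preceding lemma, for each $c$ one obtains a nonincreasing limit profile $a(c;\cdot)\in\mathcal{X}_\beta$ with $a(c;-\infty)=\beta$ and $a(c;+\infty)$ a fixed point of $\tilde P_1$; pulling back through $F$, this fixed point corresponds to an $L$-periodic equilibrium of $Q_1$. For $c>c^*_+$ the definition of $c^*_+$ in \eqref{defc} forces $a(c;+\infty)=:\beta_1\ne\beta$, which yields an $L$-periodic traveling wave of $\{P_1^n\}$ connecting $\beta$ to $\beta_1$; the critical case $c=c^*_+$ is handled by taking $c_k\downarrow c^*_+$, normalizing the monotone profiles (prescribing an interior value at $\xi=0$), applying Helly's selection theorem, and upgrading pointwise convergence to convergence in $X$ via the compactness of $P_t$, exactly as in Case~1 of \cite[Theorem 4.2]{Liang2}; continuity and monotonicity of the profile follow from the same references. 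Statement~(1) for $\{Q_t\}_{t\ge0}$ is then obtained by first lifting the $P_1$-wave to a wave of $\{P_t\}_{t\ge0}$ on $\mathcal{K}_\beta$ as in \cite[Theorem 4.4]{Liang2} (using the semiflow property together with monotonicity to show $P_1$-invariance implies $P_t$-invariance for all $t$), and then transferring back to $\mathcal{C}_\beta$ through $F^{-1}$, where the relation $\tilde P_t[v](s)(\theta)=Q_t[v_s](\theta)$ of \eqref{hatQ} identifies the profile $V(\xi,x)$, shows it is $L$-periodic in $x$, and transmits the connecting behaviour.

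For statement~(2), the hypothesis that $0$ is isolated in $\mathcal{C}^{per}_\beta$ lets me invoke the dichotomy of \cite[Theorem 4.1]{FZ}, adapted to the periodic habitat as in \cite[Theorem 5.3]{Liang2}: running the recursion from $\phi$ (which pins the value $\beta$ at $-\infty$) together with a dual recursion pinning the value $0$ at $+\infty$, the profile produced at a speed $c\ge\overline c_+$ either connects $\beta$ directly to $0$, giving case~(i), or passes through an intermediate periodic equilibrium, in which case its two monotone pieces furnish the waves connecting $\alpha_1$ to $0$ and $\beta$ to $\alpha_2$ of case~(ii). The non-existence assertions in~(3) I would prove by comparison with the spreading result Theorem~\ref{spreading}: a traveling wave connecting $\beta$ with speed $c<c^*_+$, being nonincreasing and bounded away from $\beta$ on a right half-line, would be dominated from below by a shift of an initial datum of the type in Theorem~\ref{spreading}(ii), whose orbit converges to $\beta$ uniformly on $\{x\le c't\}$ for any $c'<c^*_+$, contradicting the wave being an entire solution; likewise a wave connecting $\beta$ to $0$ with $c<\overline c_+$ contradicts the fastest-spreading estimate underlying Theorem~\ref{spreading}, exactly as in \cite[Theorem 4.1]{FZ}. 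The main obstacle is precisely this compactness gap: the Fang--Zhao machinery presumes compactness of the shifted semiflow on $\mathcal{X}_\beta$, so the entire argument must be routed through $\mathcal{K}_\beta$ on the lattice $L\mathbb{Z}$ and then pushed back; keeping track of the periodicity in $x$ of the wave profiles and of the intermediate equilibria through these identifications, together with the discrete-to-continuous-time lift, is where the care is needed.
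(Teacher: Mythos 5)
Your proposal is correct and follows essentially the same route as the paper: both circumvent the mere weak compactness (B5) of $\tilde P_t$ by transferring the problem via $F$ to the lattice space $\mathcal{K}_\beta$, where (A5) yields genuine compactness, proving existence and non-existence for the discrete-time system $\{P_1^n\}$ as in Case~1 of \cite[Theorem 4.2]{Liang2} and \cite[Theorem 3.1]{FZ}, lifting to continuous time as in \cite[Theorem 4.4]{Liang2}, and adapting the dichotomy of \cite[Theorem 4.1]{FZ} to the periodic habitat as in \cite[Theorem 5.3]{Liang2}. The paper's own proof is precisely this citation-based sketch, so no substantive discrepancy arises.
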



\begin{thebibliography}{99}
\bibitem{Hamel1}H. Berestycki, F. Hamel and N. Nadirashvili, The speed of propagation for KPP type problems: I. Periodic framework, {\it J. Eur. Math. Soc.}, 7(2005), 173--213.
\bibitem{Hamel2}H. Berestycki, F. Hamel and L. Roques, Analysis of the periodically fragmented environment model: I-Species persistence, {\it J. Math. Biol.}, 51(2005), 75--113.
\bibitem{Hamel3}H. Berestycki, F. Hamel and L. Roques, Analysis of the periodically fragmented environment model: II-Biological invasions and pulsating traveling fronts, {\it J. Math. Pures Appl.}, 84(2005), 1101--1146.
\bibitem{CDM}J. Coville, J. D\'{a}vila and S. Mart\'{i}nez, Pulsating fronts for nonlocal dispersion and KPP nonlinearity, {\it Ann. Inst. H. Poincar\'{e} Anal. Non
Lin\'{e}aire}, 30(2013), 179--223.
\bibitem{DHMP}J. Dockery, V. Hutson, K. Mischaikow and M. Pernarowski, The evolution of slow dispersal rates: a
reaction diffusion model, {\it J. Math. Biol.}, 37(1998), 61--83.
\bibitem{FZ} J. Fang and X.-Q. Zhao, 	Traveling waves for monotone semiflows with weak compactness, {\it SIAM J. Math. Anal.}, in press, 2014.
\bibitem{Fri}M. I. Friedlin, On wavefront propagation in periodic media, stochastic analysis and applications, in ``Adv. Probab. Related Topics, 7'', Dekker, New York, 1984, 147--166.
\bibitem{GF}J. G\"{a}rtner and M. I. Freidlin, The propagation of concentration waves in periodic and random
media, {\it Soviet Math. Dokl.}, 20(1979), 1282--1286.
\bibitem{GL} J.-S. Guo and X. Liang, The minimal speed of traveling fronts for the Lotka–Volterra competition system, {\it J. Dynam. Differential Equations}, 23(2011), 353--363.
\bibitem{Hess}P. Hess, {\it Periodic-Parabolic Boundary Value Problems and Positivity}, Pitman Search Notes
in Mathematics Series, Vol. 247, Longman Scientific Technical, Harlow, UK, 1991.
\bibitem{Hess2}P. Hess and A. C. Lazer, On an abstract competition model and applications, {\it Nonlinear Analysis T.M.A.}, 16(1991), 917--940.
\bibitem{Hos}Y. Hosono, The minimal speed of traveling fronts for a diffusive Lotka-Volterra competition model,
{\it Bulletin of Math. Biology}, 60(1998), 435--448.
\bibitem{Hsu}S. B. Hsu, H. L. Smith and P. Waltman, Competitive exclusion and coexistence
for competitive systems on ordered Banach spaces, {\it Trans. Amer. Math. Soc.},
348(1996), 4083--4094.
\bibitem{Huang}W. Huang, Problem on minimum wave speed for a Lotka–Volterra reaction-diffusion competition model, {\it J. Dynam. Differential Equations}, 22(2010), 285--297.
\bibitem{Huang2}W. Huang and M. Han, Non-linear determinacy of minimum wave speed for a Lotka-Volterra competition model, {\it J. Differential Equations}, 251(2011), 1549--1561.
\bibitem{Kan2}Y. Kan-on, Fisher wave fronts for the Lotka-Volterra competition model with diffusion, {\it Nonlinear Analysis}, TMA 28(1997), 145--164.


\bibitem{KRS} L. Kong, N. Rawal and W. Shen, Spreading speeds and linear determinacy for two species
competition systems with nonlocal dispersal in periodic habitats, preprint,
http://arxiv.org/pdf/1410.0317v1.pdf, 2014.


\bibitem{LN}K.-Y. Lam and W.-M. Ni, Uniqueness and complete dynamics of the Lotka-Volterra competition
diffusion system, {\it SIAM J. Appl. Math.}, 72(2012),  1695--1712.
\bibitem{Lewis}M. A. Lewis, B. Li, and H. F. Weinberger, Spreading speeds and linear determinacy for two-species competition models, {\it J. Math. Biol.}, 45(2002), 219--233.
\bibitem{Liang}X. Liang and X.-Q. Zhao,
Asymptotic speeds of spread and traveling waves for monotone
semiflows with applications, {\it Communications on Pure and Applied
Math.}, 60(2007), 1--40.

\bibitem{Liang2}X. Liang and X.-Q. Zhao, Spreading speeds and traveling waves for abstract monostable evolution systems, {\it J. Funt. Anal.}, 259(2010), 857--903.
\bibitem{Martin}R. H. Martin, Nonlinear Operators and Differential Equations in Banach Spaces,
Wiley-Interscience, New York, 1976.
\bibitem{LLM}F. Lutscher, M. A. Lewis, and E. McCauley, Effects of heterogeneity on spread and persistence in rivers, {\it Bull. Math. Biol.}, 68(2006), 2129--2160.
\bibitem{LML}F. Lutscher, E. McCauley, and M. A. Lewis, Spatial patterns and coexistence mechanisms in systems with unidirectional flow, {\it Theoretical Population Biol.}, 71(2007), 267--277.
\bibitem{Shen1}W. Shen and A. Zhang, Spreading speeds for monostable equations with
nonlocal dispersal in space periodic habitats, {\it J. Differential Equations},
249(2010), 747--795.
\bibitem{Shen2}W. Shen and A. Zhang, Traveling wave solutions of spatially periodic nonlocal monostable equations, {\it Comm. Appl. Nonlinear Anal.}, 19(2012), 73--101.
\bibitem{SK}N. Shigesada, K. Kawasaki, {\it Biological invasions: theory and practice}, Oxford Series
in Ecology and Evolution, Oxford : Oxford University Press, 1997.
\bibitem{SKT}N. Shigesada, K. Kawasaki and E. Teramoto, Traveling periodic waves in heterogeneous environments, {\it Theoretical Population Biol.}, 30(1986), 143--160.
\bibitem{H}H. L. Smith, {\it Monotone Dynamical Systems: An Introduction to the Theory of Competitive
 and Cooperative Systems}, Mathematical Surveys and Monographs, Vol. 41, American Mathematical Society, Providence, RI (1995).

\bibitem{Thieme}H. R. Thieme, Spectral bound and reproduction number for infinite
dimensional population structure and time heterogeneity, {\it SIAM J. Appl.
Math.}, 70(2009), 188--211.
\bibitem{Wein82} H. F. Weinberger,  Long-time behavior of a class of biological
models, {\it SIAM J. Math. Anal.}, 13(1982), 353--396.
\bibitem{Wein02} H. F. Weinberger, On spreading speeds and traveling waves for growth and migration models in a periodic habitat, {\it J. Math. Biol.}, 45(2002), 511--548; {\it J. Math. Biol.}, 46 (2003), 190 (Erratum).
\bibitem{WLL2002}
H. F. Weinberger, M. A. Lewis, and B. Li, Analysis of linear
determinacy for spread in cooperative models, {\it J.
Math. Biol.}, 45(2002), 183--218.
\bibitem{Weng} P. Weng and X.-Q. Zhao, Spatial dynamics of a nonlocal and delayed population model in a periodic habitat, {\it Discrete Contin. Dyn. Syst. Ser. A}, 29(2011), 343--366.
\bibitem{Xin}J. Xin, Front propagation in heterogeneous media, {\it SIAM Rev.}, 42 (2000), 161--230.
\bibitem{Zhaobook} X.-Q. Zhao, {\it Dynamical Systems in Population Biology},
 Springer-Verlag, New York, 2003.
\end{thebibliography}
\end{document}